\newtheorem{thm}{Theorem}[section]
\newtheorem{prop}[thm]{Proposition}
\newtheorem{cor}[thm]{Corollary}
\newtheorem{lem}[thm]{Lemma}
\theoremstyle{definition}
\newtheorem{rem}[thm]{Remark}
\newcommand{\ra}{\rightarrow}
\newcommand{\bk}{\backslash}
\newcommand{\mc}{\mathcal}
\newcommand{\mf}{\mathfrak}
\newcommand{\mb}{\mathbb}
\newcommand{\sg}{\sigma}
\renewcommand{\ss}{\substack}
\newcommand{\e}{\varepsilon}
\renewcommand{\bar}{\overline}
\title[Sign Changes and Norm Forms]{Sign changes of Fourier coefficients of cusp forms at norm form arguments}
\author{Alexander P. Mangerel}
\address{Department of Mathematical Sciences, Durham University, Stockton Road, Durham, DH1 3LE, UK}
\email{smangerel@gmail.com}
\begin{document}

\begin{abstract}
Let $f$ be a non-CM Hecke eigencusp form of level 1 and fixed weight, and let $\{\lambda_f(n)\}_n$ be its sequence of normalized Fourier coefficients. We show that if $K/ \mb{Q}$ is any number field, and $\mc{N}_K$ denotes the collection of integers representable as norms of integral ideals of $K$, then a positive proportion of the positive integers $n \in \mc{N}_K$ yield a sign change for the sequence $\{\lambda_f(n)\}_{n \in \mc{N}_K}$. More precisely, for a positive proportion of $n \in \mc{N}_K \cap [1,X]$ we have $\lambda_f(n)\lambda_f(n') < 0$ where $n'$ is the first element of $\mc{N}_K$ greater than $n$ for which $\lambda_f(n') \neq 0$. \\
For example, for $K = \mb{Q}(i)$ and $\mc{N}_K = \{m^2+n^2 : m,n \in \mb{Z}\}$ the set of sums of two squares, we obtain $\gg_f X/\sqrt{\log X}$ such sign changes, which is best possible (up to the implicit constant) and improves upon work of Banerjee and Pandey. Our proof relies on recent work of Matom\"{a}ki and Radziwi\l\l{} on sparsely-supported multiplicative functions, together with some technical refinements of their results due to the author. \\
In a related vein, we also consider the question of sign changes along shifted sums of two squares, for which multiplicative techniques do not directly apply. Using estimates for shifted convolution sums among other techniques, we establish that for any fixed $a \neq 0$ there are $\gg_{f,\e} X^{1/2-\e}$ sign changes for $\lambda_f$ along the sequence of integers of the form $a + m^2 + n^2 \leq X$.
\end{abstract}

\maketitle

\section{Introduction}
The study of sign patterns of real-valued multiplicative functions at consecutive integers
has received a lot of attention in recent years, as a means of investigating the apparently random behaviour of multiplicative functions in interactions with additive patterns of integers. 
Of classical, as well as modern, interest is the particular collection of multiplicative functions that arise from the sequence of Fourier coefficients of a normalized Hecke eigencusp form $f$ of some weight and level. Here, we study the case of forms with level $1$ and without complex multiplication. In the sequel, for such a cusp form $f$ of weight $k$ and level 1 we write
$$
f(z) := \sum_{n \geq 1} \lambda_f(n)n^{\tfrac{k-1}{2}} e(nz), \quad \text{Im}(z) > 0,
$$
where, as usual $e(z) := e^{2\pi i z}$ for $z \in \mb{C}$. In this case the sequence $\{\lambda_f(n)\}_n$ is real, and it is natural to study its \emph{sign changes}, i.e., the set of $n$ for which $\lambda_f(n)\lambda_f(n+1) < 0$. \\
A number of works (see e.g., \cite{KLSW}, \cite{Lam}) have explored the relationship between the distribution of signs and sign changes of Fourier coefficients of cusp forms and the least quadratic non-residue problem modulo primes $p$. Moreover, in the spirit of exploring the mass distribution of Hecke-Maass cusp forms in the weight aspect, Ghosh and Sarnak \cite{GhSa} exhibited a relationship between the distribution of ``real zeros'' of such forms and counts for sign changes of their coefficients.\\
Improving on work of Lau and Wu \cite{LauWu}, Matom\"{a}ki and Radziwi\l\l{} \cite{MRCusp} obtained the optimal result that there are $\gg_f X$ sign changes for the sequence $\{\lambda_f(n)\}_{n\leq X}$. 
Shortly thereafter, they managed to reprove this result in a much more general context introduced in the breakthrough work \cite{MR} on general bounded multiplicative functions $g$. This new proof is based on relating short and long partial sums of $\text{sign}(g(n))$ (using the convention $\text{sign}(0) = 0$), discovering that sufficient cancellation in the long partial sums of $g$ imply that in \emph{typical} short intervals of bounded but large length $g$ must change sign at least once. \\
More generally, given a set $\mc{S} \subset \mb{N}$ and a map $g: \mb{N} \ra \mb{R}$, by a \emph{sign change of $g$ on $\mc{S}$} we mean a pair of elements $n < n'$ of $\mc{S}$ such that
$g(n)g(n') < 0$, and such that any $n < m < n'$ belonging to $\mc{S}$ satisfies $g(m) = 0$. 
One can naturally ask what, if anything, can be said about the sign changes that arise along \emph{sparse} subsequences of positive integers of arithmetic interest. As an example, in \cite[Thm. 5]{Mur}, M.R. Murty unconditionally estimates the number of sign changes among the prime values $\{\lambda_f(p)\}_p$. Using a Hoheisel-type argument, based on zero-density estimates for the corresponding Hecke $L$-function, he obtains cancellation in the short interval prime sums
$$
\sum_{x < n \leq x+h} \lambda_f(p)\log p,
$$
for $h < x^{1-\delta}$ and some small $\delta = \delta(f) > 0$, enabling him to obtain $\gg_f x^{\delta}$ such sign changes among prime values. We might expect this to be rather far from the truth, and that perhaps there should even be $\gg_f \pi(x)$ such sign changes $p \leq x$. \\
In this paper, instead of the sequence of primes we will consider the model set of sums of two squares,
$$
\mc{N} := \{a^2 + b^2 : a,b \in \mb{Z}\},
$$ 
as a setting in which strong lower bounds on the number of sign changes can be obtained. This set also reveals itself as a natural choice given the relevance of the generating function for the set of perfect squares, the Jacobi theta function, in the theory of modular forms. \\
In \cite{BanPa}, the authors consider the problem of counting sign changes in the sequence $\{\lambda_f(n)\}_{n \in \mc{N}}$. They showed that there are $\gg x^{1/8-\e}$ such sign changes in $(x,2x]$. 
Their proof ultimately relies on comparing the partial sums 
$$
\sum_{x < n \leq x+h} \lambda_f(n)r(n) \quad \text{ and } \sum_{x < n \leq x+h} \lambda_f(n)^2 r(n),
$$ 
where $r(n)$ denotes the number of representations of $n$ as a sum of two squares, and with $h = h(x) \geq x^{7/8+\e}$. Using a contour integration argument involving Rankin-Selberg $L$-functions, they derive a contradiction to the assertion that $\lambda_f(n) \geq 0$ (say) for all $x < n \leq x+h$. The structure of $\mc{N}$ (in particular the convolution formula $\tfrac{1}{4}r = 1\ast \chi_{4}$, where $\chi_{4}$ is the non-principal character modulo $4$) seems to play a crucial role. \\
In view of the result of Matom\"{a}ki and Radziwi\l\l{} mentioned above, even accounting for the sparseness of $\mc{N}$ it seems that one ought to do better, with the optimal result expected to be $\gg \tfrac{x}{\sqrt{\log x}} \asymp |\mc{N} \cap [1,x]|$ sign changes in $[1,x]$. 
It should be noted that this cannot be achieved by their method, which relies on 
Deligne's bound $|\lambda_f(n)| \leq d(n)$ as
$$
\sum_{x < n \leq x+h}\lambda_f(n)^2 r(n) \leq \left(\max_{x < m \leq x+h} d(m)\right) \sum_{x < n \leq x+ h} \lambda_f(n)r(n)
$$ 
to obtain a contradiction to the purported non-negativity of $\lambda_f(n)$ on $[x,x+h]$. This forces any admissible choice of length $h = h(x)$ to satisfy $h \geq \exp\left((1+o(1))\frac{\log x}{\log\log x}\right)$ by invoking pointwise bounds on the divisor function, and the number of sign changes obtained with this argument (as discussed later) is $\gg x/h$.  \\
In this paper, we improve upon the main theorem in \cite{BanPa} by making extensive use of the techniques in the more recent paper \cite{MRII} of Matom\"{a}ki and Radziwi\l\l{}, which among other things are applicable to sparsely-supported multiplicative functions (of which $f(n)1_{\mc{N}}(n)$ is an example). By additionally incorporating some refinements to these methods from the author's paper \cite{MRDB} (which render their results slightly more amenable to the study Fourier coefficients of cusp forms), we in fact obtain the optimal result.
\begin{cor}\label{cor:SOTS}
Let $f$ be a primitive Hecke eigencusp form without complex multiplication of weight $k\geq 2$ for the full modular group, and let $X$ be large. 
Then $\mc{N} \cap [1,X]$ contains $\gg_f \frac{X}{\sqrt{\log X}}$ sign changes for $\lambda_f$. 
\end{cor}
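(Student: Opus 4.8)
The plan is to obtain Corollary~\ref{cor:SOTS} as the special case $K = \mb{Q}(i)$ of the general sign-change theorem announced in the abstract. Since $\mb{Q}(i)$ has class number one, the set $\mc{N}_{\mb{Q}(i)}$ of norms of integral ideals is exactly $\mc{N} = \{a^2+b^2\}$, and by the Landau--Ramanujan theorem $|\mc{N}\cap[1,X]| \sim C\,X/\sqrt{\log X}$; a positive proportion of $\mc{N}\cap[1,X]$ is therefore $\gg_f X/\sqrt{\log X}$, which also matches the trivial upper bound for the number of sign changes that $\mc{N}$ can accommodate. So the task is to show that for a positive proportion of $n \in \mc{N}\cap[1,X]$ one has $\lambda_f(n)\lambda_f(n') < 0$, with $n'$ the least element of $\mc{N}$ exceeding $n$ at which $\lambda_f$ is nonzero.

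For this I would run the Matom\"aki--Radziwi\l\l{} long-sum/short-sum dichotomy for the sparsely supported multiplicative function $g := \lambda_f \cdot 1_{\mc{N}}$. The condition $n \in \mc{N}$ --- that every prime $p \equiv 3 \bmod 4$ divides $n$ to an even power --- is itself multiplicative, so $g$ is genuinely multiplicative, real-valued, and bounded in absolute value by the divisor function via Deligne. The two analytic inputs are: (i) the \emph{long-sum cancellation} $\sum_{n \le X} g(n) = o\big(\sum_{n \le X}|g(n)|\big)$, which follows from the non-pretentiousness of $\lambda_f$ --- i.e.\ $\sum_{p \le X}\big(1 - \text{Re}\,\lambda_f(p)\chi(p)p^{-it}\big)/p \to \infty$ for every Dirichlet character $\chi$ and every $t \in \R$ --- a consequence of the holomorphy and non-vanishing on $\text{Re}(s) = 1$ of the Rankin--Selberg and symmetric-power $L$-functions attached to the non-CM form $f$; and (ii) the \emph{size lower bound} $\sum_{n \le X}|g(n)| \gg_f X/\sqrt{\log X}$, which follows from a Wirsing-type mean-value estimate for the non-negative multiplicative function $|\lambda_f|\,1_{\mc{N}}$, the relevant prime sum being controlled by the Sato--Tate law for $\{\lambda_f(p)\}_p$. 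Feeding these into the sparse-support results of \cite{MRII}, in the refined form of \cite{MRDB} that tolerates the growth $|\lambda_f(n)| \le d(n)$ in place of a hard bound $|g| \le 1$, one concludes that the sign of $g$ cannot be essentially constant along $\mc{N}$ on long stretches: quantitatively, for $h = h(X)$ of the critical order $\sqrt{\log X}$ one shows that $g$ changes sign inside $(x,x+h]$ for a positive proportion of $x \le X$, and partitioning $[1,X]$ into $\asymp X/\sqrt{\log X}$ disjoint such intervals produces $\gg_f X/\sqrt{\log X}$ sign changes along $\mc{N}$.

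Finally, to pass from sign changes of $g$ to the statement about consecutive nonzero values $\lambda_f(n), \lambda_f(n')$ in $\mc{N}$, I would show that $\{n \in \mc{N} : \lambda_f(n) = 0\}$ has counting function $o(|\mc{N}\cap[1,X]|)$; this combines Serre's bound on the density of $n$ with $\lambda_f(n) = 0$ with the multiplicative structure of both that zero set (controlled by the thin set of primes $p$ with $\lambda_f(p) = 0$) and of $\mc{N}$, so that skipping the zeros of $\lambda_f$ changes the count of sign changes only negligibly. The main obstacle is step (ii) of the previous paragraph combined with the requirement that the short-interval length be of the \emph{critical} size $h \asymp \sqrt{\log X}$: at that scale a typical interval contains only $O(1)$ elements of $\mc{N}$, the machinery of \cite{MRII} operates at the edge of its range, and one must carefully control the variance and the multiplicative-energy and correlation inputs for $g = \lambda_f\,1_{\mc{N}}$ --- in particular the contribution of intervals in which $\mc{N}$ is over- or under-represented and of the atypically large values of $d(n)$ --- which is precisely what the technical refinements of \cite{MRDB} are designed to supply.
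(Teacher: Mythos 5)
Your high-level plan --- specialize Theorem~\ref{thm:CFNormForms} to $K = \mb{Q}(i)$ and invoke the Matom\"aki--Radziwi\l\l{} short/long comparison for sparsely supported multiplicative functions --- is the right one, but the central mechanism you propose departs from the paper in a way that creates a genuine gap. You run the machinery on the divisor-bounded function $g = \lambda_f \cdot 1_{\mc{N}}$ directly, attributing to \cite{MRDB} the ability to replace the hard bound $|g| \leq 1$ with $|\lambda_f(n)| \leq d(n)$. That is not what the paper does, nor what it cites \cite{MRDB} for. The paper replaces $\lambda_f$ everywhere by its \emph{sign} $\sg_f(n) \in \{-1,0,+1\}$, considers the two nonnegative $1$-bounded combinations $g_K(n)(|\sg_f(n)| + \eta\sg_f(n))$ for $\eta \in \{-1,+1\}$ (times the sieving factor $\iota_{K,f}$), and detects a sign change in a typical short interval purely from the \emph{positivity} of both short sums: positivity of the $\eta = +1$ sum yields $n_1 \in \mc{N} \cap [x,x+h]$ with $\sg_f(n_1) = +1$, positivity of the $\eta = -1$ sum yields $n_2$ with $\sg_f(n_2) = -1$, and the pair $(n_1,n_2)$ automatically brackets a sign change in the paper's sense, with no separate density estimate for $\{n \in \mc{N} : \lambda_f(n) = 0\}$ required. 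Everything stays inside the $1$-bounded class $\mc{M}(X;A,\gamma,\sg)$ of Lemma~\ref{lem:MTManDB}, and the role of \cite{MRDB} (see the footnote to that lemma) is the flexibility $\gamma < 1$ in the density-of-support hypothesis --- needed because Serre's bound on $|\{p \leq X : \lambda_f(p) = 0\}|$ only gives a $(\log X)^{-(1/4-\e)}$ saving --- not the capacity to take $g$ unbounded.

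This distinction is quantitative as well as structural. For $g = |\lambda_f| 1_{\mc{N}}$, the Sato--Tate average of $|\lambda_f(p)|$ over $p \equiv 1 \pmod 4$ is $8/(3\pi) < 1$, so the normalizing factor $\prod_{p \leq X}\left(1 + \frac{|g(p)| - 1}{p}\right)$ decays strictly faster than $\delta_K(X) \asymp (\log X)^{-1/2}$, and the critical interval length at which the variance bound barely beats the mean square is no longer $\asymp \sqrt{\log X}$; a direct comparison of $\sum_{[x,x+h]} g$ with $\sum_{[x,x+h]} |g|$ at that scale would at best lose a further power of $\log X$ and miss the optimality you claim to match. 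Moreover, even granting a variance estimate, the inference ``$|\sum g| < \sum |g|$ in $[x,x+h]$ implies a sign change there'' requires a separate proof that the short sum of $|g|$ is typically positive and of the expected size; this is exactly the positivity step the paper builds into its $|\sg_f| \pm \sg_f$ decomposition, and your proposal does not carry it out. The missing idea is precisely this passage from $\lambda_f$ to $\sg_f$.
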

We will actually prove a more general result that provides the optimal number of sign changes in $\{\lambda_f(n)\}_{\ss{n \in \mc{N}_K \\ n \leq X}}$, where $\mc{N}_K$ is the sequence of positive integers arising as norms of algebraic integers in a number field $K/\mb{Q}$. Following Matom\"{a}ki and Radziwi\l\l{} \cite[Sec. 1.3]{MRII}, we refer to these as \emph{norm forms}.  An integer $n$ is thus a norm form of $K$ if there is an algebraic integer $x$ in the ring of integers $\mc{O}_K$ of $K$ such that $N_K(x) = n$, where $N_K$ is the norm map on $K$. As the norm on $K = \mb{Q}(i)$ is simply $N_{\mb{Q}(i)}(a+ib) = a^2+b^2$, we have in the above notation $\mc{N} = \mc{N}_{\mb{Q}(i)}$. Define now
$$
\delta_K(X) := \prod_{\ss{p \leq X \\ p \neq N_K(\mf{a}) \text{ for } \mf{a} \subset \mc{O}_K }} \left(1-\frac{1}{p}\right).
$$
It is known (see e.g., \cite{Odo}) that $|\mc{N}_K \cap [1,X]| \asymp_K X \delta_K(X)$. We prove the following.
\begin{thm}\label{thm:CFNormForms}
Let $K/\mb{Q}$ be a number field, and let $f$ be a primitive Hecke eigencusp form without complex multiplication of weight $k \geq 2$ for the full modular group. As $X\ra \infty$, the number of sign changes of $\lambda_f$ in $\mc{N}_K \cap [1,X]$ is $\gg_{K,f} X\delta_K(X)$. 
\end{thm}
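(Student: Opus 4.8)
First I would recast the problem in terms of a bounded multiplicative function. Since $\text{sign}(\lambda_f(mn)) = \text{sign}(\lambda_f(m))\,\text{sign}(\lambda_f(n))$ for $\gcd(m,n)=1$, and since $1_{\mc{N}_K}$ is itself multiplicative — if $\gcd(m,n)=1$ and $mn = N_K(\mf{a})$, separating the prime ideal factors of $\mf{a}$ according to the rational prime below them writes $\mf{a} = \mf{b}\mf{c}$ with $N_K(\mf{b}) = m$, $N_K(\mf{c}) = n$ — the function $g := \text{sign}(\lambda_f)\cdot 1_{\mc{N}_K}$ is a multiplicative function with values in $\{-1,0,1\}$, supported exactly on $\mc{S}_f := \{n \in \mc{N}_K : \lambda_f(n)\neq 0\}$. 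A sign change of $\lambda_f$ on $\mc{N}_K$ (in the sense of the introduction) is exactly a sign change of $g$ on $\mb{N}$, so it suffices to exhibit $\gg_{K,f} X\delta_K(X)$ sign changes of $g$ in $(X,2X]$ and then sum dyadically (the slow variation of $\delta_K$ gives $\sum_{2^j\le X}2^j\delta_K(2^j)\asymp_K X\delta_K(X)$, while only $O(\log X)$ sign changes straddle the dyadic cut-offs). If $K=\mb{Q}$ then $\mc{N}_K=\mb{N}$, $\delta_K\equiv 1$, and the claim is the theorem of Matom\"{a}ki--Radziwi\l\l{} \cite{MRCusp}; so assume $K\neq\mb{Q}$ henceforth. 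Fix $h=h(X)=C_K/\delta_K(X)$ with $C_K$ a large constant; as shown below $\delta_K(X)\asymp(\log X)^{-\alpha_K}$ for an explicit $\alpha_K\in(0,1)$, so $h\asymp_K(\log X)^{\alpha_K}$ — the scale on which a typical interval $(x,x+h]$ meets $\mc{N}_K$ in boundedly many points.

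The heart of the matter is the pair of estimates below. \emph{(A) Mass in typical short intervals:} for a positive proportion of $x\in[X,2X]$,
\[
\sum_{x < n \le x+h}|g(n)| = \#\big(\mc{S}_f\cap(x,x+h]\big) \gg_{K,f} h\,\delta_K(X).
\]
Here $1_{\mc{N}_K}(n)\,1_{\lambda_f(n)\neq 0}$ is a nonnegative multiplicative function ($1_{\lambda_f\neq 0}$ being multiplicative since $\lambda_f$ is) of mean $\asymp_{K,f}\delta_K(X)$ over $[1,X]$: positivity of the governing Euler product amounts to the convergence of $\sum_p(1-(\text{local factor}_p))$, and since $1-(\text{local factor}_p)\ll \tfrac1p\,1_{\lambda_f(p)=0}+\tfrac1{p^2}$ (as $\lambda_f(p^a)=0$ pins $\lambda_f(p)$ to finitely many values, so higher-power vanishing costs $\asymp p^{-2}$), this follows from Serre's bound $\#\{p\le X:\lambda_f(p)=0\}\ll_f X/(\log X)^{1+\delta_0}$ (some $\delta_0=\delta_0(f)>0$) together with $\sum_p p^{-2}<\infty$. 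A Paley--Zygmund argument for this multiplicative function then yields (A) once $C_K$ is large enough, the variance being controlled by the pair-correlation bound $\tfrac1X\#\{X<n\le2X: n,\,n+m\in\mc{S}_f\}\ll_{K,f} C(m)\,\delta_K(X)^2$ on average over $0<|m|\le h$.

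\emph{(B) Cancellation in short sums, on average:} for the same $h$,
\[
\frac1X\int_X^{2X}\Big|\sum_{x < n\le x+h}g(n)\Big|\,dx = o_{K,f}\!\big(h\,\delta_K(X)\big).
\]
This is exactly where the Matom\"{a}ki--Radziwi\l\l{} theory for sparsely-supported multiplicative functions \cite{MRII}, in the refined form of \cite{MRDB}, is used: for $g$ supported on $\mc{N}_K$ and $h$ in the admissible range it reduces (B) to the non-pretentiousness of $g$, namely $\mb{D}(g,h_0;X)^2\gg_K \log\log X$ uniformly over completely multiplicative $h_0$ with $|h_0|\le 1$ (in particular $h_0(n)=\psi(n)n^{it}$). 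This in turn follows from the fact that a positive proportion of rational primes are \emph{not} norm forms of $K$: an unramified $p$ lies in $\mc{N}_K$ iff $\mathrm{Frob}_p$ has a fixed point in the transitive degree-$[K:\mb{Q}]$ action of $\mathrm{Gal}(\tilde K/\mb{Q})$ on its coset space modulo $\mathrm{Gal}(\tilde K/K)$ ($\tilde K$ the Galois closure of $K$), and by Jordan's theorem a transitive permutation group of degree $\ge 2$ contains a fixed-point-free element, so the fixed-point-free classes have positive density $\alpha_K$; hence $\sum_{p\le X,\,p\notin\mc{N}_K}1/p=(\alpha_K+o(1))\log\log X$ by Chebotarev. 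Since $g(p)=0$ for these primes, each contributes exactly $1/p$ to $\mb{D}(g,h_0;X)^2$ while the remaining terms are nonnegative, which gives the claim; the same computation yields $\delta_K(X)\asymp(\log X)^{-\alpha_K}$, so that $h\to\infty$ with $h\asymp_K(\log X)^{\alpha_K}$ lies in the admissible range for \cite{MRII,MRDB}.

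Finally, combining (A) and (B): by Markov's inequality the exceptional set for (B) has measure $o(X)$, so intersecting with the positive-proportion set from (A) leaves a positive proportion of $x\in[X,2X]$ with $\big|\sum_{x<n\le x+h}g(n)\big| < \sum_{x<n\le x+h}|g(n)|$. For each such $x$ the function $g$ assumes both signs on $(x,x+h]$, hence — listing the nonzero values of $g$ in $(x,x+h]$ in increasing order — there is a sign change of $g$ whose two endpoints both lie in $(x,x+h]$. As a given sign change lies inside $(x,x+h]$ only for $x$ in an interval of length $\le h$, double counting gives $h\cdot\#\{\text{sign changes of }g\text{ in }(X,2X]\}\gg_{K,f} X$, i.e.\ $\gg_{K,f} X/h\asymp_K X\delta_K(X)$ sign changes; summing dyadically completes the proof. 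I expect the principal difficulty to be step (B) at the \emph{sharp} scale $h\asymp 1/\delta_K(X)$ — the regime in which $(x,x+h]$ carries only boundedly many norm forms on average, so that the requisite cancellation must be won genuinely on average over $x$ and every error term kept normalised against $\delta_K(X)$ rather than against $1$ — which is precisely what \cite{MRII} together with the refinements of \cite{MRDB} is designed to deliver. A secondary technical point is the control in (A) of the vanishing locus of $\lambda_f$, which must be shown to occupy only a proportion of the thin set $\mc{N}_K$ bounded away from $1$, even inside typical short intervals.
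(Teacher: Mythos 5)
Two steps of your proposal fail, and both reflect real subtleties the paper has to work around. The first is your claim that $1_{\mathcal{N}_K}$ is multiplicative. Your ideal-factorization argument shows that the set $\widehat{\mathcal{N}}_K$ of \emph{ideal} norms is multiplicative, but $\mathcal{N}_K$ consists of norms of \emph{elements} (equivalently, of principal integral ideals up to unit/total-positivity issues), and the factors $\mathfrak{b},\mathfrak{c}$ of a principal $\mathfrak{a}$ with $N_K(\mathfrak{a})=mn$ need not be principal. Already for $K=\mathbb{Q}(\sqrt{-5})$ one has $6=1^2+5\cdot 1^2\in\mathcal{N}_K$ but $2,3\notin\mathcal{N}_K$, so $1_{\mathcal{N}_K}(6)\neq 1_{\mathcal{N}_K}(2)1_{\mathcal{N}_K}(3)$. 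For $K$ of class number larger than one your $g=\text{sign}(\lambda_f)\cdot 1_{\mathcal{N}_K}$ is therefore not multiplicative, and none of the tools you want to use (Wirsing, the Hal\'{a}sz/Tenenbaum estimate, the Matom\"{a}ki--Radziwi\l\l{} short-vs-long comparison) applies to it directly. The paper replaces this false step with Odoni's decomposition (Proposition \ref{prop:MROdo}), writing $1_{\mathcal{N}_K}=\sum_l c_l f_l$ as a finite linear combination of genuine multiplicative functions $f_l$ with $|f_l(p)|=\Delta_K(p)$, and runs the short-interval argument term by term.

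The second gap is in your mechanism for the cancellation in (B): you are lower-bounding the \emph{wrong} pretentious distance. You argue that $\mathbb{D}(g,n^{it}\psi;X)^2\gg\log\log X$ because the positive density of primes with $g(p)=0$ each contribute $1/p$. But in the sparse-support theory the relevant quantities are conditional on the support: in Lemma \ref{lem:MTManDB} one must control $\rho(g,n^{it};X)^2=\sum_{p\le X}\frac{|g(p)|-\text{Re}(g(p)p^{-it})}{p}$, and in the Hal\'{a}sz-type bound of Lemma \ref{lem:Ten} the governing quantity is $m_g(X;T)$ — both with $|g(p)|$ rather than $1$ in the numerator, so the primes where $g$ vanishes contribute \emph{nothing}. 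This is the whole point: the global distance $\mathbb{D}$ would only deliver $o(h)$ for the short sum, which is trivially true since the short sum is already $O(h\delta_K(X))=o(h)$. To get the required $o(h\delta_K(X))$ one must show $\text{sign}(\lambda_f)$ oscillates \emph{on the sparse support itself}, i.e.\ lower-bound $\mathbb{D}_K(\sigma_f f_l,n^{it};X)^2$, a sum restricted to $p\in\widehat{\mathcal{N}}_K$. That is the content of Lemma \ref{lem:pretDistBd}; for moderate $|t|$ it uses the Vinogradov--Korobov zero-free region, and for small $|t|$ it genuinely needs the Chebotarev--Sato--Tate hybrid (Lemma \ref{lem:CST}), which in turn rests on the Newton--Thorne automorphy of symmetric powers. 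Your proposal has no analogue of this step, and it cannot be recovered by observing that many $g(p)$ vanish. A lesser point: your Paley--Zygmund route to (A) requires a pair-correlation input that is not a black box; the paper instead applies the same comparison theorem (Corollary \ref{cor:cardCloseSums}, with $t_0=0$) to the non-negative multiplicative function $\Delta_K\iota_{K,f}$ and then invokes Wirsing for the long average, which avoids any correlation estimate.
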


Returning to the setting of sums of two squares, we may modify the problem slightly by asking about sign changes among other patterns of integers, such as shifted sums $a + \mc{N} = \{a + m^2 + n^2 : m,n \in \mb{N}\}$, for $a \in \mb{Z} \bk \{0\}$. In this case, it is more challenging to directly apply tools from multiplicative number theory. Nevertheless, using shifted convolution sum estimates among other techniques, we obtain a lower bound on the number of sign changes of $\lambda_f$ along $a+\mc{N}$.

\begin{thm} \label{thm:consecSoTS}
Fix $a \neq 0$. Then for any $\e > 0$ and $X$ sufficiently large there are $\gg_{\e} X^{1/2-\e}$ sign changes for $\lambda_f$ in $(a+\mc{N}) \cap [1,X]$.
\end{thm}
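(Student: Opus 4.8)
The plan is to follow the broad shape of Banerjee and Pandey's argument \cite{BanPa}, playing the twisted linear and quadratic sums of $\lambda_f$ off against one another, but to replace their contour step by a genuine shifted convolution sum estimate and to harvest many sign changes through a run-counting device. Write $r(m) = 4\sum_{d\mid m}\chi_4(d)$ for the number of representations of $m$ as a sum of two squares, so $r(n-a) > 0$ exactly when $n - a \in \mc{N}$, and for $0 \le Y \le X$ put $S(Y) := \sum_{n\le Y}\lambda_f(n)r(n-a)$, a step function constant between consecutive elements of $a + \mc{N}$. Let $R = R(X)$ be the number of sign changes of $\lambda_f$ along $(a+\mc{N})\cap[1,X]$; ordering these integers increasingly, they fall into $R+1$ maximal runs on each of which $\lambda_f$ takes a single sign (zeros being ignored). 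If $(m_{i-1},m_i]$ is the $i$-th run, then Deligne's bound $0 \le |\lambda_f(n)| \le d(n)$ and the positivity of $r(n-a)$ give
\[
\sum_{\substack{n \in a + \mc{N} \\ m_{i-1} < n \le m_i}}\lambda_f(n)^2 r(n-a) \;\le\; \Big(\max_{n \le X}d(n)\Big)\,\big|S(m_i) - S(m_{i-1})\big| ,
\]
and summing over $i$, using $\max_{n\le X}d(n) = X^{o(1)}$, yields $\sum_{n\le X}\lambda_f(n)^2 r(n-a) \ll X^{o(1)}(R+1)\sup_{0\le Y\le X}|S(Y)|$. Hence the theorem will follow from the two independent estimates
\[
\sum_{n\le X}\lambda_f(n)^2 r(n-a) \;\gg_f\; X \qquad\text{and}\qquad \sup_{0\le Y\le X}\big|S(Y)\big| \;\ll_{a,\e}\; X^{1/2+\e},
\]
which together force $R \gg_{a,\e} X^{1/2-\e}$.

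For the first estimate, write $\lambda_f(n)^2 = (\mathbf{1}\ast b)(n)$, where $\sum_{d\ge1}b(d)d^{-s} = L(s,\mathrm{sym}^2 f)/\zeta(2s)$ is regular and non-vanishing in a neighbourhood of $s=1$, so that $\sum_{d\le t}b(d) \ll t^{1-\eta}$ for some $\eta > 0$ while $\sum_{d}b(d)/d = L(1,\mathrm{sym}^2 f)/\zeta(2) > 0$; likewise $\tfrac14 r(n-a) = (\mathbf{1}\ast\chi_4)(n-a)$. The quantity in question is thus the shifted correlation $\sum_{n\le X}(\mathbf{1}\ast b)(n)(\mathbf{1}\ast\chi_4)(n-a)$ of two divisor-type functions, which one evaluates by the classical Ingham--Estermann method — separating the convolution variables with the hyperbola method and counting the solutions of the resulting pairs of congruences — obtaining a main term of the shape $C_a X$ with a power-saving error term; here $C_a$ is an Euler product each of whose local factors is strictly positive for every $a \neq 0$ (the requirement $r(n-a) > 0$, i.e.\ $n-a$ avoiding the forbidden prime-power congruences, causing no obstruction once one notes $\lambda_f(p)^2 \ge 0$), so that $C_a > 0$. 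This step is technical, but, being underpinned by a genuine main term, it is not the real difficulty.

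The heart of the matter is the cancellation estimate $\sup_{Y\le X}|S(Y)| \ll_{a,\e} X^{1/2+\e}$ — essentially square-root cancellation in the shifted convolution of the cuspidal coefficients $\lambda_f$ against the coefficients $r(\cdot - a)$ of the weight-one theta series $\theta^2$, uniformly in the cut-off $Y$. Here multiplicative techniques, in particular those of \cite{MR}, \cite{MRII} and \cite{MRDB}, are unavailable, since $n\mapsto\lambda_f(n)1_{\mc{N}}(n-a)$ is not multiplicative; one must treat $S(Y)$ as a genuine shifted convolution sum. I would open $r(n-a) = 4\sum_{d\mid n-a}\chi_4(d)$, split the divisor $d$ at $\sqrt Y$ by the hyperbola method, and apply the $\mathrm{SL}_2(\Z)$ Voronoi summation formula for $f$ to the inner congruence sums $\sum_{n\le Y,\ n\equiv a\,(d)}\lambda_f(n)$ — which carry no main term, $f$ being cuspidal — then extract cancellation from the resulting Kloosterman sums summed against the character $\chi_4(d)$ over the moduli (via the Kuznetsov formula, or equivalently the Duke--Friedlander--Iwaniec $\delta$-method for shifted convolution sums); the complementary range $d > \sqrt Y$ is handled symmetrically, now viewing $\lambda_f(d\delta + a)$ as running over a progression modulo the short variable $\delta$ and invoking Wilton's bound $\sum_{m\le y}\lambda_f(m)e(m\beta) \ll_f \sqrt y\log 2y$, uniform in $\beta$. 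Since the coefficients $r(\cdot)/4 = \mathbf{1}\ast\chi_4$ correspond to $\zeta(s)L(s,\chi_4)$, a product of $\mathrm{GL}_1$ $L$-functions, no loss from a non-tempered spectrum should intervene, and the exponent $\tfrac12-\e$ is exactly what emerges. The delicate part is making the estimate uniform in the cut-off $Y$ — a smooth partition of unity together with control of the transition terms — and I note that even a weaker shifted convolution bound $|S(Y)| \ll X^{1-\delta}$ would still yield $\gg X^{\delta - \e}$ sign changes by precisely the same scheme.
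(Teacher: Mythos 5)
Your proposal takes a genuinely different route from the paper. You globalize the Banerjee--Pandey scheme: rather than working in short intervals $[x,x+h]$ with $h\gg X^{1/2+\e}$ and deriving a contradiction interval by interval, you count runs of constant sign among $(a+\mc{N})\cap[1,X]$ directly, applying Deligne's bound $\lambda_f(n)^2 \le d(n)|\lambda_f(n)|$ run by run to obtain $\sum_{n\le X}\lambda_f(n)^2 r(n-a) \ll X^{o(1)}(R+1)\sup_{Y\le X}|S(Y)|$. Because the loss from $\max_{n\le X}d(n)$ is only $X^{o(1)}$, this global version does not suffer the short-interval restriction the paper's introduction ascribes to Banerjee--Pandey, and if the two ancillary estimates $\sum_{n\le X}\lambda_f(n)^2 r(n-a)\gg X$ and $\sup_{Y\le X}|S(Y)|\ll_\e X^{1/2+\e}$ hold you do indeed get $R\gg X^{1/2-\e'}$. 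The second estimate is exactly the square-root--cancelling shifted convolution bound that the paper cites Ravindran for; you propose rederiving it via Voronoi and Kuznetsov, which is plausible but duplicates existing work and is not the easy part.

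The real gap is the first estimate, $\sum_{n\le X}\lambda_f(n)^2 r(n-a)\gg X$, which you dismiss as ``technical but not the real difficulty.'' Writing $\lambda_f(n)^2 = (\mathbf{1}\ast b)(n)$ with $b(d)=\lambda_f(d^2)$ (the Dirichlet coefficients of $L(s,\mathrm{sym}^2 f)/\zeta(2s)$), you face a shifted divisor correlation whose coefficients $b(d)$ oscillate in sign, are supported on all integers (so the sieve-friendly estimate for $r$ in progressions to squarefree moduli -- the paper's Lemma \ref{lem:SmithThm} -- does not apply off the shelf), and satisfy only Deligne-type pointwise bounds. Carrying out Ingham--Estermann here requires controlling $\sum_{(d_1,d_2)\mid a}b(d_1)\chi_4(d_2)/[d_1,d_2]$ and showing the limiting Euler product is strictly positive at every place, despite the facts that $\lambda_f(p^{2k})$ can be negative and that the local factors entangle the congruence conditions from $r(n-a)$ with the Hecke relations at $p$; none of this is clearly out of reach, but it is substantial work that you have not done. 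The paper sidesteps exactly this obstacle: thresholding at $|\lambda_f(n)|>X^{-\delta}$ costs only a factor $X^{\delta}$, after which one needs only the soft facts that $\lambda_f(n)\neq 0$ on a positive proportion of $a+\mc{N}$ in typical short intervals (Proposition \ref{prop:lowBdCorrel}, proved by sieving with Tolev's progression estimate) and that very small nonzero $|\lambda_f(n)|$ are rare (Proposition \ref{prop:smallrnCorr}, via a Diophantine lower bound for $|\lambda_f(p^\nu)|$ plus Thorner's effective Sato--Tate). Your architecture is sound and would prove the theorem, but as written it trades the paper's two elementary sieve propositions for an unestablished correlation asymptotic; you would need either to carry that asymptotic through in detail or to import the paper's thresholding device in its place.
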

\subsection{Proof Ideas}
\subsubsection{Proof of Theorem \ref{thm:CFNormForms}}
Let $f$ be a non-CM eigencusp form for $\text{SL}_2(\mb{Z})$, and let $K/ \mb{Q}$ be a number field. Denote by $\sg_f(n)$ the sign of $\lambda_f(n)$, using the convention $\sg_f(n) = 0$ whenever $\lambda_f(n) = 0$. \\
Let $1 \leq h \leq X$. The proof of Theorem \ref{thm:CFNormForms} follows the strategy of \cite[Cor. 3]{MR}, the objective of which is to show that for all but $o(X)$ points $x \in [X,2X]$, for $X$ large, the two averages
$$
\frac{1}{h}\sum_{x < n \leq x + h} 1_{\mc{N}_K}(n) (|\sg_f(n)| + \sg_f(n)) , \quad \quad \frac{1}{h}\sum_{x < n \leq x+h} 1_{\mc{N}_K}(n)(|\sg_f(n)| - \sg_f(n))
$$
are simultaneously $ > 0$. The positivity of the left-hand sum implies the existence of $x < n_1 \leq x+h$ such that $\sg_f(n_1) > 0$, and similarly that of the right-hand sum implies that $\sg_f(n_2) < 0$ for some $x < n_2 \leq x+h$. This gives rise to a sign change in most short intervals $[x,x+h]$, and by dissecting $[X,2X]$ into disjoint such short intervals, yields  $\gg X/h$ distinct sign changes. \\
Of course, as $\mc{N}_K$ is a sparse subset of positive integers containing $\asymp \delta_K(X) X$ integers $n \in [X,2X]$, it is not even guaranteed that the above sums have non-empty support unless $h$ is sufficiently large. To avoid this issue we require, in particular, that $h \geq C \delta_K(X)^{-1}$, for $C > 0$ a large constant (depending at most on $K$ and $f$). \\
Depending on the class number of $K$, the argument must be modified. Consider first when $K$ has class number 1 (this being in particular the case for $K = \mb{Q}(i)$). Then (by Dedekind's ideal factorization theorem) the indicator function $1_{\mc{N}_K}$ is multiplicative.  
Now, naturally if we had a means of making the comparison
$$
\frac{1}{h}\sum_{x < n \leq x+h} 1_{\mc{N}_K}(n) (|\sg_f(n)| + \eta \sg_f(n)) = \frac{1}{X}\sum_{X  <n \leq 2X} 1_{\mc{N}_K}(n)(|\sg_f(n)| + \eta \sg_f(n)) + o(\delta_K(X)),
$$
for \emph{typical} $x \in [X,2X]$ and each $\eta \in \{-1,+1\}$ then our problem becomes substantially easier. The key issue, that of the sparseness of support, is a main obstacle in this comparison. Fortunately, the recent work of Matom\"{a}ki and Radziwi\l\l{} \cite{MRII} is dedicated to addressing exactly such complications, and may be employed to give such a comparison theorem. Our particular application of their methods is worked out in Section \ref{sec:MainArg}.\\
The problem is thus reduced to to one involving mean values of multiplicative functions with sparse support. As we show using work of Wirsing \cite{Wir} and of Tenenbaum \cite{TenVM}, respectively, we may deduce that
$$
\frac{1}{X}\sum_{X < n \leq 2X} |\sg_f(n)| 1_{\mc{N}_K}(n) \asymp \delta_K(X), \quad \quad \frac{1}{X}\sum_{X < n \leq 2X}  \sg_f(n)1_{\mc{N}_K}(n) = o(\delta_K(X)).
$$
Whereas the former bound is of a more classical nature, the latter relies on an understanding (roughly speaking) of the prime sums
$$
\sum_{\substack{p \leq X \\ p \in \mc{I}_K}} \frac{1-\text{Re}(\sg_f(p)p^{-it})}{p}, \quad |t| \leq \log X,
$$
where $\mc{I}_K$ is a class of ideals of the ring $\mc{O}_K$ of algebraic integers of $K$.  
As a concrete example, when $K = \mb{Q}(i)$ and $\mc{N}_K$ is the sequence of sums of two squares, $\mc{I}_K$ is the set of primes $p \equiv 1 \pmod{4}$ (the collection of primes where $1_{\mc{N}}$ is supported). By employing a (generalization of a) hybrid Chebotarev-Sato-Tate type estimate due to R.M. Murty and V.K. Murty \cite{Mur}, we are able to condition on both the sign of $\lambda_f(p)$ as well as the ideal class of $p$ in order to prove that these sums tend to $\infty$ uniformly in $|t| \leq \log X$ as $X \ra \infty$.  \\
When $K$ has class number $> 1$ the problem is rendered more complicated by the fact that the indicator $1_{\mc{N}_K}$ is no longer a multiplicative function. Luckily, a result of Odoni (discussed in some detail in \cite{MRII}) allows one to express $1_{\mc{N}_K}$ as a linear combination of structured multiplicative functions, and (with some work) similar techniques may then be applied to the individual terms of these linear combinations.
\subsubsection{Proof of Theorem \ref{thm:consecSoTS}}
Fix a non-zero integer $a$. To prove Theorem \ref{thm:consecSoTS}, we make use of more classical arguments about sign changes. Namely, we find a lower bound on the least $h$ such that $\lambda_f(n)$ must change sign in an interval $[x,x+h]$ for \emph{typical} $x \in [X,2X]$. Dually, we show by way of contradiction that if $\lambda_f(n) \geq 0$, say, for all $x < n \leq x+h$ then we obtain contradictory upper and lower bounds for the quantity
$$
\sum_{x < n \leq x+h} \lambda_f(n) r(n-a)
$$
for typical $x \in [X,2X]$, provided $h \gg X^{1/2+\e}$. This results in $\gg X/h \gg X^{1/2-\e}$ distinct sign changes. Here, as above, $r(n)$ denotes the number of representations of $n$ as a sum of two squares. \\
The upper bound we need is furnished by estimates for shifted convolution sums arising from the spectral theory of automorphic forms. In this context, one obtains the square-root cancelling bound $O_f(X^{1/2+\e})$ using the work of Ravindran \cite{Rav}. \\
The lower bound requires more work, and principally involves restricting the sum to those $n$ for which $|\lambda_f(n)| > X^{-\delta}$. Assuming $\lambda_f(n) \geq 0$ for all $x < n \leq x+h$, it follows that then 
\begin{equation}\label{eq:conditioning}
\sum_{x < n \leq x+h} \lambda_f(n)r(n-a) \geq X^{-\delta} \sum_{\ss{x < n \leq x+h \\ \lambda_f(n) > X^{-\delta}}} r(n-a) = X^{-\delta}\left(\sum_{\ss{x < n \leq x+h \\ \lambda_f(n) \neq 0}} r(n-a) - \sum_{\ss{x < n \leq x+h \\ 0 < \lambda_f(n) \leq X^{-\delta}}} r(n-a)\right).
\end{equation}
Since, as Serre \cite{Ser} showed, the set of primes $p$ such that $\lambda_f(p) = 0$ is quite sparse, the conditon $\lambda_f(n) \neq 0$ is easily dealt with using sieve theoretical arguments, and the first expression in brackets in \eqref{eq:conditioning} is shown to be of size $\gg h$ for typical $x \in [X,2X]$. \\
On the other hand, the support of the second sum in \eqref{eq:conditioning} is shown to be sparse for typical $x$. Indeed, by exploiting the multiplicativity of $\lambda_f$ together with some Diophantine information about coefficients of cusp forms, we show that any $n$ for which $0 < |\lambda_f(n) | < X^{-\delta}$ has a prime power divisor $p^\nu > (\log X)^{c_1}$ for which $|\lambda_f(p^\nu)| < (\log X)^{-c_2}$, for $c_1,c_2 > 0$ constants depending at most on $\delta$ and $f$. Using a recent version of the Sato-Tate theorem with a quantitative error term due to Thorner \cite{Tho}, we show that the set of such multiples $n$ is a sparse set, and therefore typical length $h$ short intervals have few such multiples. This is essentially enough to conclude that the second sum in brackets in \eqref{eq:conditioning} is $o(h)$ for most $x \in [X,2X]$, giving rise to the conflicting bounds
$$
hX^{-\delta} \ll_f \sum_{x < n \leq x+h} \lambda_f(n)r(n-a) \ll_f X^{1/2+\e},
$$
(taking $\delta=\e$ small) whenever $h \gg_f X^{1/2+3\e}$. 
\subsection{Structure of the Paper}
The paper is structured as follows. In Section \ref{sec:background} we give some background results in the theory of norm forms and that of cusp forms, and we also summarize some results from multiplicative number theory of relevance in the rest of the paper. In Section \ref{sec:MainArg} we prove Theorem \ref{thm:CFNormForms}, and in Section \ref{sec:shiftedSoTS} we prove Theorem \ref{thm:consecSoTS}.

\subsection{Acknowledgements}
Most of this work was completed during a research visit at Queen's University. The author would like to thank Queen's for their support. The author is also indebted to Oleksiy Klurman, Youness Lamzouri, Maksym Radziwi\l\l{} and Asif Zaman for their advice and encouragement. 

\section{Background Results}\label{sec:background}
\subsection{Background on norm forms} \label{subsec:Bkgd}
Fix $K/\mb{Q}$ a number field, and let $\mc{N}_K$ be the sequence of norm forms of $K$. In general, the indicator function $g_K(n) := 1_{\mc{N}_K}(n)$ of $\mc{N}_K$ is not a multiplicative function, and thus an analysis of sign changes of $\lambda_f(n)g_K(n)$ purely on the basis of multiplicative techniques seems a priori difficult. However, Odoni \cite{Odo} showed that $g_K$ can be written as a linear combination of certain multiplicative functions that have fairly predictable values, and this will be sufficient for the proof of Theorem \ref{thm:CFNormForms}. The material required to these ends is drawn essentially from \cite[Sec. 13]{MRII}, which leverages Odoni's ideas. We describe the salient points in brief detail here. \\
Let $\bar{K}$ denote the normal closure of $K$, and let $\mc{H}(\bar{K})$ denote the narrow class field of $\bar{K}$. Thus, $G_K := \text{Gal}(\mc{H}(\bar{K})/\bar{K})$ is canonically isomorphic to the narrow class group $H(\bar{K})$ of $\bar{K}$, via the map $\mc{C} \in H(\bar{K}) \mapsto \sg_{\mc{C}}$, the Frobenius conjugacy class of the ideal class $\mc{C}$. \\
Consider first the case of prime norm forms.  Since the set of rational primes that ramify in $K$ is finite (they all must divide the discriminant $\text{disc}(K/\mb{Q})$) we will be able to ignore them in the sequel, and therefore focus mainly on the unramified primes. Any unramified rational prime $p$ factors as
$$
p\mc{O}_K = \mf{p}_1\cdots \mf{p}_r,
$$
where, as $N_K(p) = p^{[K:\mb{Q}]}$, we have $N_K \mf{p}_l = p^{m}$ for some $1\leq m \leq [K:\mb{Q}]$. Each of the prime ideals $\mf{p}_l$ lying above $p$ belongs to some class in the narrow class group $H(\bar{K})$. Let $\{C_1,\ldots,C_{h(K)}\}$ be an enumeration of these classes, $h(K) = |H(\bar{K})|$ denoting the narrow class number of $\bar{K}$. For each $1 \leq i \leq [K:\mb{Q}]$, $1 \leq j \leq h(K)$ and rational prime $p$ let us write
$$
b_{i,j}(p) := |\{\mf{p}|p : N_K(\mf{p}) = p^i, \mf{p} \in C_j \}|, \quad B(p) := \{b_{i,j}(p)\}_{\ss{1 \leq i \leq [K:\mb{Q}] \\ 1 \leq j \leq h(K)}}.
$$
Following Odoni, the matrix $B(p)$ is called the \emph{pattern of} $p$. Let $\mc{B} = \{B(p)\}_p$ be the (finite) collection of all pattern matrices that occur. \\
It will be profitable to have access to asymptotic formulae for the number of rational primes with a given pattern, which can be achieved by an application of the Chebotarev density theorem. Indeed, Odoni \cite[Thm. 4.1]{Odo} showed that for a given $B \in \mc{B}$ there is a collection $\mc{C}_B$ of conjugacy classes  of $G_K$ such that
$$
p \text{ is unramified and } B(p) = B \text{ if, and only if, the Frobenius class } \sg_p \in \mc{C}_B.
$$
By the Chebotarev density theorem (see \cite{ThoZam} for an unconditional result that is state-of-the-art), it thus follows that for any $B \in \mc{B}$ there is a positive constant $c(B) := \frac{|\mc{C}_B|}{h(K)} > 0$ such that as $X \ra \infty$ (keeping $K$ fixed),
\begin{equation}\label{eq:CBDforPatterns}
|\{p \leq X : \text{ unramified, } B(p) = B\}| = c(B) \int_2^X \frac{dt}{\log t} + O_K\left(X e^{-c_K\sqrt{\log X}}\right),
\end{equation}
for some constant $c_K > 0$ depending only on $K$. \\
We highlight the following consequence of this. Define
$$
\widehat{\mc{N}}_K := \{n \in \mb{N} : \exists \mf{a} \subset \mc{O}_K \text{ with } N_K(\mf{a}) = n\}.
$$
Since $N_K(\alpha) = N_K(\alpha \mc{O}_K)$ for all $\alpha \in \mc{O}_K$ we have $\mc{N}_K \subseteq \widehat{\mc{N}}_K$, though in general these sets differ. By Dedekind's theorem on factorization of integral ideals, $\widehat{\mc{N}}_K$ is a multiplicative set, i.e., $m,n \in \widehat{\mc{N}}_K$ iff $mn \in \widehat{\mc{N}}_K$.\\
As discussed by Odoni \cite[p. 71]{Odo}, there is a collection $\mf{C}$ of ideal classes such that an unramified prime $p \in \mc{N}_K$ if, and only if, $R(p) \cap \mf{C} \neq \emptyset$, where $R(p)$ is the collection of integral ideals $\mf{a} \subseteq \mc{O}_K$ such that $N_K(\mf{a}) = p$. By necessity, $\mf{a} = \mf{p}$ must then be a prime ideal, lying in some ideal class $C_j$ containing $\mf{p}$, and since $N_K(\mf{p}) \in \mf{p}$ we must have $\mf{p}|p$. Thus, there is $1 \leq j \leq h(K)$ such that $\mc{C}_j \in \mf{C}$ and $b_{1,j}(p) > 0$. In other terms, among unramified primes,
$$
p \in \mc{N}_K \text{ if, and only if, } \sum_{\ss{1 \leq j \leq h(K) \\ \mc{C}_j \in \mf{C}}} b_{1,j}(p) > 0.
$$
Define $\mc{U} \subseteq \mc{V} \subseteq \mc{B}$ the subcollections of patterns $B = \{b_{i,j}\}$ such that 
\begin{align*}
B \in \mc{U} &\text{ if and only if } \sum_{j : \mc{C}_j \in \mf{C}} b_{1,j} > 0, \\
B \in \mc{V} &\text{ if and only if } \sum_j b_{1,j} > 0.
\end{align*}
Combined with the Chebotarev density theorem, these remarks imply the following.
\begin{lem}\label{lem: countNK}
Let $\beta_K := \sum_{B \in \mc{U}} c(B)$ and $\tau_K := \sum_{B \in \mc{V}} c(B)$. Then
\begin{align}\label{eq:asympNF}
&|\{p \leq X : p \in \mc{N}_K\}| = \beta_K \int_2^X \frac{dt}{\log t} + O_K\left(Xe^{-c_K\sqrt{\log X}}\right), \\
&|\{p \leq X : p \in \widehat{\mc{N}}_K \}| = \tau_K \int_2^X \frac{dt}{\log t} + O_K\left(Xe^{-c_K\sqrt{\log X}}\right).
\end{align}
\end{lem}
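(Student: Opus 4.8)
The plan is to partition the unramified rational primes $p \leq X$ according to their pattern $B(p) \in \mc{B}$ and to invoke the effective Chebotarev asymptotic \eqref{eq:CBDforPatterns} for each pattern class separately. First I would recall from the discussion preceding the lemma that, among unramified primes, one has $p \in \mc{N}_K$ precisely when $\sum_{j : \mc{C}_j \in \mf{C}} b_{1,j}(p) > 0$, i.e.\ precisely when $B(p) \in \mc{U}$; and similarly that $p \in \widehat{\mc{N}}_K$ precisely when there is a prime ideal $\mf{p} \mid p$ with $N_K(\mf{p}) = p$, i.e.\ when $\sum_j b_{1,j}(p) > 0$, i.e.\ when $B(p) \in \mc{V}$. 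The reduction to prime ideals in the description of $\widehat{\mc{N}}_K$ is forced: if $N_K(\mf{a}) = p$ is prime then $\mf{a}$ admits no nontrivial factorization, and $p = N_K(\mf{a}) \in \mf{a}$ gives $\mf{a} \mid p$, so $\mf{a}$ is one of the primes above $p$ with residue degree $1$.

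Next I would strip off the ramified primes and write
$$
|\{p \leq X : p \in \mc{N}_K\}| = |\{p \leq X : p \text{ unramified}, \ B(p) \in \mc{U}\}| + O_K(1),
$$
the error absorbing the finitely many primes ramifying in $K$ (all dividing $\text{disc}(K/\mb{Q})$, hence $O_K(1)$ in number, independently of $X$), for which membership in $\mc{N}_K$ is immaterial. Summing \eqref{eq:CBDforPatterns} over the patterns $B \in \mc{U}$ then gives
$$
|\{p \leq X : p \text{ unramified}, \ B(p) \in \mc{U}\}| = \left(\sum_{B \in \mc{U}} c(B)\right) \int_2^X \frac{dt}{\log t} + O_K\!\left(|\mc{U}|\, X e^{-c_K \sqrt{\log X}}\right).
$$
Since $\mc{B}$, and hence $\mc{U}$, is a finite set whose size depends only on $K$, the factor $|\mc{U}|$ is absorbed into the implied constant, and with $\beta_K := \sum_{B \in \mc{U}} c(B)$ this is exactly the first line of \eqref{eq:asympNF}. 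The second line follows verbatim upon replacing $\mc{U}$ by $\mc{V}$ and $\beta_K$ by $\tau_K := \sum_{B \in \mc{V}} c(B)$, using that $\widehat{\mc{N}}_K$-membership of an unramified $p$ is equivalent to $B(p) \in \mc{V}$.

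As for difficulties, there is essentially no analytic obstacle at this stage: the substance — Odoni's translation of norm-form membership into Frobenius (pattern) conditions, together with the unconditional effective Chebotarev density theorem of \cite{ThoZam} — has already been imported. The only points requiring care are bookkeeping: confirming that the finitely many ramified primes contribute only $O_K(1)$ (so it is harmless whether or not they happen to lie in $\mc{N}_K$ or $\widehat{\mc{N}}_K$), and noting that a sum of $O_K(1)$ many error terms of shape $O_K(Xe^{-c_K\sqrt{\log X}})$ is again of that shape, since $|\mc{B}|$ is an absolute (i.e.\ $K$-dependent) constant and $Xe^{-c_K\sqrt{\log X}} \to \infty$. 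One should also sanity-check the equivalences $p \in \mc{N}_K \Leftrightarrow B(p) \in \mc{U}$ and $p \in \widehat{\mc{N}}_K \Leftrightarrow B(p) \in \mc{V}$ against the definitions, in particular that the inclusion $\mc{U} \subseteq \mc{V}$ mirrors $\mc{N}_K \subseteq \widehat{\mc{N}}_K$ and that $\mf{C}$ is precisely the class set picked out by Odoni's criterion.
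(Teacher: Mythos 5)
Your proof is correct and follows essentially the same route as the paper, which does not write out a separate argument but instead asserts that the lemma follows from the pattern-equivalences ($p\in\mc{N}_K \Leftrightarrow B(p)\in\mc{U}$, $p\in\widehat{\mc{N}}_K \Leftrightarrow B(p)\in\mc{V}$ among unramified primes) combined with the Chebotarev asymptotic \eqref{eq:CBDforPatterns}. You have simply made explicit the finite summation over patterns, the absorption of the $O_K(1)$ ramified primes, and the absorption of the finitely many error terms — exactly the bookkeeping the paper leaves implicit.
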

With the above preliminaries in hand, we can now proceed towards Odoni's decomposition theorem, as presented in \cite[Sec. 13]{MRII}. In the sequel, define the multiplicative function $\Delta_K$ at prime powers $p^{\nu}$ via
$$
\Delta_K(p^{\nu}) := \begin{cases} 1 \text{ if $p \in \widehat{\mc{N}}_K$} \\ 0 \text{ otherwise.} \end{cases}
$$
Since $\widehat{\mc{N}}_K \supseteq \mc{N}_K$ we have $\Delta_K(n) \geq g_K(n)$ for all integers $n$.
\begin{prop}[Matom\"{a}ki-Radziwi\l\l{}, Lemma 13.3 of \cite{MRII}; Odoni \cite{Odo}] \label{prop:MROdo}
There are positive real constants $\alpha = \alpha(K)$, $\rho = \rho(K)$, non-negative integers $M = M(K)$, $R = R(K)$ with $R > M$, an integer $D = D(K) \geq 1$ and complex numbers $c_i = c_i(K) \in \mb{C}$ for $0 \leq i \leq R$ such that for all $n \in \mb{N}$,
$$
g_K(n) = \sum_{0 \leq l \leq M} c_i f_i(n) + \sum_{M+1 \leq l \leq R} c_if_i(n),
$$
where the functions $f_i : \mb{N} \ra \mb{C}$ are multiplicative with $f_i(p)^D = \Delta_K(p)$ for all $p$ and $0 \leq i \leq R$, and satisfy the following properties for sufficiently large $X$: 
\begin{enumerate}[(i)]
\item For each $0 \leq l \leq R$ and $2 \leq w \leq z \leq X$,
$$
\sum_{w \leq p \leq z} \frac{|f_i(p)|}{p} = \tau_K \sum_{w \leq p \leq z} \frac{1}{p} + O_K\left(\frac{1}{\log w}\right);
$$
\item If $p \nmid \text{disc}(K/\mb{Q})$ then $f_l(p) = \Delta_K(p)$ for all $0 \leq l \leq M$; 
\item $\sum_{0 \leq l \leq M} c_l > 0$; and
\item For each $M+1 \leq l \leq R$ we have
$$
\min_{|t| \leq 2X} \sum_{p \leq X} \frac{\Delta_K(p) - \text{Re}(f_l(p)p^{-it})}{p} \geq \rho \log\log X.
$$
\end{enumerate}
\end{prop}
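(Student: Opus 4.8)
The construction of the multiplicative functions $f_l$ and of the constants $c_l$, together with properties (ii) and (iii), is due to Odoni \cite{Odo} and is put into the form required here in \cite[Sec.~13]{MRII}; my plan is to take that construction as input and to verify the two points that need more comment in the present formulation, namely (i) and (iv). Recall the shape of the construction: passing to the normal closure $\bar{K}$ and using character orthogonality on the (abelian) narrow class group $H(\bar{K})\cong G_K$, one writes $g_K(n)=\sum_\chi\widehat{c}(\chi)g_\chi(n)$, where $g_\chi$ is the multiplicative function whose value at an unramified prime $p$ is $\Delta_K(p)\chi(\sg_p)$ (with fixed algebraic values at the finitely many ramified primes and at higher prime powers); the $f_l$ are these $g_\chi$, re-indexed and re-scaled so that $f_l(p)^D=\Delta_K(p)$ at \emph{every} prime, $D$ being a multiple of the exponent of $H(\bar{K})$. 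The indexing is arranged so that $0\le l\le M$ are precisely those $f_l$ coming from characters trivial on the subgroup of $G_K$ generated by the Frobenius classes of the primes of $\widehat{\mc{N}}_K$: for these $f_l(p)=\Delta_K(p)$ at all unramified $p$, which is (ii), while $M+1\le l\le R$ correspond to characters non-trivial on that subgroup.

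Property (i) follows at once. Since $\Delta_K(p)\in\{0,1\}$ and $f_l(p)^D=\Delta_K(p)$, the value $f_l(p)$ is $0$ or a $D$-th root of unity according as $\Delta_K(p)$ is $0$ or $1$, so $|f_l(p)|=\Delta_K(p)$ for every prime $p$; hence $\sum_{w\le p\le z}|f_l(p)|/p=\sum_{w\le p\le z,\,p\in\widehat{\mc{N}}_K}1/p$, and partial summation applied to the second estimate of Lemma~\ref{lem: countNK} gives $\sum_{p\le Y,\,p\in\widehat{\mc{N}}_K}1/p=\tau_K\log\log Y+O_K(1)$. That $O_K(1)$, together with the contribution of the $O_K(1)$ ramified primes (each $\ll_K1$), is $\ll_K1/\log w$ for $2\le w\le z$, which is (i).

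The substance lies in (iv), and I expect the large-$t$ part of the range to be the main obstacle. Fix $M+1\le l\le R$, so that $f_l(p)=\Delta_K(p)\chi_l(\sg_p)$ at unramified $p$ with $\chi_l$ non-trivial on the group generated by the relevant Frobenius classes; since the terms with $p\notin\widehat{\mc{N}}_K$ vanish, I need a lower bound, uniform in $|t|\le2X$, for
$$
S_l(t):=\sum_{\ss{p\le X\\ p\in\widehat{\mc{N}}_K}}\frac{1-\text{Re}(f_l(p)p^{-it})}{p}=\tau_K\log\log X-\text{Re}\sum_{\ss{p\le X\\ p\in\widehat{\mc{N}}_K}}\frac{f_l(p)}{p^{1+it}}+O_K(1).
$$
For $|t|$ at most a fixed power of $\log X$ I would bound the remaining sum, via Chebotarev's theorem and the classical zero-free region, by the values at $s=1+it$ of the finitely many Hecke (equivalently Artin) $L$-functions attached to the characters of $G_K$; none of these has a pole at $s=1$ except $\zeta(s)$, which enters only through the trivial character and with a coefficient $\kappa_l$ of modulus strictly smaller than $\tau_K$ --- precisely because $\chi_l$ is non-trivial on $\langle\sg_p:p\in\widehat{\mc{N}}_K\rangle$. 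Using the non-vanishing of Hecke $L$-functions on $\text{Re}(s)=1$ and the standard size bounds on that line, this forces $\text{Re}\sum_{p\le X,\,p\in\widehat{\mc{N}}_K}f_l(p)p^{-1-it}\le(\tau_K-\rho_1)\log\log X+O_K(1)$ for a suitable $\rho_1=\rho_1(K)>0$, whence $S_l(t)\ge\tfrac{\rho_1}{2}\log\log X$ in this range. (When $\chi_l$ happens to be constant on those Frobenius classes, so that $|\kappa_l|=\tau_K$, one uses instead that $f_l$ is then a fixed non-trivial root of unity $\zeta$ times $\Delta_K$, and $\text{Re}(\zeta^{-1})<1$.) For larger $|t|$ the $L$-function input degrades, and one must instead exploit the oscillation of $p^{-it}$ over the primes of $\widehat{\mc{N}}_K$ --- a non-pretentiousness estimate for Frobenius-type multiplicative functions, uniform for $|t|\le2X$, that combines exponential-sum bounds for primes with Chebotarev equidistribution. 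This is the technical core of the matter, carried out in \cite[Sec.~13]{MRII} and refined in \cite{MRDB}; I would invoke it directly. Taking $\rho$ below the constants obtained in the two regimes gives (iv).

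Finally, (iii) can itself be recovered from (iv) if a self-contained argument is wanted: by Hal\'asz's theorem (in the form valid for multiplicative functions dominated by a fixed power of the divisor function) each $f_l$ with $l>M$ has $\tfrac1X\sum_{n\le X}f_l(n)=o(\delta_K(X))$, while $f_l=\Delta_K$ at unramified $p$ for $l\le M$, so $\sum_{0\le l\le M}c_lf_l$ has mean value $\bigl(\sum_{0\le l\le M}c_l\bigr)$ times a fixed positive multiple of $\delta_K(X)$; since $\tfrac1X\sum_{n\le X}g_K(n)=|\mc{N}_K\cap[1,X]|/X\asymp_K\delta_K(X)>0$ by Lemma~\ref{lem: countNK}, comparing forces $\sum_{0\le l\le M}c_l$ to be a positive real. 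One may take $\alpha=\alpha(K)=1-\tau_K$, so $\delta_K(X)\asymp_K(\log X)^{-\alpha}$.
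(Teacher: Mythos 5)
Your proposal is, in essence, the same as the paper's: you defer to Odoni and \cite[Lem.~13.3]{MRII} for the construction and for (ii)--(iv), you derive (i) from the observation that $f_l(p)^D=\Delta_K(p)$ forces $|f_l(p)|=\Delta_K(p)\in\{0,1\}$, and you note that $D$ exists because the values $f_l(p)$ at unramified primes are roots of unity of bounded order. Two remarks.

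First, a small imprecision in your (i). You write that partial summation gives $\sum_{p\le Y,\,p\in\widehat{\mc{N}}_K}1/p=\tau_K\log\log Y+O_K(1)$ and then assert that the $O_K(1)$ is $\ll_K 1/\log w$; a constant is of course not $\ll 1/\log w$. The correct route is to apply partial summation directly to the effective error term in Lemma~\ref{lem: countNK}: writing $S(t)=|\{p\le t:p\in\widehat{\mc{N}}_K\}|-\tau_K\,\mathrm{li}(t)\ll_K t\,e^{-c_K\sqrt{\log t}}$, one finds
\[
\sum_{w\le p\le z}\frac{|f_l(p)|}{p}-\tau_K\sum_{w\le p\le z}\frac{1}{p}=\int_w^z\frac{dS(t)}{t}\ll_K e^{-c_K\sqrt{\log w}}\ll_K\frac{1}{\log w},
\]
the contribution of the finitely many ramified primes being absorbed in the same way. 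This is what the paper does (implicitly) and it does give the stated $O_K(1/\log w)$.

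Second, you slightly misidentify where the real work lies. Property (iv) is stated verbatim in \cite[Lem.~13.3]{MRII}, so your sketch of a two-regime argument (Hecke $L$-functions for small $|t|$, exponential sums for large $|t|$) is well-motivated background but not something you need to reconstruct --- and indeed you ultimately concede you would invoke \cite{MRII} directly. What actually requires a new observation in this formulation is the existence of the integer $D$ with $f_l(p)^D=\Delta_K(p)$ at \emph{every} prime and for \emph{every} $l$; the paper's point is that in Odoni's construction $f_l(p)$ depends only on the pattern $B(p)$ and equals a root of unity $\zeta_{l,B}\in\mu_{d_B}$, so that $D=\mathrm{lcm}\{d_B:B\in\mc{B}\}$ works because $\mc{B}$ is finite. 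Your phrasing ($D$ a multiple of the exponent of $H(\bar{K})$) is consistent with this but skips the finiteness-of-$\mc{B}$ argument that justifies it. Your alternative derivation of (iii) from (iv) via Hal\'asz is sound and is a genuine, if unnecessary, variant of the citation in the paper.
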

\noindent The utility of (iv) will become evident in Section \ref{subsec:PretNT}, where we will appeal to some notions from pretentious number theory.
\begin{proof}
Claims (ii),(iii) and (iv) are explicitly stated in \cite[Lem 13.3]{MRII}. Claim (i) follows immediately from the claim $f_l(p)^D = \Delta_K(p)$,
which implies that $|f_l(p)| = \Delta_K(p) \in \{0,1\}$ for all $l$ and $p$. Thus, (i) follows from Lemma \ref{lem: countNK} and partial summation. \\
Otherwise, the only statement mentioned here that does not explicitly appear in \cite[Lem. 13.3]{MRII} is the existence of $D$. However, the functions $f_i$ are constructed in such a way that for each $B \in \mc{B}$ there is a root of unity $\zeta_{i,B} \in \mu_{d_B}$ of some order $d_B \geq 1$ such that whenever $B(p) = B$ we have $f_i(p) = \zeta_{i,B}$ \cite[p. 77]{MRII}. Since $\mc{B}$ is a finite set, this constitutes a finite set of roots of unity, and thus taking $D := \text{lcm}\{d_B : B \in \mc{B}\}$, we obtain the required number $D$. This completes the proof of the proposition.
\end{proof}

\subsection{Results from the theory of modular forms}
We will need to use the following slight generalization (to forms of weight $k > 2$) of a hybrid of the Sato-Tate and Chebotarev theorems, due to\footnote{In fact, Murty and Murty prove a slightly more general statement applied to weight $k = 2$ forms (with level $\geq 1$); the statement given here will suffice for our purposes.} M.R. Murty and V.K. Murty.
\begin{lem}[Chebotarev-Sato-Tate for modular forms of weight $k \geq 2$] \label{lem:CST}
Let $M/K$ be an Abelian Galois extension and let $G = \text{Gal}(M/K)$. Let $f$ be a Hecke eigencusp form of weight $k$ and level 1, without CM. If $\mc{C}$ is a conjugacy class of $G$ then as $X \ra \infty$ we have
$$
|\{p \leq X : \sg_p \in \mc{C}, \theta_p \in [\alpha,\beta]\}| = \left(\frac{|\mc{C}|}{|G|} \cdot \frac{2}{\pi} \int_\alpha^\beta \sin^2 \theta d\theta + o(1)\right)\pi(X)
$$
where $\theta_p$ is defined implicitly via $\lambda_f(p) = 2\cos \theta_p$, and $\sg_p$ is the Artin symbol attached to $p$.
\end{lem}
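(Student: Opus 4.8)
The plan is to deduce the statement from the \emph{joint} equidistribution of the pairs $(\sg_p,\theta_p) \in G \times [0,\pi]$ with respect to the product of Haar measure on $G$ with the Sato--Tate measure $d\mu_{ST} := \tfrac{2}{\pi}\sin^2\theta\, d\theta$; since $G$ is abelian the conjugacy class $\mc{C}$ is a single element, so this is exactly what the lemma asserts. By the Weyl criterion, together with the facts that the Chebyshev polynomials of the second kind $U_n(\cos\theta) = \sin((n+1)\theta)/\sin\theta$ ($n \geq 0$) form an orthonormal basis of $L^2([0,\pi],\mu_{ST})$ and the characters $\chi$ of $G$ an orthonormal basis of $L^2(G)$, it suffices to prove that
$$
\sum_{p \leq X}\chi(\sg_p)\,U_n(\cos\theta_p) = o(\pi(X))
$$
for every pair $(\chi,n)$ other than the trivial one $(\chi_0,0)$, the latter being the prime number theorem.

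Next I would identify the summand arithmetically: $U_n(\cos\theta_p) = \lambda_f(p^n)$ is, after the usual normalization, the $p$-th Hecke eigenvalue of the symmetric power lift $\mathrm{Sym}^n f$, while by class field theory for the abelian extension $M/K$ the value $\chi(\sg_p)$ is $\psi(p)$ for a fixed finite-order Hecke character $\psi$ determined by $\chi$ (a Dirichlet character when $K = \mb{Q}$). Up to the negligible contribution of the ramified primes and of higher prime powers, the sum above is then a sum of coefficients of the $L$-function $L(s,\mathrm{Sym}^n f \otimes \psi)$ over primes. A standard Tauberian argument (Wiener--Ikehara, or Perron's formula with a classical zero-free region) reduces the required bound to two analytic facts: that $L(s,\mathrm{Sym}^n f \otimes \psi)$ continues holomorphically to $\mathrm{Re}(s)\geq 1$, and that it is non-vanishing on the line $\mathrm{Re}(s)=1$. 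The absence of a pole at $s=1$ is precisely the cuspidality of $\mathrm{Sym}^n f$ for $n\geq 1$ (using that $f$ is non-CM), and $\psi\neq\psi_0$ in the remaining case $n=0$.

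These analytic inputs are available in full: the automorphy of all symmetric powers of level-$1$ holomorphic newforms (Newton--Thorne) — or, more than enough here, the potential automorphy of Barnet-Lamb--Geraghty--Harris--Taylor — combined with Rankin--Selberg theory to exclude zeros on $\mathrm{Re}(s)=1$, and the stability of automorphy under twisting by finite-order Hecke characters. The point that makes the weight $k>2$ case no harder than the weight $2$ case treated by Murty and Murty is that the weight affects only the archimedean data (Hodge--Tate weights, gamma factors) of the associated Galois representations and $L$-functions, and not their holomorphy or edge non-vanishing; the Murty--Murty argument therefore carries over with no essential change. I regard this analytic package as the substantive (but black-box) content, with the genuinely new task being only to record its weight-independence.

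The one place I anticipate any friction, and hence the step to handle carefully, is the bookkeeping through class field theory and the attendant uniformity: one must check that the Hecke characters $\psi$ attached to the characters $\chi$ of $G$ have conductor and infinity-type bounded in terms of $M/K$ alone, so that the $o(1)$ is uniform over $\chi$, and that the contributions of ramified primes and of $p^{k}$ with $k\geq 2$ are indeed $o(\pi(X))$. Since $G$ is finite this uniformity is automatic, and it then remains only to reassemble the individual Weyl sums against the two orthonormal bases to conclude.
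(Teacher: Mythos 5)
Your proposal is correct and follows essentially the same route as the paper: both arguments reduce, via the orthogonality relations for $\hat G$ and a Tauberian theorem, to the holomorphy and non-vanishing of $L(s,\mathrm{Sym}^m\rho_f\otimes\chi)$ on $\mathrm{Re}(s)\geq 1$, use Artin reciprocity to replace $\chi$ by a Hecke character $\psi$, and invoke Newton--Thorne automorphy of $\mathrm{Sym}^m\rho_f$ (and stability under twisting) to conclude. Your extra remarks on the Weyl criterion, Chebyshev polynomials, and weight-independence are faithful elaborations of the same argument rather than a genuinely different one.
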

\begin{proof}
The argument is based on \cite[Thm. 1]{MurMur}, but is simplified since we work with Abelian $G$ (given that in our application, $G = G_K$, the narrow Hilbert class field) and also in light of the recent breakthroughs on automorphy lifting of $\text{Sym}^m\rho_f$, due to Newton and Thorne \cite{NeTh}. \\
Let $\rho_f$ be a Galois representation associated to $f$. Since $G$ is Abelian, the irreducible representations of $G$ are all 1-dimensional, thus characters. Let $\hat{G}$ be the set of characters of $G$. By Tauberian theorems and the orthogonality relations on $\hat{G}$, it suffices to show that for any $\chi \in \hat{G}$ and $m \geq 1$ we have that $L(s,\text{Sym}^m \rho_f \otimes \chi)$ is analytic and non-vanishing on $\text{Re}(s) \geq 1$. By the general theory of automorphic $L$-functions, it thus suffices to show that $L(s,\text{Sym}^m\rho_f \otimes \chi)$ is automorphic. But by the Artin reciprocity law, $\chi$ corresponds to a Hecke character $\psi$, and since $\text{Sym}^m\rho_f$ is automorphic over $\mb{Q}$ by \cite{NeTh}, so is $\text{Sym}^m \rho_f \otimes \psi$, and the claim follows. 
\end{proof}

In order to keep track of sign changes we will require some control over the set of vanishing of $\lambda_f$. This will be aided by the following result of Serre (for the best result in this direction, however, see \cite{ThoZamLT})	.
\begin{lem}[Serre, Thm. 15 of \cite{Ser}] \label{lem:Ser}
Let $f$ be a Hecke eigencusp form of weight $k$ and level 1 without CM. Then for any $\e > 0$, if $X \geq X_0(\e)$ we have
$$
|\{p \leq X : \lambda_f(p) = 0\}| \ll_{\e,f} \frac{X}{(\log X)^{5/4-\e}}.
$$
\end{lem}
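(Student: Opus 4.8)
The plan is to recover the proof of Serre \cite{Ser}, which passes through the mod-$\ell$ Galois representations attached to $f$ together with an effective Chebotarev density theorem. I describe the argument when the Hecke field of $f$ is $\mb{Q}$, so that the Fourier coefficients $a_f(n) = \lambda_f(n)n^{(k-1)/2}$ are rational integers; for a general level-one form one argues identically after replacing the rational prime $\ell$ below by a degree-one prime of the Hecke field lying above it, of which there is a positive proportion. Recall that Deligne attaches to $f$, for each prime $\ell$, a continuous representation $\rho_{f,\ell} \colon G_{\mb{Q}} \ra \text{GL}_2(\mb{Z}_\ell)$, unramified outside $\ell$ (as $f$ has level one), with $\text{tr}\,\rho_{f,\ell}(\text{Frob}_p) = a_f(p)$ and $\det\rho_{f,\ell}(\text{Frob}_p) = p^{k-1}$ for $p \neq \ell$; reducing modulo $\ell$ gives $\bar\rho_{f,\ell} \colon G_{\mb{Q}} \ra \text{GL}_2(\mb{F}_\ell)$ with $\text{tr}\,\bar\rho_{f,\ell}(\text{Frob}_p) \equiv a_f(p) \pmod{\ell}$. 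Being of level one, $f$ is automatically non-CM and without inner twists, so by the large image theorem of Serre, Swinnerton-Dyer and Ribet there is a finite set $S_f$ of primes outside which $G_\ell := \text{Im}\,\bar\rho_{f,\ell}$ contains $\text{SL}_2(\mb{F}_\ell)$; in particular $\ell^3 \ll |G_\ell| \leq \ell^4$ and, $k$ being even, the abelianization $G_\ell^{\text{ab}}$ is cyclic of even order. Let $L_\ell$ be the field cut out by $\bar\rho_{f,\ell}$, so that $\text{Gal}(L_\ell/\mb{Q}) \cong G_\ell$, $L_\ell$ is ramified only at $\ell$ with unique quadratic subfield $\mb{Q}(\sqrt{\ell^\ast})$, $\ell^\ast := (-1)^{(\ell-1)/2}\ell$, its degree is $n_\ell := [L_\ell : \mb{Q}] \leq \ell^4$, and, the ramification at $\ell$ being tame for $\ell$ large, $\log|\text{disc}(L_\ell)| \ll \ell^4 \log\ell$.

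The crucial observation is that if $\lambda_f(p) = 0$ for a prime $p \nmid \ell$ then $a_f(p) = 0$, hence $\text{tr}\,\bar\rho_{f,\ell}(\text{Frob}_p) = 0$; that is, the Frobenius class of $p$ lands in the conjugation-invariant set $T_\ell := \{g \in G_\ell : \text{tr}\,g = 0\}$, for which an elementary count of trace-zero matrices of fixed determinant gives $|T_\ell|/|G_\ell| \ll 1/\ell$. Thus for any $\ell \notin S_f$,
\begin{align*}
|\{p \leq X : \lambda_f(p) = 0\}| &\leq |\{p \leq X : \bar\rho_{f,\ell}(\text{Frob}_p) \in T_\ell\}| \\
&= \frac{|T_\ell|}{|G_\ell|}\,\text{Li}(X) + \mathcal{E}_\ell(X) \;\ll\; \frac{X}{\ell \log X} + \mathcal{E}_\ell(X),
\end{align*}
where $\mathcal{E}_\ell(X)$ is the error term supplied by effective Chebotarev for $L_\ell/\mb{Q}$. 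I would then choose $\ell$ to be the largest prime outside $S_f$ with $\ell \leq (\log X)^{1/4 - \e}$, so that $\ell \asymp_{\e,f} (\log X)^{1/4 - \e}$ and the main term is already $\ll_{\e,f} X/(\log X)^{5/4 - \e}$; it remains to check that $\mathcal{E}_\ell(X)$ is negligible for this $\ell$.

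This is the heart of the matter, and the step I expect to be the main obstacle. The ``smooth'' part of $\mathcal{E}_\ell(X)$, of shape $X\exp(-c(\log X)^{1/2} n_\ell^{-1/2})$, is admissible exactly because $n_\ell \leq \ell^4 \leq (\log X)^{1 - 4\e}$ forces the exponent to be $\geq (\log X)^{2\e}$ --- and it is precisely this constraint that caps $\ell$ near $(\log X)^{1/4}$ and so pins the final exponent at $5/4$. The remaining danger is a Landau--Siegel zero $\beta_1$ of $\zeta_{L_\ell}$. By the classical results of Stark and Odlyzko, such an exceptional real zero is simple, so it must be a zero of $L(s,\chi)$ for an irreducible $\chi$ with $\chi(1)=1$ and $\chi$ real (otherwise $L(s,\bar\chi)$ vanishes there too and the zero has multiplicity $\geq 2$); since $G_\ell^{\text{ab}}$ is cyclic this $\chi$ is the Legendre symbol modulo $\ell$, of conductor $\ell$, and Siegel's theorem gives $1 - \beta_1 \gg_\e \ell^{-\e}$. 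As $\ell$ is only a tiny power of $\log X$, the resulting contribution $\ll \frac{X}{\ell \log X}\exp(-c_\e(\log X)^{1 - O(\e)})$ is negligible. The appeal to Siegel's theorem is what renders $X_0(\e)$ ineffective.

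Everything else --- the standard facts about $\bar\rho_{f,\ell}$, the matrix count, the arithmetic bookkeeping in the choice of $\ell$ --- is routine. Under GRH the Chebotarev error is $O_{f,\e}(X^{1/2+\e})$, one may take $\ell$ as large as a small power of $X$, and the same argument yields the much stronger conditional bound $|\{p \leq X : \lambda_f(p) = 0\}| \ll_{\e,f} X^{3/4 + \e}$; the gap between this and the unconditional exponent $5/4$ reflects exactly our ignorance of zero-free regions for $\zeta_{L_\ell}$.
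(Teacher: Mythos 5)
The paper does not prove this lemma; it simply cites Serre's Th\'eor\`eme 15 from \cite{Ser} as a black box (and further points to \cite{ThoZamLT} for the current record). Your reconstruction of Serre's original argument captures the correct main ideas: pass to the mod-$\ell$ Galois representations $\bar\rho_{f,\ell}$, use the Serre--Swinnerton-Dyer--Ribet large-image theorem to get $\mathrm{SL}_2(\mb{F}_\ell) \subseteq G_\ell$ for $\ell$ outside a finite set, observe that $\lambda_f(p)=0$ forces $\mathrm{Frob}_p$ into the trace-zero set $T_\ell$ with $|T_\ell|/|G_\ell| \ll 1/\ell$, and then apply an effective Chebotarev theorem to the field $L_\ell$ cut out by $\bar\rho_{f,\ell}$, optimizing over $\ell$.

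There is, however, a concrete gap in your error analysis. You invoke the Lagarias--Odlyzko \emph{asymptotic} form of effective Chebotarev and check only the size of the exponential error term $X\exp(-c\sqrt{\log X/n_\ell})$ and the potential Siegel-zero term. But that theorem comes with a nontrivial range restriction of the shape $\log X \gg n_{L_\ell}\,(\log |d_{L_\ell}|)^2$. With $n_{L_\ell} \ll \ell^4$ and (as you correctly note) $\log|d_{L_\ell}| \ll \ell^4\log\ell$, this forces $\log X \gg \ell^{12}(\log\ell)^2$, i.e.\ $\ell \lesssim (\log X)^{1/12}$, which would only yield the exponent $13/12$, not $5/4$. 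Serre's actual argument sidesteps this by using instead a Brun--Titchmarsh-type \emph{upper bound} for $\pi_C(x)$ (in the spirit of Lagarias--Montgomery--Odlyzko), which holds under a much milder condition on $\log X$ relative to $\log|d_{L_\ell}|$ and suffices here since one only needs an upper bound, not an asymptotic. That weaker hypothesis is exactly what permits taking $\ell$ as large as $(\log X)^{1/4-\e}$ and thus reaching the exponent $5/4-\e$. (One can also pass to the quotient field cut out by the projectivization $\mathrm{PGL}_2(\mb{F}_\ell)$, shaving a factor of $\ell$ off $n_{L_\ell}$, since the condition ``trace zero'' is scale-invariant.) You correctly identify the Chebotarev step as the crux, and your treatment of the potential exceptional zero via Stark--Odlyzko and Siegel, and the remark that this renders $X_0(\e)$ ineffective, are both in line with Serre's discussion; but as written the optimization of $\ell$ does not survive the range restriction of the theorem you actually cite.
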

Let us state the following very simple consequence of Serre's result, which will arise repeatedly later on. Define
$$
B_{K,f} := \{p : \lambda_f(p) = 0\} \cup \{p : p|\text{disc}(K/\mb{Q})\}.
$$
For $n \in \mb{N}$ define the multiplicative function $\iota_{K,f}(n) := \mu^2(n) 1_{p|n \Rightarrow p \notin B_{K,f}}$ (the fact that it is supported on squarefree integers will be used later). The following lemma is a trivial consequence of Lemma \ref{lem:Ser} and partial summation.

\begin{lem}\label{lem:iotaSupp}
For any $2 \leq w \leq z \leq X$ and $\e > 0$ we have
$$
\sum_{w \leq p \leq z} \frac{1-\iota_{K,f}(p)}{p} \ll_{\e,K} \frac{1}{(\log w)^{1/4-\e}}.
$$
\end{lem}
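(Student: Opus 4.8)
The plan is to reduce the claimed bound to a weighted count of primes at which $\lambda_f$ vanishes and then to quote Lemma~\ref{lem:Ser} via partial summation; indeed, as the text signals, the lemma is essentially just a restatement of Serre's bound. The first and essential step is the trivial observation that for a \emph{prime} $p$ one has $\iota_{K,f}(p) = \mu^2(p)\,1_{p \notin B_{K,f}} = 1_{p \notin B_{K,f}}$ (the squarefree indicator being automatic at a prime), so that $1 - \iota_{K,f}(p) = 1_{p \in B_{K,f}}$. Hence the sum in question is exactly $\sum_{w \leq p \leq z,\ p \in B_{K,f}} p^{-1}$, and since $B_{K,f} = \{p : \lambda_f(p) = 0\} \cup \{p : p \mid \text{disc}(K/\mb{Q})\}$, I would split it into these two pieces.

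The discriminant piece is harmless: only $O_K(1)$ primes divide $\text{disc}(K/\mb{Q})$, each is $\geq w$, so this piece contributes $\ll_K 1/w$, which is $\ll_K (\log w)^{-1/4+\e}$ since $w \geq 2$. The remaining work is to bound $\sum_{w \leq p \leq z,\ \lambda_f(p) = 0} p^{-1}$. Writing $N(t) := |\{p \leq t : \lambda_f(p) = 0\}|$, Abel summation turns this sum into $N(z)/z - N(w^-)/w + \int_w^z N(t)\,t^{-2}\,dt$, into which I would insert the bound $N(t) \ll_{\e,f} t(\log t)^{-5/4+\e}$ from Lemma~\ref{lem:Ser}. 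Assuming (as I may) that $\e < 1/4$, the boundary term is then $\ll_{\e,f}(\log w)^{-5/4+\e}$ and, after the substitution $u = \log t$, the integral is $\ll_{\e,f} \int_{\log w}^\infty u^{-5/4+\e}\,du \ll_\e (\log w)^{-1/4+\e}$; combining these with the discriminant piece yields the lemma.

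The one point requiring care is that Serre's bound is only available for $t$ beyond a threshold $X_0(\e)$, so for $w \leq X_0(\e)$ I would instead invoke the crude estimate $\sum_{p : \lambda_f(p) = 0} p^{-1} \ll_f 1$ (itself a consequence of Lemma~\ref{lem:Ser}), using that on the bounded range $2 \leq w \leq X_0(\e)$ the target quantity $(\log w)^{-1/4+\e}$ is bounded below by a positive constant depending only on $\e$. Beyond this bit of bookkeeping I do not expect any obstacle: the lemma is exactly as advertised, a routine deduction from Serre's theorem, and the final implied constant depends only on $\e$, $K$, and the fixed form $f$.
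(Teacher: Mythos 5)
Your proof is correct and follows exactly the route the paper intends: the text explicitly calls the lemma ``a trivial consequence of Lemma \ref{lem:Ser} and partial summation,'' and your argument (split $B_{K,f}$ into the finite discriminant part and the Serre part, Abel-sum against $N(t) \ll_{\e,f} t(\log t)^{-5/4+\e}$, and dispose of the bounded range $w \leq X_0(\e)$ trivially) is precisely that computation carried out. The only thing worth noting is that the implied constant genuinely depends on $f$ as well; the paper's subscript $\ll_{\e,K}$ is a harmless omission since $f$ is fixed throughout, and you correctly flag this.
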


\subsection{Results about Pretentious Number Theory}\label{subsec:PretNT}
In the sequel, our arguments dealing with multiplicative functions will, implicitly and explicitly, use notions from pretentious number theory. A key role in that theory is played by the pretentious distance functions. Write $\mb{U} := \{z \in \mb{C} : |z| \leq 1\}$. Given $y \geq 2$ and sequences $a := \{a(p)\}_p, b := \{b(p)\}_p \subset \mb{U}$ we define the pretentious distance between $a$ and $b$ up to $y$ to be
$$
\mb{D}(a,b;y) := \left(\sum_{p \leq y} \frac{1-\text{Re}(a(p)\bar{b}(p))}{p}\right)^{1/2}.
$$
This distance satisfies the \emph{pretentious triangle inequality}, i.e., if $c = \{c(p)\}_p$ is any such third sequence then
$$
\mb{D}(a,c;y) \leq \mb{D}(a,b;y) + \mb{D}(b,c;y)
$$
(see e.g. \cite[Lem. 3.1]{GSPret}). Since $\eta(u,v) := (1-\text{Re}(u\bar{w}))^{1/2}$, well-defined for $u,v \in \mb{U}$, satisfies $\eta(u,v) = \eta(u\bar{v},1) = \eta(1,\bar{u}v)$, one can consequently show the useful inequality
$$
\mb{D}(a_1a_2,b_1b_2;y) \leq \mb{D}(a_1,b_1;y) + \mb{D}(a_2,b_2;y),
$$
using the notation $(ab)(p) := a(p)b(p)$ for all $p$ for prime-indexed sequences $a$ and $b$. Among other things, iterating this leads to the result that 
\begin{equation}\label{eq:pretwithDpowers}
\mb{D}(a^d,b^d;y) \leq d\mb{D}(a,b;y) \text{ for any } d \in \mb{N}.
\end{equation}
Perhaps less well-known is the following general, weighted variant of the pretentious triangle inequality, also due to Granville and Soundararajan \cite{GSPret}.
\begin{lem} \label{lem:wtdTriEq}
Let $\{r(p)\}_p\subset [0,\infty)$, and let $a = \{a(p)\}_p, b = \{b(p)\}_p$ and $c = \{c(p)\}_p$ be as above. Then for any $y \geq 2$,
$$
\left(\sum_{p \leq y} r(p)(1-\text{Re}(a(p)\bar{c}(p)))\right)^{1/2} \leq \left(\sum_{p \leq y} r(p)(1-\text{Re}(a(p)\bar{b}(p)))\right)^{1/2} + \left(\sum_{p \leq y} r(p)(1-\text{Re}(b(p)\bar{c}(p)))\right)^{1/2}.
$$
\end{lem}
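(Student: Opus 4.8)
The plan is to reduce the inequality, after squaring, to a single application of the Cauchy--Schwarz inequality together with one elementary identity. Write $d_{ab}:=\sum_{p\le y} r(p)\bigl(1-\text{Re}(a(p)\overline{b(p)})\bigr)$, and define $d_{bc}$ and $d_{ac}$ analogously. Each summand is non-negative, since $\text{Re}(a(p)\overline{b(p)})\le|a(p)||b(p)|\le 1$, so $d_{ab},d_{bc},d_{ac}\ge 0$ and their square roots are real; the conclusion of the lemma is exactly $\sqrt{d_{ac}}\le\sqrt{d_{ab}}+\sqrt{d_{bc}}$, which (both sides being non-negative) is equivalent to $d_{ac}-d_{ab}-d_{bc}\le 2\sqrt{d_{ab}\,d_{bc}}$.

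First I would record the pointwise identity, valid for all $u,v,w\in\mb{C}$,
$$
\text{Re}\bigl(u\overline{v}+v\overline{w}-u\overline{w}\bigr)-1 = -\bigl(1-|v|^2\bigr)+\text{Re}\bigl((u-v)\overline{(v-w)}\bigr),
$$
which follows by substituting $2\text{Re}(z\overline{z'})=|z|^2+|z'|^2-|z-z'|^2$ for each of the three products on the left and then using $|u-w|^2=|u-v|^2+2\text{Re}\bigl((u-v)\overline{(v-w)}\bigr)+|v-w|^2$. Applying this with $u=a(p)$, $v=b(p)$, $w=c(p)$, multiplying by $r(p)\ge 0$, and summing over $p\le y$ gives
$$
d_{ac}-d_{ab}-d_{bc} = \sum_{p\le y} r(p)\Bigl(\text{Re}\bigl((a(p)-b(p))\overline{(b(p)-c(p))}\bigr)-\bigl(1-|b(p)|^2\bigr)\Bigr) \le \sum_{p\le y} r(p)\,|a(p)-b(p)|\,|b(p)-c(p)|,
$$
where the non-positive term $-(1-|b(p)|^2)$ was discarded (using $|b(p)|\le 1$) and $\text{Re}(\cdot)\le|\cdot|$ was used. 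By the Cauchy--Schwarz inequality with weights $r(p)$, the last sum is at most
$$
\Bigl(\sum_{p\le y} r(p)\,|a(p)-b(p)|^2\Bigr)^{1/2}\Bigl(\sum_{p\le y} r(p)\,|b(p)-c(p)|^2\Bigr)^{1/2};
$$
and since $|z|,|z'|\le 1$ gives $|z-z'|^2=|z|^2+|z'|^2-2\text{Re}(z\overline{z'})\le 2\bigl(1-\text{Re}(z\overline{z'})\bigr)$, these two factors are bounded by $\sqrt{2d_{ab}}$ and $\sqrt{2d_{bc}}$ respectively. Combining the displays yields $d_{ac}-d_{ab}-d_{bc}\le 2\sqrt{d_{ab}\,d_{bc}}$, which is what was needed.

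I do not anticipate a genuine obstacle: the argument is short and elementary. The only point that deserves attention is that $a,b,c$ take values in the whole closed disc $\mb{U}$ and not merely on the unit circle — and this is precisely where the term $-(1-|v|^2)$ in the identity appears; the point is simply that it carries the favourable sign and can be thrown away. Alternatively, one could first prove the pointwise triangle inequality $\eta(u,w)\le\eta(u,v)+\eta(v,w)$ for $\eta(u,v):=(1-\text{Re}(u\overline{v}))^{1/2}$ on $\mb{U}$ — which follows from the unweighted inequality for $\mb{D}(\,\cdot\,,\,\cdot\,;y)$ already quoted above, by taking sequences that agree with the constant sequence $1$ away from a single prime $\le y$ — and then deduce the weighted statement from Minkowski's inequality for the $\ell^2$-norm with respect to the measure assigning mass $r(p)$ to each prime $p\le y$; the Cauchy--Schwarz route above is marginally shorter and self-contained.
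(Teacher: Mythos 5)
Your proof is correct and takes a genuinely different route from the paper's. The paper first establishes the pointwise triangle inequality $\eta(u,w) \leq \eta(u,v) + \eta(v,w)$ on $\mb{U}$ (where $\eta(u,v) := (1-\text{Re}(u\bar{v}))^{1/2}$), via a chain of estimates involving the bound $\text{Im}(z)^2 \leq 2(1-\text{Re}(z))$ and the inequality $\text{Re}(z_1\bar{z_2}) \geq \text{Re}(z_1) + \text{Re}(z_2) - 1 + \text{Im}(z_1)\text{Im}(z_2)$; it then combines this with Cauchy--Schwarz applied to $\sum_p r(p)\eta(a(p),b(p))\eta(b(p),c(p))$. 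You instead skip the pointwise triangle inequality entirely: after squaring the target inequality you reduce to $d_{ac}-d_{ab}-d_{bc}\le 2\sqrt{d_{ab}d_{bc}}$, express the left side exactly via the algebraic identity $\text{Re}(u\bar v+v\bar w-u\bar w)-1 = -(1-|v|^2)+\text{Re}\bigl((u-v)\overline{(v-w)}\bigr)$, discard the non-positive term $-(1-|b(p)|^2)$, and finish with Cauchy--Schwarz together with $|z-z'|^2\le 2(1-\text{Re}(z\bar{z'}))$. Both arguments rest on the same two pillars (a pointwise estimate plus weighted Cauchy--Schwarz), but yours has the advantage of isolating cleanly the one place where $|b(p)|\le 1$ is actually used --- it is precisely the favourable sign of $-(1-|b|^2)$ --- whereas in the paper the role of the unit-disc hypothesis is diffused through several intermediate inequalities. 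Conversely, the paper's route produces the pointwise inequality $\eta(u,w)\le\eta(u,v)+\eta(v,w)$ as a reusable byproduct (indeed it is invoked elsewhere, e.g.\ to derive $\mb{D}(a^d,b^d;y)\le d\,\mb{D}(a,b;y)$). Your closing remark about the Minkowski route is essentially a reformulation of the paper's argument; the version you wrote out is a fine and marginally more self-contained alternative.
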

\begin{proof}
This is alluded to below the proof of Lem 3.1 in \cite{GSPret}. Since the proof is not given, we give the details here, elaborating slightly on the arguments in \cite{GSPret} for the sake of clarity. \\
Observe first of all that if $z \in \mb{U}$ then $|z| \leq 1$ and thus
$$
\text{Im}(z)^2 \leq 1- \text{Re}(z)^2 = (1+\text{Re}(z))(1-\text{Re}(z)) \leq 2(1-\text{Re}(z)).
$$ 
It follows, therefore, that for any $z_1,z_2 \in \mb{U}$, $2\eta(1,z_1)\eta(1,z_2) \geq |\text{Im}(z_1)||\text{Im}(z_2)|$. Next, note the property
\begin{align*}
&0 \leq (1-\text{Re}(z_1))(1-\text{Re}(z_2)) = 1 - \text{Re}(z_1) - \text{Re}(z_2) + \text{Re}(z_1)\text{Re}(z_2), 
\end{align*}
whence we may deduce that
\begin{align*}
&\text{Re}(z_1\bar{z_2}) = \text{Re}(z_1)\text{Re}(z_2) + \text{Im}(z_1)\text{Im}(z_2) \geq \text{Re}(z_1)+\text{Re}(z_2) - 1 + \text{Im}(z_1)\text{Im}(z_2).
\end{align*}
We therefore obtain that
\begin{align*}
\eta(z_1,z_2)^2 &= 1-\text{Re}(z_1\bar{z_2}) \leq 2 - \text{Re}(z_1)-\text{Re}(z_2)-\text{Im}(z_1)\text{Im}(z_2) \\
&\leq \eta(1,z_1)^2 + \eta(1,z_2)^2 + |\text{Im}(z_1)||\text{Im}(z_2)| \leq \eta(1,z_1)^2 + \eta(1,z_2)^2 + 2\eta(1,z_1)\eta(1,z_2) \\
&= (\eta(1,z_1)+\eta(1,z_2))^2.
\end{align*}
Specializing to the case $z_1= u\bar{v}$ and $z_2 = v\bar{w}$, we obtain the preliminary inequality
\begin{equation}\label{eq:prelimTriEq}
\eta(u,w) \leq \eta(u,v) + \eta(v,w),
\end{equation}
for any $u,v,w\in \mb{U}$. \\
Next, note that by Cauchy-Schwarz we have
$$
\sum_{p \leq y} r(p) \eta(a(p),b(p))\eta(b(p),c(p)) \leq \left(\sum_{p \leq y} r(p) \eta(a(p),b(p))^2\right)^{1/2} \left(\sum_{p \leq y} r(p) \eta(b(p),c(p))^2\right)^{1/2},
$$
and from this and \eqref{eq:prelimTriEq} we obtain
\begin{align*}
\left(\left(\sum_{p \leq y} r(p) \eta(a(p),b(p))^2\right)^{1/2} + \left(\sum_{p \leq y} r(p)\eta(b(p),c(p))^2\right)^{1/2}\right)^2 &\geq \sum_{p \leq y} r(p)(\eta(a(p),b(p)) + \eta(b(p),c(p)))^2 \\
&\geq \sum_{p \leq y} r(p)\eta(a(p),c(p))^2.
\end{align*}
This implies the claim.
\end{proof}
We specialize the above lemma as follows. Given $y \geq 2$, a number field $K/\mb{Q}$ and multiplicative functions $\phi_1,\phi_2: \mb{N} \ra \mb{C}$, let us write
$$
\mb{D}_K(\phi_1,\phi_2;y) := \left(\sum_{\ss{p \leq y \\ p \in \widehat{\mc{N}}_K}} \frac{1-\text{Re}(\phi_1(p)\bar{\phi_2}(p))}{p}\right)^{1/2},
$$
i.e., we set $r(p) := \Delta_K(p)/p$ in the notation of the previous lemma. Trivially, $\mb{D}_{\mb{Q}}(\cdot,\cdot;y) = \mb{D}(\cdot,\cdot;y)$ for all $y\geq 2$. Note in particular that item (iv) of Proposition \ref{prop:MROdo} is precisely the statement that $\min_{|t| \leq 2X} \mb{D}_K(f_l,n^{it};X)^2 \geq \rho \log\log X$ for all $M+1 \leq l \leq R$. The above lemma allows us to conclude, analogously to \eqref{eq:pretwithDpowers}, that for any $d \geq 1$,
\begin{equation} \label{eq:pretwithDpow}
\mb{D}_K(f^d,g^d;X) \leq d\mb{D}_K(f,g;X),
\end{equation}
for $f,g$ multiplicative functions taking values in $\mb{U}$. \\
In the sequel, we take the convention that $\text{sign}(0) := 0$, and for $n \in \mb{N}$ we define the multiplicative function $\sg_f(n) := \text{sign}(\lambda_f(n))$, which takes values in $\{-1,0,1\}$. By Lemma \ref{lem:Ser} we know that $\sg_f(p) \neq 0$ for all but a zero density set of primes, and in fact by the Sato-Tate theorem we have
$$
|\{p \leq X: \sg_f(p) = +1 \}| \sim |\{p \leq X : \sg_f(p) = -1\}| \sim \frac{1}{2}\pi(X).
$$
At several junctures of our argument we will require precise information about, in particular quantitative lower bounds for, $\mb{D}_K(\sg_f,n^{it};X)$, where $|t| \leq 2X$, among other such distances. In this direction, we prove the following. 
\begin{lem} \label{lem:pretDistBd}
Let $0 \leq l \leq R$, and let $f_l$ be one of the functions mentioned in Proposition \ref{prop:MROdo}. Then there is a $\sg = \sg(K) > 0$ such that whenever $|t| \leq 2X$ we have
$$
\mb{D}_K(\sg_f f_l, n^{it};X)^2 \geq \sg \min\{\log\log X, \log(1+|t|\log X)\} - O_{K,f}(1).
$$
Moreover, as $X \ra \infty$ we have
$$
\min_{|t| \leq 2X} \mb{D}_K(\sg_f f_l, n^{it};X)^2 \ra \infty.
$$
\end{lem}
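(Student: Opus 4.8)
The plan is to remove the sign of $\lambda_f$ by a pointwise inequality, reducing matters to a Mertens-type estimate with oscillation over the multiplicative set $\widehat{\mc{N}}_K$, and then to treat separately the range of very small $|t|$ (where that estimate has no content) using the equidistribution of Lemma \ref{lem:CST}. Write $w_p := f_l(p)p^{-it}$, so that $|w_p| = \Delta_K(p) \in \{0,1\}$ and $w_p = 0$ off $\widehat{\mc{N}}_K$. The first observation is that $1 - \sg_f(p)\text{Re}(w_p) \ge 1 - |\text{Re}(w_p)| \ge \tfrac14\bigl(1 - \text{Re}(w_p^2)\bigr)$ for every $p$ (the last step from $|w_p| \in \{0,1\}$ and $1 - |x| \ge \tfrac12(1-x^2)$). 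Summing over $p \le X$, $p \in \widehat{\mc{N}}_K$ with weight $1/p$ yields $\mb{D}_K(\sg_f f_l, n^{it};X)^2 \ge \tfrac14 \mb{D}_K(f_l^2, n^{2it};X)^2$, the point being that the sign is now invisible, so no quantitative Sato--Tate input is needed. Since $f_l(p)^D = \Delta_K(p)$ gives $f_l(p)^{2D} = \Delta_K(p)^2 = \Delta_K(p)$, applying \eqref{eq:pretwithDpow} to $f_l^2$ and $n^{2it}$ and using $\Delta_K \equiv 1$ on $\widehat{\mc{N}}_K$ gives $\mb{D}_K(f_l^2, n^{2it};X) \ge \tfrac1D\mb{D}_K(1, n^{2Dit};X)$; hence
$$
\mb{D}_K(\sg_f f_l, n^{it};X)^2 \ \ge\ \tfrac1{4D^2}\,\mb{D}_K(1, n^{2Dit};X)^2 \qquad (0 \le l \le R,\ |t| \le 2X).
$$

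Next I would establish the Mertens-type estimate
$$
\mb{D}_K(1, n^{is};X)^2 = \sum_{\ss{p \le X\\ p \in \widehat{\mc{N}}_K}}\frac{1-\cos(s\log p)}{p} \ \gg_K\ \min\{\log\log X,\ \log(1+|s|\log X)\} - O_K(1), \qquad |s| \le 4DX.
$$
By Lemma \ref{lem: countNK} and partial summation the left side equals $\tau_K\log\log X - \text{Re}\sum_{p \le X,\, p\in\widehat{\mc{N}}_K} p^{is-1} + O_K(1)$, and $\tau_K > 0$. Using Odoni's description of $\widehat{\mc{N}}_K$ recalled in \S\ref{subsec:Bkgd}, the indicator of $\{p\text{ unramified} : p \in \widehat{\mc{N}}_K\}$ equals $\tau_K$ plus a $\mb{Q}$-linear combination of the class functions $p \mapsto \text{tr}\,\rho(\sg_p)$ over nontrivial irreducible Artin representations $\rho$ of the Galois closure of $K$, so that $\text{Re}\sum_{p \le X,\,p\in\widehat{\mc{N}}_K}p^{is-1} = \tau_K\,\text{Re}\log\zeta(1+\tfrac1{\log X}-is) + \sum_{\rho}c_\rho\,\text{Re}\log L(1-is,\rho) + O_K(1)$. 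Factoring each $L(s,\rho)$ through Hecke $L$-functions of bounded conductor (Brauer) and invoking their non-vanishing on $\text{Re}(s) = 1$ together with the classical bounds $|\log L(1+it,\psi)| \ll_\psi \log\log\log(|t|+3)$ for $|t|$ large (which control $|L|^{\pm 1}$ and $\arg L$ simultaneously), one finds that every nontrivial $\rho$ contributes $O_K(\log\log\log X)$ when $1 \le |s| \le 4DX$, while the pole of $\zeta$ contributes $\tau_K\log(1/|s|) + O(1)$ for $1/\log X \le |s| \le 1$ and $O_K(\log\log\log X)$ for $|s| \ge 1$. Assembling the cases gives the displayed bound, and combined with the first paragraph this yields $\mb{D}_K(\sg_f f_l, n^{it};X)^2 \ge \sg\min\{\log\log X, \log(1+|t|\log X)\} - O_{K,f}(1)$ for a suitable $\sg = \sg(K) > 0$ (the inequality being vacuous, hence automatic, when $|t| < 1/\log X$).

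For the range of small $t$, and for the final assertion, I would use $\mb{D}_K(1, n^{it};X)^2 \le \tfrac{t^2}{2}\sum_{p\le X}\tfrac{(\log p)^2}{p} \ll (|t|\log X)^2$ together with the triangle inequality for $\mb{D}_K$ (Lemma \ref{lem:wtdTriEq}) to get $\mb{D}_K(\sg_f f_l, n^{it};X) \ge \mb{D}_K(\sg_f f_l, 1;X) - O(|t|\log X)$. The quantity $\mb{D}_K(\sg_f f_l, 1;X)^2$ can be evaluated by splitting the primes of $\widehat{\mc{N}}_K$ by their pattern $B$, on each class of which $f_l(p) = \zeta_{l,B}$ is a fixed root of unity: Lemma \ref{lem:CST} shows that $\sg_f(p) = +1$ and $\sg_f(p) = -1$ each for half of the primes of pattern $B$ (in the density sense), so $\sum_{p \le X,\, B(p)=B}\sg_f(p)/p = o(\log\log X)$ and hence $\mb{D}_K(\sg_f f_l, 1;X)^2 = \tau_K\log\log X + o(\log\log X)$. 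Thus for $|t| \le \log\log\log X/\log X$ we have $\mb{D}_K(\sg_f f_l, n^{it};X)^2 \ge \tfrac{\tau_K}{2}\log\log X$ once $X$ is large, and combining this with the quantitative bound for $|t|$ above this threshold (where $\min\{\log\log X, \log(1+|t|\log X)\} \to \infty$) gives $\min_{|t|\le 2X}\mb{D}_K(\sg_f f_l, n^{it};X)^2 \to \infty$.

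I expect the main obstacle to be the uniform Mertens estimate over $\widehat{\mc{N}}_K$ for $|s|$ up to order $X$. A transfer of the classical oscillating estimate from all primes via the Chebotarev density theorem fails here, because its $O_K(X e^{-c_K\sqrt{\log X}})$ error term, after partial summation against the oscillating kernel $p^{is}$, is amplified by a factor $|s|$ and becomes useless; one is therefore forced to argue directly with the associated $L$-functions on the line $\text{Re}(s) = 1$, relying on their non-vanishing there and on the bound $|L(1+it,\psi)| \ll (\log(|t|+3))^{O(1)}$ (and its reciprocal) to keep their logarithms at size $O_K(\log\log\log X) = o(\log\log X)$.
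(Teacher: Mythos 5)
Your reduction of the problem is sound and matches the spirit of the paper's: the pointwise inequality $1 - \sg_f(p)\mathrm{Re}(w_p) \ge \tfrac14(1-\mathrm{Re}(w_p^2))$ (valid since $|w_p|\in\{0,1\}$ and $|\sg_f(p)|\le 1$) together with \eqref{eq:pretwithDpow} correctly yields $\mb{D}_K(\sg_f f_l,n^{it};X)^2 \ge \tfrac1{4D^2}\mb{D}_K(1,n^{2Dit};X)^2$, and your small-$|t|$ argument for the second assertion (triangle inequality $\mb{D}_K(\sg_f f_l,n^{it};X)\ge \mb{D}_K(\sg_f f_l,1;X)-O(|t|\log X)$, then evaluating the distance at $t=0$ pattern-by-pattern via Lemma~\ref{lem:CST}) is a clean alternative to the paper's argument conditioning on a fixed pattern $B$ and balancing an explicit parameter $Z$.

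However, the Mertens-type estimate at the heart of your argument has a genuine gap. You claim a ``classical bound'' $|\log L(1+it,\psi)|\ll_\psi \log\log\log(|t|+3)$ and deduce that each nontrivial Artin factor, and $\zeta$ itself, contributes $O_K(\log\log\log X)$ to $\mathrm{Re}\sum_{p\le X,\,p\in\widehat{\mc{N}}_K}p^{is-1}$ when $1\le|s|\le 4DX$. No such bound is known unconditionally. The best available estimates are $L(1+it,\psi)\ll \log(|t|+3)$ and $L(1+it,\psi)^{-1}\ll (\log(|t|+3))^{2/3}(\log\log(|t|+3))^{1/3}$ (Vinogradov--Korobov), which only give $|\log|L(1+it,\psi)||\ll \log\log(|t|+3)$; for $|s|$ as large as $X$ this is of size $\log\log X$, i.e.\ exactly the size of the main term $\tau_K\log\log X$, so the argument does not close. (The $\log\log\log$ bound you quote is a GRH-conditional statement, going back to Littlewood.)

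The paper avoids this by \emph{not} working with the full partial sum $\sum_{p\le X}$. It restricts the lower bound for $\mb{D}_K(1,n^{2iDt};X)^2$ to primes $p>Y$ with $Y=\exp((\log X)^{2/3+\eta})$; then the main contribution is still $\tau_K\sum_{Y<p\le X}\frac{1}{p}\gg_\eta\log\log X$, while the oscillating piece $\sum_{Y<p\le X}p^{2iDt-1}$ is shown to be genuinely $O(1)$ (not merely $o(\log\log X)$) by partial summation against the prime number theorem with the Vinogradov--Korobov error term, which is exponentially small for $u>Y$ and thus absorbs the factor $|t|\le X$ coming from differentiating $u^{2iDt}$. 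In other words, the fix is to truncate the Dirichlet series from below before comparing with $L$-functions, rather than to hope for stronger pointwise bounds on $\mathrm{Re}(s)=1$; if you make that truncation, your $L$-function route would reduce to essentially the same computation the paper performs directly.
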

\begin{proof}
We begin with the first claim.
Recall from Proposition \ref{prop:MROdo} that there is a $D = D(K) \in \mb{N}$ such that $f_l^D = \Delta_K$, and thus $(f_l(p) \sg_f(p))^{2D} = 1$ for all $p$ such that $\Delta_K(p) \lambda_f(p) \neq 0$. It follows from Lemma \ref{lem:iotaSupp} and \eqref{eq:pretwithDpow} that
\begin{align*}
(2D) \mb{D}_K(\sg_f f_l, n^{it};X) \geq \mb{D}_K((\sg_f f_l)^{2D}, n^{2iDt};X) 
&= \left(\sum_{\ss{p \leq X \\ \Delta_K(p) \lambda_f(p) \neq 0}} \frac{1-\text{Re}(p^{-2Dit})}{p}\right)^{1/2} \\
&\geq \mb{D}_K(1,n^{2iDt};X) - O_{K,f}(1).
\end{align*}
If $D|t| \leq 10/\log X$ then the claim is trivial, so we may assume otherwise that $|t| > 10/\log X$. 
Now, for $\eta \in (0,1/3)$ set $N_{X,\eta} := \exp((\log X)^{2/3+\eta})$ and let $Y := \max\{N_{X,\eta}, e^{1/|Dt|}\} \leq X$. By \cite[Lem. 13.3(ii)]{MRII} we have
$$
\mb{D}_K(1,n^{2iDt};X)^2 \geq \sum_{B \in \mc{V}} \text{Re}\left( \sum_{\ss{Y < p \leq X \\ B(p) = B}} \frac{1-p^{2iDt}}{p}\right) + O_K(1) = \tau_K \sum_{Y < p \leq X} \frac{1-\text{Re}(p^{2iDt})}{p} + O_K(1).
$$
We now consider two cases. If $1 \leq D|t| \leq 2X$ then it follows from standard arguments employing the Vinogradov-Korobov zero-free region for the Riemann zeta function that the latter prime sum is
$$
\tau_K(\tfrac 13 - \eta) \log\log X - O_K(1).
$$
Thus, for $1 \leq |t| \leq 2X$ we have
$$
\mb{D}_K(\sg_f f_l,n^{it};X)^2 \geq \frac{(1-3\eta)\tau_K}{12D^2} \log\log X - O_K(1).
$$
On the other hand, if $10/\log X < D|t| \leq 1$ then by partial summation and the prime number theorem we have
\begin{align*}
\sum_{Y < p \leq X} \frac{1-\text{Re}(p^{2iDt})}{p} &= \frac{1}{2\pi} \left(\int_0^{2\pi} (1-\cos \alpha)d\alpha\right) \cdot \log\left(\frac{\log(2D|t| \log X)}{\max\{1, D|t|\log N_{X,\eta}\}}\right) - O_K(1)\\
&\geq \left(\frac{1}{3}-\eta\right) \min\{\log\log X, \log(1+|t| \log X)\} - O_K(1).
\end{align*}
This completes the proof of the first claim with any $0 < \sg < \tau_K/(12D^2)$ (choosing $\eta$ appropriately small). \\
For the second, let $Z = Z(X) \geq 10$ be a parameter to be chosen later and observe that if $|t| \geq Z/\log X$ then by the first claim we obtain
$$
\min_{Z/\log X \leq |t| \leq 2X} \mb{D}_K(\sg_f f_l, n^{it};X)^2 \geq \sg \log Z.
$$
Next, suppose $|t| \leq Z/\log X$. 
As mentioned in the proof of Proposition \ref{prop:MROdo}, Odoni showed that $f_l$ is constant on the set of unramified primes $p \in \widehat{\mc{N}}_K$ with a fixed pattern $B(p) = B$, taking a root of unity depending on $B$ as its value. Thus, let $B \in \mc{V}$. We know that $f_j(p) = \zeta_B$ for some root of unity (possibly equal to 1) for all $p$ with $B(p) = B$. 
As discussed in Section \ref{subsec:Bkgd}, there is a collection of conjugacy classes $\mc{C}_B$ of $G_K$ for which the Frobenius $\sg_p$ of $p$ belongs to some class $C \in \mc{C}_B$. 
Applying \eqref{eq:CBDforPatterns} and partial summation, whenever $|t| \leq Z/\log X$ we obtain
\begin{align*}
\mb{D}_K(\sg_f f_j, n^{it};X)^2 &\geq  \sum_{\ss{e^{\sqrt{\log X}} < p \leq X \\ B(p) = B}} \frac{1-\sg_f(p)\text{Re}(\zeta_B p^{-it})}{p} \\
&= \frac{c(B)}{2} \log\log X + O_K(1) - \text{Re}\left(\zeta_B \sum_{C \in \mc{C}_B} \sum_{\ss{e^{\sqrt{\log X}} < p \leq X \\ \sg_p \in C}} \frac{\sg_f(p)p^{-it}}{p}\right),
\end{align*}
where the contribution from ramified primes is absorbed in the $O_K(1)$ term.
Applying partial summation, the estimate $|t| \leq Z/\log X$ and Lemma \ref{lem:CST}, we obtain that there is a function $\e(t) \ra 0$ as $t \ra \infty$ such that for each $C \in \mc{C}_B$,
\begin{align*}
&\left|\sum_{\ss{e^{\sqrt{\log X}} < p \leq X \\ \sg_p \in C}} \frac{\sg_f(p)p^{-it}}{p}\right| \ll (1+|t|) \int_{e^{\sqrt{\log X}}}^X \left|\sum_{\ss{p \leq Y \\ \sg_p \in C}} \sg_f(p)\right| \frac{du}{u^2} + \frac{1}{\sqrt{\log X}} \\
&\leq (Z \log\log X) \max_{e^{\sqrt{\log X}} \leq Y \leq X} \frac{1}{\pi(Y)} \left||\{p \leq Y : \sg_p \in C, \sg_f(p) \in [0,1]\}| - |\{p \leq Y : \sg_p \in C , \sg_f(p) \in [-1,0]\}| \right| + \frac{1}{\sqrt{\log X}}\\
&\leq \left(\min_{e^{\sqrt{\log X}} \leq Y\leq X} \e(Y)\right) Z\log\log X.
\end{align*}
If we therefore define $Z=Z(X)$ via
$$
Z(X)^2 := \left(\min_{e^{\sqrt{\log X}} \leq Y \leq X} \e(Y)\right)^{-1},
$$
we obtain that, uniformly over $|t| \leq 2X$,
$$
\sum_{p \leq X} \frac{\Delta_K(p) - \sg_f(p)\text{Re}(f_j(p)p^{-it})}{p} \geq \min\{\sg \log Z, \left(\frac{1}{2}\max_{B \in \mc{V}} c(B) - O(\tfrac{1}{Z})\right) \log\log X\} - O_{K,f}(1),
$$
and as $Z(X) \ra \infty$ with $X$, we obtain the second claim.
\end{proof}

\subsection{Background on multiplicative functions}
In the proof of our main theorems, we will need to estimate, both asymptotically and/or with non-trivial upper bounds, the averages of sparsely-supported multiplicative functions. The following two lemmas will be key in this endeavour.
The first of these results, due to Wirsing, allows us to asymptotically estimate partial sums of non-negative multiplicative functions that are slowly growing and suitably regular on the primes; it applies immediately to any such function that is everywhere bounded by 1.
\begin{lem}[Wirsing \cite{Wir}] \label{lem:Wir}
Let $\phi: \mb{N} \ra [0,\infty)$ be a multiplicative function for which there are constants $A,B > 0$ such that
$$
\sup_p \phi(p) \leq B, \quad \quad \sum_{p^{\nu}, \nu \geq 2} \frac{\phi(p^{\nu}) \log p^{\nu}}{p^{\nu}} \leq A.
$$
Assume furthermore that there is $\tau > 0$, such that 
$$
\sum_{p \leq X} \frac{\phi(p) \log p}{p} = (\tau+o(1)) \log X.
$$
Then, as $X \ra \infty$,
$$
\frac{1}{X}\sum_{n \leq X} \phi(n) = \left(\frac{e^{-\gamma \tau}}{\Gamma(\tau)} + o(1)\right) \prod_{p \leq X} \left(1-\frac{1}{p}\right) \sum_{k \geq 0} \frac{\phi(p^k)}{p^k}.
$$
\end{lem}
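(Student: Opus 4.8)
The plan is to follow the classical analytic route to Wirsing's theorem, bearing in mind that the hypothesis on the primes is only an averaged (Ces\`{a}ro-type) condition, which is precisely why the non-negativity of $\phi$ will have to be used in an essential way.

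First I would set up the generating Dirichlet series. From $\sup_p \phi(p) \le B$ and the convergence of the prime-power tail one checks that $\phi(n) \ll_{\e} n^{\e}$, so that $F(s) := \sum_{n \ge 1}\phi(n)n^{-s} = \prod_p \bigl(\sum_{k \ge 0}\phi(p^k)p^{-ks}\bigr)$ converges absolutely for $\text{Re}(s) > 1$, and one may factor $F(s) = \zeta(s)^{\tau} H(s)$ with $H(s) := \prod_p (1 - p^{-s})^{\tau}\sum_{k \ge 0}\phi(p^k)p^{-ks}$, whose local factor at $p$ equals $1 + (\phi(p) - \tau)p^{-s} + O_{A,B}(p^{-2\text{Re}(s)})$.

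The main engine is a renewal equation for the summatory function $M(x) := \sum_{n \le x}\phi(n)$. Starting from $\log n = \sum_{d \mid n}\Lambda(d)$ and exploiting the multiplicativity of $\phi$, one rewrites $\sum_{n \le x}\phi(n)\log n$ as a weighted average of the values $M(x/p^k)$; isolating the dominant contribution and using the prime hypothesis $\sum_{p \le x}\phi(p)\log p/p = (\tau + o(1))\log x$ to replace the weights by $\tau$ on average, this becomes an approximate renewal equation $M(x)\log x = \tau\int_1^x M(x/t)\,dt + (\text{error})$, equivalently, in the variables $x = e^u$, $m(u) := e^{-u}M(e^u)$, the relation $u\,m(u) = \tau\int_0^u m(v)\,dv + o(u\,m(u))$, whose solutions are $m(u) \sim c\,u^{\tau - 1}$. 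Here the non-negativity of $\phi$ does the heavy lifting: it supplies the a priori upper and lower bounds and the near-monotonicity of $M$ needed both to control the error term and to convert the approximate equation into the genuine asymptotic $m(u) \sim c\,u^{\tau-1}/\Gamma(\tau)$, the $\Gamma(\tau)$ arising as the normalising constant of the Laplace-transform solution of the limiting equation --- equivalently, the leading constant $\frac{1}{\Gamma(\tau)}x(\log x)^{\tau - 1}$ in the Selberg--Delange main term for $\zeta(s)^{\tau}$. The outstanding constant is then located by evaluating the relevant singular product: comparing the surviving truncated Euler product $\prod_{p \le X}(1 - 1/p)\sum_{k \ge 0}\phi(p^k)p^{-k}$ with Mertens' estimate $\prod_{p \le X}(1 - 1/p) \sim e^{-\gamma}/\log X$ produces the displayed factor, and finally the asymptotic for $\sum_{n \le x}\phi(n)$ itself follows from that for $\sum_{n \le x}\phi(n)\log n$ by partial summation, justified by the regularity of $M$ already obtained. (For a complete proof along these lines see \cite{Wir}.)

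The step I expect to be the main obstacle is exactly this weakness of the prime hypothesis: it is too weak to continue $H(s)$ analytically across $\text{Re}(s) = 1$ --- in particular $\sum_p(\phi(p) - \tau)/p$ need not converge --- so a direct contour-shift argument \`{a} la Selberg--Delange is unavailable, and one is forced into a Tauberian argument in which the sign condition $\phi \ge 0$ is indispensable, both for taming the error in the renewal equation and for pinning down the constant; a secondary technical nuisance is to carry the \emph{truncated} Euler product (rather than the possibly divergent full product) faithfully through all the estimates, so that the conclusion emerges in the precise normalised form stated.
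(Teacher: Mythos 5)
The paper does not prove this lemma: it is quoted verbatim as an external result of Wirsing \cite{Wir} and invoked as a black box (in the proof of Lemma~\ref{lem:longSumBounds}), so there is no in-paper argument to compare yours against. Your sketch is a faithful summary of the standard route through Wirsing's theorem --- the renewal equation $M(x)\log x \approx \tau\int_1^x M(x/t)\,dt$ obtained from $\log n = \sum_{d\mid n}\Lambda(d)$, the Tauberian passage to $m(u)\sim c\,u^{\tau-1}/\Gamma(\tau)$ with non-negativity supplying the a priori bounds, and Mertens to normalize the Euler product --- and you correctly identify where the non-negativity is indispensable (the prime hypothesis is too weak to continue $H(s)$ past $\mathrm{Re}(s)=1$). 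One small inaccuracy worth flagging: the hypotheses do \emph{not} imply $\phi(n)\ll_\e n^\e$. From $\sum_{\nu\ge 2}\phi(p^\nu)\log p^\nu/p^\nu \le A$ one only gets the pointwise bound $\phi(p^\nu)\ll p^\nu/(\nu\log p)$, so $\phi(p^\nu)$ can be as large as $\asymp p^\nu/\log p^\nu$ for an individual prime power and hence $\phi(n)$ need not be $n^{o(1)}$. Absolute convergence of $F(s)=\sum_n\phi(n)n^{-s}$ for $\mathrm{Re}(s)>1$ still holds, but the correct reason is via the Euler product: the local factor at $p$ is $1+\phi(p)p^{-\sigma}+O\bigl(p^{-2(\sigma-1)}\sum_{\nu\ge2}\phi(p^\nu)p^{-\nu}\bigr)$, and $\sum_p\sum_{\nu\ge2}\phi(p^\nu)p^{-\nu}\le A/\log 2$ by hypothesis, so the product converges for every $\sigma>1$. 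This is a cosmetic repair and does not affect the rest of the outline.
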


We would also like a result complementing Wirsing's theorem that allows us to say that if $g$ is a multiplicative function such that $|g(n)|$ satisfies the hypotheses of Lemma \ref{lem:Wir} then, provided $g$ oscillates sufficiently, we have $\sum_{n \leq X} g(n) = o(\sum_{n \leq X} |g(n)|)$. The following strong result of this kind is due to Tenenbaum.
\begin{lem}[Tenenbaum, Cor. 2.1 of \cite{TenVM}] \label{lem:Ten}
Let $T \geq 1$. Let $g : \mb{N} \ra \mb{C}$ be a multiplicative function such that
$$
\sup_p |g(p)| \leq A, \quad \sum_{p^{\nu}, \nu \geq 2} \frac{|g(p^{\nu})| (\log p^{\nu})^2}{p^{\nu}} \leq B.
$$
Assume furthermore that there is a constant $\beta > 0$ such that for any $2 \leq y \leq x$ we have
$$
\sum_{y \leq p \leq x} \frac{|g(p)|}{p} \geq \beta \log\left(\frac{\log x}{\log y}\right) + O(1).
$$
Then as $X \ra \infty$,
$$
\left|\frac{1}{X}\sum_{n \leq X} g(n)\right| \ll_{A,B,\beta} \left(\frac{1}{X}\sum_{n \leq X} |g(n)|\right) \cdot \left(\frac{1+m_g(X;T)}{e^{m_g(x;T)}} + \frac{1}{\sqrt{T}} + \frac{1}{\log x}\right),
$$
where we have denoted
$$
m_g(x;T) := \min_{|t| \leq T} \sum_{p \leq X} \frac{|g(p)|-\text{Re}(g(p)p^{-it})}{p}.
$$
\end{lem}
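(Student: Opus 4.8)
The statement is an effective, $T$-uniform Halász-type mean value theorem, and the natural plan is to peel off the two factors on the right-hand side separately: a ``size'' factor, which will turn out to be comparable to $\frac1X\sum_{n\le X}|g(n)|$, and a ``cancellation'' factor. \emph{Step 1 (the size factor).} Write $h:=|g|$, a non-negative multiplicative function. The hypotheses $\sup_p h(p)\le A$ and $\sum_{p^\nu,\,\nu\ge2}h(p^\nu)(\log p^\nu)^2 p^{-\nu}\le B$ are more than enough to place $h$ in the Wirsing--Hall--Tenenbaum regime, and together with $\sum_{p\le x}h(p)/p\ge\beta\log\log x+O(1)$ they yield
$$
\frac1X\sum_{n\le X}h(n)\asymp_{A,B,\beta}\mc{P}(X),\qquad \mc{P}(X):=\prod_{p\le X}\left(1-\frac1p\right)\sum_{\nu\ge0}\frac{h(p^\nu)}{p^\nu},
$$
the upper bound being a Shiu/Rankin-type estimate using only the growth conditions and the lower bound using the divergence $\sum_{p\le X}h(p)/p\gg\log\log X$. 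Hence it suffices to prove $\left|\frac1X\sum_{n\le X}g(n)\right|\ll_{A,B,\beta}\mc{P}(X)\left(\frac{1+m}{e^{m}}+\frac1{\sqrt T}+\frac1{\log X}\right)$ with $m:=m_g(X;T)$.

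\emph{Step 2 (Halász on the Dirichlet series).} Put $L:=\log X$, $\sigma:=1+1/L$ and $F(s):=\sum_{n\ge1}g(n)n^{-s}$. First I would record the pointwise bound, valid for all real $t$,
$$
|F(\sigma+it)|\ll_{A,B}\exp\left(\sum_{p\le X}\frac{\mathrm{Re}(g(p)p^{-it})}{p}\right)\asymp L\,\mc{P}(X)\,e^{-m_g(X;t)},
$$
where the second step uses $\sum_{p\le X}h(p)/p=\log\log X+\log\mc{P}(X)+O(1)$ and the prime-power conditions are precisely what make the ``bad'' Euler factors contribute only a bounded multiplicative correction. Then, via Perron's formula on $\mathrm{Re}(s)=\sigma$, a standard truncation whose error terms are absorbed by the Shiu bound of Step 1, and the Montgomery--Tenenbaum refinement of Halász's argument --- combining this pointwise bound on the segment $\{\sigma+it:|t|\le T\}$ with an $L^2$/mean-value bound for $F$ there, splitting the segment into frequencies near the minimizing point of $m_g(X;\cdot)$ and the complementary part --- one recovers the desired right-hand side, with the linear factor $1+m$ produced by the refinement and the $T^{-1/2}$ produced by truncating the contour integral at height $T$.

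\emph{On the hypotheses.} The constant $\beta$ enters twice: for the lower bound $\frac1X\sum_{n\le X}h(n)\gg\mc{P}(X)$ in Step 1 (so the conclusion is not vacuous) and, indirectly, to ensure $\mc{P}(X)$ does not decay faster than $1/\log X$; the conditions on $g(p^\nu)$ for $\nu\ge2$ are used for the Shiu/Rankin bound $\sum_{n\le X}h(n)\ll X\mc{P}(X)$ and to keep the Euler-product manipulations in Step 2 clean (the extra factor $(\log p^\nu)^2$, beyond what Wirsing requires, is there for the second-moment/Perron part).

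\emph{Main obstacle.} The delicate point is not the crude $e^{-m}+(\log X)^{-1}$ bound but the extraction of the sharper $(1+m)e^{-m}$ together with uniformity in $T$ carrying the clean loss $T^{-1/2}$. This needs the Montgomery--Tenenbaum second-moment treatment of $F$ near the $1$-line and a careful analysis of how the minimizing frequency in $m_g(X;\cdot)$ moves as the range $|t|\le T$ shrinks, all while carrying the unbounded weight $h(n)=|g(n)|$ through every estimate; it is exactly this non-boundedness of $|g|$ on the primes that forces the conclusion to be normalized by $\frac1X\sum_{n\le X}|g(n)|$ rather than stated as an absolute upper bound.
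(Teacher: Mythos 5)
The paper gives no proof of this lemma; the bracketed attribution ``Tenenbaum, Cor.~2.1 of \cite{TenVM}'' makes clear that it is imported directly from Tenenbaum's article and used as a black box (in the proof of Lemma \ref{lem:longSumBounds}), so there is no internal argument to compare your proposal against. Your sketch nonetheless correctly identifies the Hal\'asz--Montgomery--Tenenbaum machinery underlying the cited result: two-sided Shiu/Halberstam--Richert/Wirsing-type bounds to reduce the task to showing $\bigl|\frac1X\sum_{n\le X}g(n)\bigr|\ll\mathcal P(X)\bigl((1+m)e^{-m}+T^{-1/2}+(\log X)^{-1}\bigr)$, an Euler-product pointwise bound for $F(s)$ at $\sigma=1+1/\log X$, and the Montgomery--Tenenbaum $L^2$/Perron argument with a dichotomy between ordinates near and far from the minimizing frequency, which is exactly what produces the sharpened factor $(1+m)e^{-m}$ and the $T^{-1/2}$ truncation loss; the roles you assign to the hypotheses on $g(p^\nu)$ for $\nu\ge2$ and to the $\beta$-divergence are also the right ones. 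The one correction worth recording is notational but consequential: in your pointwise Euler-product bound the exponent should be the $t$-dependent quantity $\rho_g(t)^2=\sum_{p\le X}\bigl(|g(p)|-\operatorname{Re}(g(p)p^{-it})\bigr)/p$ evaluated at the current ordinate, not $m_g(X;\cdot)$, which by definition has already been minimized over a whole interval; keeping the two distinct is precisely what allows the near-/far-from-minimizer split that drives the Montgomery refinement.
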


Finally, we will need a means of comparing short and long interval averages of multiplicative functions. To this end we will apply a recent result of the author \cite{MRDB}, which in this context is a slight refinement (in certain aspects) of \cite[Thm. 1.9]{MRII}. To state it we will need a bit of notation. For parameters  $0 < \sg \leq A \leq 1$, $\gamma > 0$ and $X$ large we define the set\footnote{In the notation of \cite{MRDB}, the collection of such multiplicative functions is denoted as $\mc{M}(X;A,1,1;\gamma,\sg)$t.} $\mc{M}(X;A,\gamma,\sg)$ to be the collection of all multiplicative functions $g: \mb{N} \ra \mb{C}$ such that 
\begin{enumerate}
\item $|g(n)| \leq 1$ for all $n \leq X$; 
\item for all $z_0 \leq z \leq w \leq X$ we have\footnote{The need to appeal to \cite{MRDB} rather than \cite{MRII} lies here, wherein we have the flexibility of choosing $\gamma < 1$; in order to apply the main Theorem 1.9 in \cite{MRII} one would require $\gamma = 1$ instead, which will not be available for us given the current best available unconditional estimates for $|\{p \leq X: \lambda_f(p) = 0\}|$ (even using the work of \cite{ThoZamLT} in place of Lemma \ref{lem:Ser}.}
$$
\sum_{z < p \leq w} \frac{|g(p)|}{p} \geq A \sum_{z < p \leq w} \frac{1}{p} - O\left(\frac{1}{(\log z)^{\gamma}}\right);
$$
\item if $t_0 = t_0(g;X) \in [-X,X]$ is a minimizer for the map 
$$
t \mapsto \rho(g,n^{it};X)^2 := \sum_{p \leq X} \frac{|g(p)| - \text{Re}(g(p)p^{-it})}{p},
$$
then for every $t \in [-2X,2X]$ we have 
$$
\rho(g,n^{it};X)^2 \geq \sg \min\{\log\log X, \log(1+|t-t_0|\log X)\} - O_A(1).
$$
\end{enumerate}
For $g \in \mc{M}(X;A,\gamma,\sg)$ we also define
$$
H(g;X) := \prod_{p \leq X} \left(1+\frac{(|g(p)|-1)^2}{p}\right).
$$

\begin{lem}[\cite{MRDB}, Thm. 1.7] \label{lem:MTManDB}
Let $X \geq 100$. Let $g \in \mc{M}(X;A,\gamma,\sg)$ and put $t_0 = t_0(g;X)$. 	Let $10 \leq h_0 \leq X/10H(g;X)$ and set $h:= h_0 H(g;X)$. Then there is $\kappa = \kappa(A,\sg) > 0$ such that
\begin{align*}
&\frac{1}{X}\int_X^{2X}\left|\frac{1}{h}\sum_{x \leq n \leq x+h} g(n) - \frac{1}{h}\int_{x}^{x+h} u^{it_0} du \cdot \frac{1}{X}\sum_{X \leq n \leq 2X} g(n)n^{-it_0}\right|^2 dx \\
&\ll_A \left(\left(\frac{\log\log h_0}{\log h_0}\right)^A + \frac{\log\log X}{(\log X)^{\kappa}}\right) \prod_{p \leq X} \left(1+\frac{|g(p)|-1}{p}\right)^2.
\end{align*}
When $g$ is non-negative the above estimate holds with $t_0 = 0$.
\end{lem}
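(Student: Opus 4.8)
This is \cite[Thm.~1.7]{MRDB}; rather than reproduce the argument in full, I outline the strategy, which adapts the Matom\"{a}ki--Radziwi\l\l{} method \cite{MR,MRII} to multiplicative functions with sparse support. The plan is to reduce the mean square on the left-hand side to the integral of a Dirichlet polynomial over a vertical segment, and then to split that integral into a neighbourhood of the minimizer $t_0$ (a ``major arc'', which reconstructs the main term) and its complement (``minor arcs'', on which a Hal\'{a}sz-type estimate together with condition (3) supplies the saving).

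First I would absorb the twist by $n^{it_0}$ --- which is precisely what the factor $\frac1h\int_x^{x+h} u^{it_0}\,du$ in the statement accounts for --- so that one may argue as though the relevant minimizer lies at the origin (when $g\ge 0$ one has $t_0=0$ and the twist is trivial, which is the stated special case). Then I would run the combinatorial device at the heart of \cite{MR}: condition (2) guarantees that $|g|$ has, on average over $n\asymp X$, a prime factor in every dyadic range $[P,P^{1+\delta}]$ with $P$ up to a small power of $X$, so a Ramar\'{e}-type weighting (equivalently, a Tur\'{a}n--Kubilius second-moment argument) shows that, for all $x\in[X,2X]$ outside a set of measure $o(X)$, the short sum $\frac1h\sum_{x\le n\le x+h} g(n)$ agrees, up to a small error, with a weighted average over those $n=pm$ with $x\le pm\le x+h$ and $p$ in a fixed range of primes $[P_1,P_2]$, $P_2=X^{o(1)}$. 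Writing $g(pm)=g(p)g(m)$ for $p\nmid m$, this exhibits the short sum as controlled by a product $P(1+it)G(1+it)$ of Dirichlet polynomials, $P$ being the short prime polynomial supported on $[P_1,P_2]$; the same manipulation applied on $[X,2X]$ isolates the main term.

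Next I would apply Parseval's identity in the $x$-variable, which converts $\frac1X\int_X^{2X}|\cdots|^2\,dx$ into the integral of $|P(1+it)G(1+it)|^2$ against a Fej\'{e}r-type kernel. Near the origin the kernel is essentially constant, and that range reproduces $\bigl|\frac1X\sum_{X\le n\le 2X} g(n)\bigr|^2$ up to an error measuring the variation of $u^{it}$ across $[x,x+h]$. Away from the origin, for $|t|$ up to $X$, I would bound the integral by covering the $t$-segment by short intervals, applying the mean value theorem for Dirichlet polynomials to $P$ on each and a refined form of Hal\'{a}sz's inequality to $G$; condition (3) supplies the lower bound
$$
\rho(g,n^{it};X)^2 \ge \sg\min\{\log\log X,\log(1+|t-t_0|\log X)\} - O_A(1)
$$
needed on these intervals, and integrating the resulting decay produces exactly the term $\bigl(\tfrac{\log\log h_0}{\log h_0}\bigr)^A + \tfrac{\log\log X}{(\log X)^{\kappa}}$. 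The factor $\prod_{p\le X}\bigl(1+\tfrac{|g(p)|-1}{p}\bigr)^2$ on the right is the natural normalization, being comparable to the square of the mean value $\frac1X\sum_{n\le X}|g(n)|$, while the prescription $h=h_0 H(g;X)$ is exactly what makes an interval of length $h$ contain $\asymp h_0$ integers from the support of $g$, so that the bilinear form above does not degenerate.

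The hard part --- and the reason one must appeal to \cite{MRDB} rather than only \cite{MRII} --- is carrying the sparseness uniformly: the factors $H(g;X)$ and $\prod_{p\le X}\bigl(1+\tfrac{|g(p)|-1}{p}\bigr)$ have to be tracked through the Parseval step, the mean value theorem, and Hal\'{a}sz without degrading the exponent $A$, which is precisely why (3) is phrased so that $\rho(g,n^{it};X)^2$ grows like $\log(1+|t-t_0|\log X)$ away from $t_0$ rather than merely being positive there. A further technical obstruction is that condition (2) is available here only with some exponent $\gamma<1$ (reflecting the fact that $\{p:\lambda_f(p)=0\}$ is not known to be sparse enough; cf.\ Lemma \ref{lem:iotaSupp}), so the removal of the exceptional set of $x$ in the second step must be handled more carefully than in the $\gamma=1$ regime of \cite[Thm.~1.9]{MRII}.
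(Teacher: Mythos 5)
The paper itself does not reproduce a proof of this statement: it is simply cited as \cite[Thm.~1.7]{MRDB}, and your sketch is a reasonable (if inevitably impressionistic) outline of the underlying Matom\"{a}ki--Radziwi\l\l{} argument, correctly identifying the Parseval reduction, the major/minor-arc dichotomy at $t_0$, the role of the Ramar\'{e}/Tur\'{a}n--Kubilius weighting, and why the paper must invoke \cite{MRDB} rather than \cite{MRII} (the need for $\gamma<1$ in condition (2), owing to the unconditional density of $\{p:\lambda_f(p)=0\}$). On that level there is no substantive disagreement with what the paper does, which is just to cite.

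The one piece of genuine mathematical content that the paper does supply beyond the citation is the remark immediately following the lemma, which establishes the final sentence (``When $g$ is non-negative the above estimate holds with $t_0=0$''); the paper explicitly notes that this addendum is \emph{not} stated in \cite{MRDB} and therefore needs justification. Your sketch asserts ``when $g\ge 0$ one has $t_0=0$'' in passing without proof. The argument is short but worth recording: one always has $\rho(g,n^{it};X)^2\ge 0$, and for non-negative $g$ each $g(p)=\mathrm{Re}(g(p))=|g(p)|$, so
$$
\rho(g,1;X)^2=\sum_{p\le X}\frac{|g(p)|-\mathrm{Re}(g(p))}{p}=0,
$$
whence $t=0$ minimizes $t\mapsto\rho(g,n^{it};X)$ and one may take $t_0=0$. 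Since this is the only part of the statement the paper actually argues for, you should include it rather than treat it as obvious.
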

\begin{rem}
The statement that $t_0 = 0$ when $g$ is a non-negative function is not explicitly stated in \cite[Thm. 1.7]{MRDB}, but the fact that $0$ is a minimizer of $t \mapsto \rho(g, n^{it};X)$ is clear: indeed, $\rho(g,n^{it};X) \geq 0$ trivially for all $t$, and moreover as $g(p) = \text{Re}(g(p)) = |g(p)|$ in this case we have
$$
\rho(g,1;X)^2 = \sum_{p \leq X} \frac{|g(p)|-\text{Re}(g(p))}{p} = 0.
$$
\end{rem}
As a consequence of this result, we obtain the following.
\begin{cor}\label{cor:cardCloseSums}
Let $X$ be large and let $10 \leq h_0 \leq (\log X)^{100}$. Let $g \in \mc{M}(X;A,\gamma,\sg)$ and set $t_0 = t_0(g;X)$. Let $\delta = \delta(X) \in (0,1)$ be a small quantity such that
$$
\delta^{3/A} \geq \frac{\log \log h_0}{\log h_0}.
$$
Then for all but $O(\delta X)$ integers $x \in [X,2X]$ we have
$$
\left|\frac{1}{h} \sum_{x \leq n \leq x+h} g(n) - \frac{1}{h}\int_x^{x+h} u^{it_0} du \cdot \frac{1}{X}\sum_{X \leq n \leq 2X} g(n)n^{-it_0}\right| \ll \delta \prod_{p \leq X}\left(1+\frac{|g(p)|-1}{p}\right).
$$
If $g$ is non-negative then the same estimate holds with $t_0 = 0$.
\end{cor}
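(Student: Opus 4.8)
The plan is to read the corollary off from Lemma~\ref{lem:MTManDB} by Chebyshev's inequality; there is no fresh analytic content, only a short bookkeeping of parameters and an elementary passage from real $x$ to integer $x$.

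I would first apply Lemma~\ref{lem:MTManDB} with the given $h_0$ and with $h=h_0H(g;X)$. The hypothesis $10\le h_0\le X/(10H(g;X))$ holds once $X$ is large: indeed $H(g;X)=\prod_{p\le X}(1+(|g(p)|-1)^2/p)\le\prod_{p\le X}(1+1/p)\ll\log X$, so $X/(10H(g;X))\gg X/(\log X)^{O(1)}>(\log X)^{100}\ge h_0$. Writing $P:=\prod_{p\le X}(1+(|g(p)|-1)/p)$ and letting $\Delta_g(x)$ denote the quantity inside the absolute value bars in the statement, the lemma yields
$$
\frac1X\int_X^{2X}|\Delta_g(x)|^2\,dx\ll_A\Bigl(\bigl(\tfrac{\log\log h_0}{\log h_0}\bigr)^A+\tfrac{\log\log X}{(\log X)^\kappa}\Bigr)P^2.
$$

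Next I would absorb the bracketed factor into $\delta^3$. The first term is $\le\delta^3$ at once: raising the hypothesis $\delta^{3/A}\ge(\log\log h_0)/\log h_0$ to the power $A$ gives $\bigl((\log\log h_0)/\log h_0\bigr)^A\le\delta^3$. For the second term, the same hypothesis together with $h_0\le(\log X)^{100}$ and the fact that $s\mapsto(\log s)/s$ decreases for $s\ge e$ force $\delta^3\ge\bigl((\log\log h_0)/\log h_0\bigr)^A\gg_A(\log\log X)^{-A}$, which dominates $(\log\log X)/(\log X)^\kappa$ because $\kappa>0$. Hence $\tfrac1X\int_X^{2X}|\Delta_g(x)|^2\,dx\ll_A\delta^3P^2$, and Chebyshev's inequality gives
$$
\mathrm{meas}\Bigl\{x\in[X,2X]:|\Delta_g(x)|>\tfrac{\delta}{2}P\Bigr\}\le\frac{\int_X^{2X}|\Delta_g(x)|^2\,dx}{(\delta P/2)^2}\ll_A\delta X.
$$

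It remains to pass from real to integer $x$. As $x$ ranges over an interval of length $1$, only $O(1)$ of the integers $n\in[x,x+h]$ enter or leave the inner sum, while $\int_x^{x+h}u^{it_0}\,du$ changes by $O(1)$; since $|\tfrac1X\sum_{X\le n\le2X}g(n)n^{-it_0}|\le1$, the function $\Delta_g$ changes by $O(1/h)$ over such an interval, which is negligible against $\delta P$ in the ranges of interest (here one invokes $h=h_0H(g;X)$). Thus every integer $x\in[X,2X]$ with $|\Delta_g(x)|>\delta P$ sits in an interval of length $\asymp1$ on which $|\Delta_g(y)|>\tfrac{\delta}{2}P$; these intervals being pairwise disjoint, there are at most $\mathrm{meas}\{y:|\Delta_g(y)|>\tfrac{\delta}{2}P\}\ll\delta X$ such integers, as claimed. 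The non-negative case follows verbatim with $t_0=0$, using the remark after Lemma~\ref{lem:MTManDB}. If pressed to name a main obstacle, it is really only the $\delta^3$ error-term bookkeeping of the preceding paragraph — that is precisely where the constraints on $\delta$ and $h_0$ are used — since all the substantive work is already carried out in Lemma~\ref{lem:MTManDB}.
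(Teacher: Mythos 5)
Your proposal is correct and follows the paper's proof essentially verbatim: one applies Lemma~\ref{lem:MTManDB}, runs Chebyshev's inequality, and uses the hypothesis $\delta^{3/A}\ge(\log\log h_0)/\log h_0$ to bound the resulting variance by $\ll\delta^3\prod_{p\le X}(1+(|g(p)|-1)/p)^2$. You are a bit more explicit than the paper in two minor places: in checking that the secondary term $(\log\log X)/(\log X)^{\kappa}$ from Lemma~\ref{lem:MTManDB} is dominated (via $h_0\le(\log X)^{100}$), and in spelling out the passage from real to integer $x$, which the paper dispatches with the one-line observation that $x\mapsto\sum_{x\le n\le x+h}g(n)$ is piecewise constant; note your step asserting $1/h\ll\delta P$ is stated without verification and, like the paper's remark, is left informal, though it is harmless in the regime actually used.
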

\begin{proof}
Since $g \in \mc{M}(X;A,\gamma,\sg)$, Lemma \ref{lem:MTManDB} together with Chebyshev's integral inequality yields that the measure of $x \in [X,2X]$ for which the claimed bound does not hold is
\begin{align*}
&\leq \delta^{-2}\prod_{p \leq X} \left(1+ \frac{|g(p)|-1}{p}\right)^{-2} \int_X^{2X} \left|\frac{1}{h} \sum_{x \leq n \leq x+h} g(n) -  \frac{1}{h}\int_x^{x+h} u^{it_0} du \cdot \frac{1}{X}\sum_{X \leq n \leq 2X} g(n)n^{-it_0} \right|^2 dx \\
&\ll \delta^{-2}X\left(\frac{\log\log h_0}{\log h_0}\right)^A \leq \delta X.
\end{align*}
As in Lemma \ref{lem:MTManDB}, when $g$ is non-negative the same conclusion applies with $t_0 = 0$. Since $x\mapsto \sum_{x \leq n \leq x+h} g(n)$ is piecewise constant, the claimed cardinality bound for integer $x \in [X,2X]$ is equivalent to the measure bound for real $x \in [X,2X]$.  
\end{proof}

\section{Proof of Theorem \ref{thm:CFNormForms}} \label{sec:MainArg}
In this section we will prove Theorem \ref{thm:CFNormForms}. We briefly describe here the idea of the argument. Recall the notation 
$$
\delta_K(X) = \prod_{\ss{p \leq X \\ p \notin \widehat{\mc{N}}_K}} (1-1/p).
$$ Suppose we can show that for large enough $X$ and $h = h(X) \ll \delta_K(X)^{-1}$ there are $\geq 3X/4$ integers $x \in [X,2X]$ for which each of the following inequalities hold:
\begin{equation} \label{eq:nonzeroShortsums}
\sum_{x \leq n \leq x+h} g_K(n)(|\sg_f(n)| + \sg_f(n)) > 0 \quad \text{ and }  \sum_{x \leq n \leq x+h} g_K(n)(|\sg_f(n)| - \sg_f(n)) > 0.
\end{equation}
By the union bound, it would follow that for $\geq X/2$ integers $x \in [X,2X]$ there are integers $n_1,n_2 \in [x,x+h]$ such that $n_j \in \mc{N}_K$ and $\sg_f(n_1)\sg_f(n_2) < 0$. This implies the existence of a sign change for $\lambda_f(n)$ as $n$ ranges over norm forms $\mc{N}_K \cap [x,x+h]$ in $\geq X/2$ such sets. Choosing greedily a subset of these endpoints $x$ such that each pair of consecutive points are $>h$ from one another, we deduce that each of these sign changes is distinct, and their number is $\gg X/h \gg \delta_K(X) X$. This would thus yield Theorem \ref{thm:CFNormForms}. \\
In order to verify \eqref{eq:nonzeroShortsums} we will compare, using Corollary \ref{cor:cardCloseSums}, such short sums to corresponding long sums. We will then apply the results of the previous section in order to estimate these long sums. \\
Recall once again that
$$
\iota_{K,f}(n) := \mu^2(n)1_{p|n \Rightarrow p \notin B_{K,f}},
$$
where $B_{K,f} := \{p: \lambda_f(p) = 0\} \cup \{p : p|\text{disc}(K/\mb{Q})\}$. 

\begin{lem} \label{lem:longSumBounds}
With the notation of Proposition \ref{prop:MROdo}, let $M+1 \leq l \leq R$, let $u \in [-X,X]$ and let $g \in \{1,\sg_f\}$. 
Then the following bounds hold as $X \ra \infty$:
\begin{align}
&\frac{1}{X}\sum_{X \leq n \leq 2X} \Delta_K(n) \iota_{K,f}(n) 
\asymp_{K,f} \prod_{p \leq X} \left(1+\frac{\Delta_K(p)-1}{p}\right) \label{eq:Bd1}\\
&\left|\frac{1}{X}\sum_{n \leq X} \sg_f(n)\Delta_K(n)\iota_{K,f}(n)n^{-iu}\right| = o_{K,f}\left(\prod_{p \leq X} \left(1+\frac{\Delta_K(p)-1}{p}\right)\right) \label{eq:Bd2}\\
&\left|\frac{1}{X}\sum_{n \leq X} g(n)f_l(n)\iota_{K,f}(n) n^{-iu}\right| = o_{K,f}\left(\prod_{p \leq X} \left(1+\frac{\Delta_K(p)-1}{p}\right)\right). \label{eq:Bd3}
\end{align}
\end{lem}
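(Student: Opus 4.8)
The plan is to prove \eqref{eq:Bd1} with Wirsing's theorem (Lemma~\ref{lem:Wir}) and \eqref{eq:Bd2}, \eqref{eq:Bd3} with Tenenbaum's theorem (Lemma~\ref{lem:Ten}). The common thread is that every multiplicative function appearing here is supported on squarefree integers (because of the factor $\iota_{K,f}$), which makes the prime-power-tail hypotheses in both lemmas vacuous, and has absolute value equal to $\Delta_K\iota_{K,f}$ on its support; this is what lets the same prime-sum and mean-value inputs serve all three bounds.

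For \eqref{eq:Bd1}, set $\phi := \Delta_K\iota_{K,f}$, a non-negative multiplicative function bounded by $1$ with $\phi(p^\nu) = 0$ for $\nu \geq 2$, so the first two hypotheses of Lemma~\ref{lem:Wir} hold trivially. For the third, I would combine $\sum_{p \leq X}\Delta_K(p)(\log p)/p = \tau_K\log X + O(1)$, obtained by partial summation from Lemma~\ref{lem: countNK}, with $\sum_{p \leq X}(1-\iota_{K,f}(p))(\log p)/p = o(\log X)$, obtained by partial summation from Lemma~\ref{lem:Ser}; this gives the required asymptotic with $\tau = \tau_K > 0$. Wirsing's theorem then yields
$$
\frac1X\sum_{n \leq X}\phi(n) \sim \frac{e^{-\gamma\tau_K}}{\Gamma(\tau_K)}\prod_{p \leq X}\left(1-\frac1p\right)\left(1+\frac{\phi(p)}{p}\right),
$$
and it remains only to show this Euler product is $\asymp_{K,f}\prod_{p \leq X}(1+(\Delta_K(p)-1)/p) = \delta_K(X)$. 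Comparing the two products factor by factor, their quotient equals $\prod_{p \leq X,\, p \in \widehat{\mc{N}}_K,\, p \notin B_{K,f}}(1-p^{-2})\cdot\prod_{p \leq X,\, p \in \widehat{\mc{N}}_K \cap B_{K,f}}(1-p^{-1})$, and both products converge to positive constants, the second because $\sum_{p \in B_{K,f}}1/p < \infty$ (from Lemma~\ref{lem:Ser}, whose exponent $5/4-\e$ exceeds $1$). Replacing $n \leq X$ by $X \leq n \leq 2X$ changes nothing since $\prod_{X < p \leq 2X}(\cdots) = 1+o(1)$.

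For \eqref{eq:Bd2} and \eqref{eq:Bd3}, note first that the summand of \eqref{eq:Bd2} equals $\sg_f(n)f_0(n)\iota_{K,f}(n)n^{-iu}$, since $f_0$ and $\Delta_K$ agree on the (unramified, squarefree) support of $\iota_{K,f}$ by Proposition~\ref{prop:MROdo}(ii); so both \eqref{eq:Bd2} and \eqref{eq:Bd3} are instances of bounding $\frac1X\sum_{n \leq X}g^{\ast}(n)$ with $g^{\ast} = g\,f_l\,\iota_{K,f}\,n^{-iu}$, $g \in \{1,\sg_f\}$, and $0 \leq l \leq R$ (with $l \geq M+1$ when $g = 1$). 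Here $|g^{\ast}(p)| = \Delta_K(p)\iota_{K,f}(p)$ — using $|f_l(p)| = \Delta_K(p)$, a consequence of $f_l(p)^D = \Delta_K(p) \in \{0,1\}$, and $|\sg_f(p)| = 1$ whenever $\iota_{K,f}(p) = 1$ — so the size hypotheses of Lemma~\ref{lem:Ten} hold with $\beta = \tau_K$ exactly as in the previous paragraph, and $\frac1X\sum_{n \leq X}|g^{\ast}(n)| = \frac1X\sum_{n \leq X}\Delta_K(n)\iota_{K,f}(n) \asymp_{K,f}\delta_K(X)$ by the $n \leq X$ analogue of \eqref{eq:Bd1}. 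Lemma~\ref{lem:Ten} then reduces the task to showing $m_{g^{\ast}}(X;T) \to \infty$ for a suitable $T = T(X) \to \infty$ with $T \leq X$. Unwinding $m_{g^{\ast}}(X;T)$, restricting to $\widehat{\mc{N}}_K$ (where $\Delta_K(p) = 1$), and absorbing the primes of $B_{K,f}$ into an $O_{K,f}(1)$ error (again since $\sum_{p \in B_{K,f}}1/p < \infty$), one obtains $m_{g^{\ast}}(X;T) \geq \min_{|t| \leq T}\mb{D}_K(g\,f_l, n^{i(u+t)};X)^2 - O_{K,f}(1)$. Since $u \in [-X,X]$ and $T \leq X$ keep $u+t$ in $[-2X,2X]$, I then invoke Lemma~\ref{lem:pretDistBd} when $g = \sg_f$ and Proposition~\ref{prop:MROdo}(iv) when $g = 1$ (which is why the latter case requires $M+1 \leq l \leq R$), each giving $\min_{|s| \leq 2X}\mb{D}_K(g\,f_l, n^{is};X)^2 \to \infty$. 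Taking e.g. $T = \log\log X$ also disposes of the $T^{-1/2}$ and $(\log X)^{-1}$ terms in Lemma~\ref{lem:Ten}, which yields \eqref{eq:Bd2} and \eqref{eq:Bd3}, uniformly for $u \in [-X,X]$.

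The routine work is the collection of partial-summation estimates. The part demanding care is the bookkeeping that turns $m_{g^{\ast}}(X;T)$ into $\min_{|s| \leq 2X}\mb{D}_K(g\,f_l, n^{is};X)^2$ up to $O_{K,f}(1)$ — keeping track of the $\iota_{K,f}$-restriction, of the finitely many ramified primes, and of the discrepancy between $\Delta_K(p)$ and $1$ on versus off $\widehat{\mc{N}}_K$ — together with recognising why \eqref{eq:Bd3} must omit $0 \leq l \leq M$: for those $l$ one has $f_l = \Delta_K$ on unramified primes, so $\mb{D}_K(f_l, 1; X) = 0$ and no cancellation is available when $g = 1$. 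The Euler-product comparison underlying \eqref{eq:Bd1} also needs a little attention but is otherwise standard.
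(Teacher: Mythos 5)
Your proof is correct and follows essentially the same route as the paper: Wirsing (Lemma~\ref{lem:Wir}) for \eqref{eq:Bd1} via the prime-sum input from Lemmas~\ref{lem: countNK} and~\ref{lem:iotaSupp}, and Tenenbaum (Lemma~\ref{lem:Ten}) for \eqref{eq:Bd2}, \eqref{eq:Bd3} via the pretentious distance lower bounds of Lemma~\ref{lem:pretDistBd} and Proposition~\ref{prop:MROdo}(iv), with $T$ growing slowly enough that $u+t$ stays in $[-2X,2X]$. The only real deviation is cosmetic: you fold \eqref{eq:Bd2} into the \eqref{eq:Bd3} framework by observing $\Delta_K\iota_{K,f} = f_0\iota_{K,f}$ from Proposition~\ref{prop:MROdo}(ii), whereas the paper treats $\sg_f\Delta_K n^{-iu}$ directly as its own case of $\phi$ and keeps the two mean values notationally separate, and you choose $T = \log\log X$ while the paper uses $T = (\log X)^2$; both are valid and yield the same conclusion.
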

\begin{proof}
Note that $0 \leq \Delta_K\iota_{K,f} \leq 1$, and combining Lemmas \ref{lem: countNK} and \ref{lem:iotaSupp} with partial summation we have
$$
\sum_{p \leq X} \frac{\Delta_K(p)\iota_{K,f}(p) \log p}{p} = \sum_{p \leq X} \frac{\Delta_K(p) \log p}{p} + O\left(\sum_{p \leq X} \frac{(1-\iota_{K,f}(p))\log p}{p}\right) = \tau_K \log X + O_{\e}((\log X)^{3/4+\e}).
$$
Hence, Lemma \ref{lem:Wir} applies, and we obtain
$$
\frac{1}{X}\sum_{X \leq n \leq 2X} \Delta_K(n) \iota_{K,f}(n) =
\left(\frac{e^{-\gamma \tau_K}}{\Gamma(\tau_K)} + o(1)\right) \prod_{p \leq X} \left(1-\frac{1}{p}\right) \left(1+\frac{\Delta_K(p) \iota_{K,f}(p)}{p}\right).
$$
By Mertens' theorem and Lemma \ref{lem:iotaSupp}, we obtain that
\begin{align*}
\prod_{p \leq X} \left(1-\frac{1}{p}\right)\left(1+\frac{\Delta_K(p)\iota_{K,f}(p)}{p}\right) \asymp \exp\left(\sum_{p \leq X} \frac{\Delta_K(p)\iota_{K,f}(p)-1}{p}\right) &= \exp\left(\sum_{p \leq X} \frac{\Delta_K(p)-1}{p} + O_{K,f}(1)\right) \\
&\asymp_{K,f} \prod_{p \leq X} \left(1+\frac{\Delta_K(p)-1}{p}\right),
\end{align*}
which in light of the previous estimate gives \eqref{eq:Bd1}. \\
Estimates \eqref{eq:Bd2} and \eqref{eq:Bd3} are consequences of Lemma \ref{lem:pretDistBd}. Indeed, we have $|\sg_f \Delta_K n^{-iu}|, |gf_l n^{-iu}| \leq \Delta_K$, with equality at primes except on the sparse set $B_{K,f}$. In the notation of Lemma \ref{lem:Ten}, taking $T = (\log X)^2$,
\begin{align*}
m_{\phi}(X;T) &= \min_{|t| \leq T} \sum_{p \leq X} \frac{\Delta_K(p) - \text{Re}(\phi(p)p^{-it})}{p} - O_{K,f}(1) \\
&\geq \min_{|t| \leq T} \mb{D}_K(\phi ,n^{i(t-u)};X)^2 - O_{K,f}(1) \ra \infty,
\end{align*}
as $X \ra \infty$ whenever $\phi = \sg_f \Delta_Kn^{-iu}$ or $\phi = g f_l n^{-iu}$, for $M+1 \leq l \leq R$ and $|u| \leq X$. Finally, if, more simply, $\phi = f_ln^{-iu}$ then by Proposition \ref{prop:MROdo} we have
$$
m_{\phi}(X;T) \geq \min_{|t| \leq 2X} \mb{D}_K(f_l,n^{it};X)^2 - O_{K,f}(1) \geq \rho \log\log X - O_{K,f}(1).
$$
Note that Lemma \ref{lem:Ten} applies since all of the functions in play are bounded by 1, and the lower bound on sums $\sum_{y \leq p \leq x} \Delta_K(p)/p$ is an immediate consequence of Lemma \ref{lem: countNK}. We thus get that each of the quantities on the LHS of \eqref{eq:Bd2} and \eqref{eq:Bd3} is bounded from above by
$$
\ll_{K,f} \left(\frac{1}{X} \sum_{n \leq X} \Delta_K(n)\right) \cdot \left(\frac{1+m_{\phi}(X;(\log X)^2)}{e^{m_{\phi}(X;(\log X)^2)}} + \frac{1}{\log X}\right) = o_{K,f}\left(\prod_{p \leq X} \left(1+\frac{\Delta_K(p)-1}{p}\right)\right),
$$
where in the last step the bracketed sum was estimated using Lemma \ref{lem:Wir}, just as in \eqref{eq:Bd1}. This completes the proof.
\end{proof}

Finally, let us note that as $\Delta_K(p) \in \{0,1\}$ we have
$$
H(\Delta_K;X) = \prod_{p \leq X} \left(1+\frac{(\Delta_K(p)-1)^2}{p}\right) = \prod_{p \leq X} \left(1+\frac{1-\Delta_K(p)}{p}\right)  = \delta_K(X)^{-1}.
$$
We will use this fact in what follows.

\begin{proof}[Proof of Theorem \ref{thm:CFNormForms}]
Let $\delta \in (0,1/4)$ be a small parameter. In light of the discussion at the beginning of this section, it will suffice to show that for all but $O(\delta X)$ choices of $x \in [X,2X]$ we obtain, for both $\eta \in \{-1,+1\}$, the positive lower bound
$$
\sum_{x \leq n \leq x+h} g_K(n)(|\sg_f(n)| + \eta \sg_f(n)) > 0.
$$
Since $|\sg_f(n)| +\eta \sg_f(n) \geq 0$ for $\eta \in \{-1,+1\}$, by positivity the latter sum is
$$
\geq \sum_{x \leq n \leq x + h} g_K(n) \iota_{K,f}(n)(|\sg_f(n)| + \eta \sg_f(n)) = \sum_{x \leq n \leq x+h} g_K(n)\iota_{K,f}(n)(1+\eta \sg_f(n)),
$$
since $\iota_{K,f}(n) = 0$ whenever $\sg_f(n) = 0$. 
By Proposition \ref{prop:MROdo} we may use the decomposition of $g_K$ as a linear combination of multiplicative functions to write this last sum as $\mc{L}(x) + \mc{S}(x)$, where we define
\begin{align*}
\mc{L}(x) &:= \sum_{0 \leq l \leq M} c_l \sum_{x \leq n \leq x+h} f_l(n)\iota_{K,f}(n)(1 + \eta \sg_f(n)) \\
&= \left(\sum_{0 \leq l \leq M} c_l\right) \sum_{x \leq n \leq x+h} \Delta_K(n)\iota_{K,f}(n)(1+\eta \sg_f(n)), \\
\mc{S}(x) &:= \sum_{M+1 \leq l \leq R} c_l \sum_{x \leq n \leq x+h} f_l(n)\iota_{K,f}(n)(1 + \eta \sg_f(n)),
\end{align*}
where we reexpressed $\mc{L}$ using the fact that $\iota_{K,f}(n) = 0$ unless $(n,\text{disc}(K/\mb{Q})) = 1$, in which case $f_l(n) = \Delta_K(n)$ for all $0 \leq l \leq M$. Combining Proposition \ref{prop:MROdo} with Lemmas \ref{lem:Ser} and \ref{lem:pretDistBd}, note that $\phi \in \mc{M}(X;\tau_K, 1/4-\eta, (1-3\eta)\tau_K/(12D^2)\}$ whenever $\phi = \sg_f f_l$ for any $0 \leq l \leq R$ and $\eta > 0$ is sufficiently small; when $\phi = f_l$ the same lemmas also imply that $\phi \in \mc{M}(X;\tau_K, 1/4-\eta,\rho)$.\\
Let us show first that 
$$
\mc{S}(x) \ll_{K,f} \delta h\prod_{p \leq X} \left(1+\frac{\Delta_K(p)-1}{p}\right) \text{ for all but $O_{K,f}(\delta X)$ choices of $x \in [X,2X]$.}
$$
To see this, we note first of all that for any $t_0 \in [-X,X]$ and $g \in \{1,\sg_f\}$, Lemma \ref{lem:longSumBounds} shows that for each $M+1 \leq l \leq R$,
$$
\frac{1}{X}\left|\sum_{X \leq n \leq 2X} f_l(n)g(n)\iota_{K,f}(n)n^{-it_0}\right| \ll \max_{Y \in [X,2X]} \frac{1}{Y}\left|\sum_{n \leq Y} f_l(n)g(n)n^{-it_0}\right| = o_{K,f}\left(\prod_{p \leq X}\left(1+\frac{\Delta_K(p)-1}{p}\right)\right).
$$
Moreover, choosing $X$ sufficiently large, $h_0$ sufficiently large in terms of $\delta$ and writing $h = h_0H(\Delta_K;X)$, we may combine this with Corollary \ref{cor:cardCloseSums} and the triangle inequality to deduce that for each $M+1 \leq l \leq R$, for all but $O(\delta X)$ choices of $x \in [X,2X]$ we have
\begin{align*}
&\left|\frac{1}{h}\sum_{x \leq n \leq x+h} f_l(n)g(n)\iota_{K,f}(n) \right| \\
&\ll \left|\frac{1}{h}\sum_{x \leq n \leq x+h} f_l(n)g(n)\iota_{K,f}(n) - \frac{1}{h}\int_x^{x+h} u^{it_0} du \cdot \frac{1}{X}\sum_{X \leq n \leq 2X} f_l(n)g(n)\iota_{K,f}(n) n^{-it_0}\right| \\
&+ \frac{1}{X}\left|\sum_{X \leq n \leq 2X} f_l(n)g(n)\iota_{K,f}(n)n^{-it_0}\right| \\
&\ll_{K,f} \delta \prod_{p \leq X} \left(1 + \frac{\Delta_K(p)-1}{p}\right),
\end{align*}
where in the above, $t_0 = t_0(f_l g;X)$. Combining all of the $R-M = O_K(1)$ exceptional sets in this way, we see that for all but $O_{K,f}(\delta X)$ choices of $x \in [X,2X]$, we obtain by the triangle inequality that
$$
|\mc{S}(x)| \leq \sum_{M+1 \leq l \leq R} |c_l| \sum_{g \in \{1,\sg_f\}} \left|\sum_{x \leq n \leq x+h} f_l(n)g(n)\iota_{K,f}(n)\right| \ll_{K,f} \delta h \prod_{p \leq X} \left(1+\frac{\Delta_K(p)-1}{p}\right).
$$
Next, write
\begin{align*}
\mc{L}(x) &= \left(\sum_{0 \leq l \leq M} c_l\right) \sum_{x \leq n \leq x+h} \Delta_K(n) \iota_{K,f}(n)(1+\eta \sg_f(n)) = \left(\sum_{0 \leq l \leq M} c_l\right) \left( \mc{L}_1(x) + \eta \mc{L}_2(x)\right). 
\end{align*}
Combining Lemma \ref{lem:longSumBounds} with Corollary \ref{cor:cardCloseSums} as above, we may deduce in the same way that for all but $O_{K,f}(\delta X)$ choices of $x \in [X,2X]$ we have
$$
|\mc{L}_2(x)| \ll \delta h \prod_{p \leq X} \left(1+\frac{\Delta_K(p)-1}{p}\right).
$$
Furthermore, if we set
$$
\tilde{\mc{L}}_1(X) := \sum_{X \leq n \leq 2X} \Delta_K(n) \iota_{K,f}(n)
$$
then by Corollary \ref{cor:cardCloseSums} we once again have (taking $t_0 = 0$ since $\Delta_K\iota_{K,f} \geq 0$) that for all but $O_{K,f}(\delta X)$ choices of $x \in [X,2X]$,
$$
|\mc{L}_1(x)| \geq \frac{h}{X} \tilde{\mc{L}}_1(X) - h\Bigg|\frac 1h \mc{L}_1(x) - \frac 1X \tilde{\mc{L}}_1(X) \Bigg| \geq \frac{h}{X} \tilde{\mc{L}}_1(X) - O_{K,f}\left(\delta h\prod_{p \leq X}\left(1+\frac{\Delta_K(p)-1}{p}\right)\right).
$$
Combining all of these facts together (and taking unions over exceptional sets), we deduce that for all but $O_{K,f}(\delta X)$ choices of $x \in [X,2X]$ we have, simultaneously for both $\eta \in \{-1,+1\}$,
\begin{align*}
&\sum_{x \leq n \leq x+h} g_K(n)\iota_{K,f}(n)(1+\eta \sg_f(n)) \\
&\geq \left(\sum_{0 \leq l \leq M} c_l\right) \cdot \frac{h}{X} \sum_{X \leq n \leq 2X} \Delta_K(n)\iota_{K,f}(n) - O\left(\delta h \prod_{p \leq X} \left(1+\frac{\Delta_K(p)-1}{p}\right)\right) \\
&\geq \left(c-O_{K,f}(\delta)\right) h \prod_{p \leq X} \left(1+\frac{\Delta_K(p)-1}{p}\right),
\end{align*}
for some $c = c(K,f) > 0$, where in the last estimate we used \eqref{eq:Bd1}. Choosing $\delta$ sufficiently small as a function of $K$ and $f$, we thus obtain 
$$
\sum_{x \leq n \leq x+h} g_K(n) \iota_{K,f}(n)(1+\eta \sg_f(n)) \gg_{K,f} h \prod_{p \leq X}\left(1+\frac{\Delta_K(p)-1}{p}\right),
$$
for $\geq (1-O_{K,f}(\delta))X \geq 3X/4$ choices of $x \in [X,2X]$. Since now $h_0$ was simply chosen sufficiently large in terms of $\delta$, we can take it of some large size $O_{K,f}(1)$, in which case $h = h_0H(\Delta_K;X) \asymp_{K,f} \delta_K(X)^{-1}$, and the claim follows.
\end{proof}

\section{Proof of Theorem \ref{thm:consecSoTS}} \label{sec:shiftedSoTS}
Fix an integer $a \neq 0$. The proof of Theorem \ref{thm:consecSoTS} relies on two propositions.

\begin{prop}\label{prop:smallrnCorr}
Let  $\delta > 0$ and suppose $X^{\e} < h \leq X^{1-\e}$. Then for all but $O(X/(\log \log X)^{1/10})$ choices of $x \in [X,2X]$ we have 
$$
\sum_{\ss{x < n \leq x+ h \\ 0 < |\lambda_f(n)| < X^{-\delta}}} r(n+a) \ll_{\delta,f} \frac{h}{(\log\log X)^{1/10}}.
$$
\end{prop}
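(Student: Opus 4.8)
The plan is to combine a ``Diophantine'' structural fact about the rare integers $n$ with $0<|\lambda_f(n)|<X^{-\delta}$, the sparsity of primes where $\lambda_f$ is small, and control of the divisor-type weight $r(n+a)$ along the resulting thin set.

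\emph{Reduction.} Let $d$ be the degree of the Hecke eigenvalue field of $f$. If $\lambda_f(m)\neq 0$ then $a_f(m)=\lambda_f(m)m^{(k-1)/2}$ is a nonzero algebraic integer whose conjugates are the $m$-th Fourier coefficients of the conjugate newforms, so Deligne's bound $|a_{f^{\sigma}}(m)|\le\tau(m)m^{(k-1)/2}$ applies to all of them; taking the norm gives $|\lambda_f(m)|\ge\tau(m)^{-(d-1)}m^{-(k-1)d/2}$, hence $-\log|\lambda_f(p^{\nu})|\le C_f\log p^{\nu}$ whenever $\lambda_f(p^{\nu})\neq 0$. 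Fixing $c_1\in(0,1)$ and putting $T_0=(\log X)^{c_1}$, I would use $-\log|\lambda_f(n)|=\sum_{p^{\nu}\Vert n}(-\log|\lambda_f(p^{\nu})|)$, the bounds $\log\tau(n)=o(\log X)$ and $\omega(n)\ll\log X/\log\log X$, and the Diophantine estimate to bound the contribution of prime powers $p^{\nu}\le T_0$ by $O_f((\log X)^{c_1})=o(\log X)$; then, choosing $c_2=c_2(\delta,f)<\delta/4$, if no $q\mid n$ with $q>T_0$ satisfied $|\lambda_f(q)|<(\log X)^{-c_2}$ the total would be at most $2c_2\log X+o(\log X)<\delta\log X$, a contradiction. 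So every $n\le 3X$ with $0<|\lambda_f(n)|<X^{-\delta}$ has a prime power divisor $q>T_0$ with $|\lambda_f(q)|<(\log X)^{-c_2}$; call such $q$ \emph{bad}. The condition $|\lambda_f(p)|<(\log X)^{-c_2}$ places $\theta_p$ in an interval of width $\asymp(\log X)^{-c_2}$ about $\pi/2$, so Thorner's effective Sato--Tate theorem \cite{Tho}, used dyadically together with partial summation, gives $\sum_{q\text{ bad},\,q>T_0}1/q\ll(\log\log X)^{-\kappa}$ for some $\kappa=\kappa(f)>0$; this is what fixes the exponent $1/10$ in the statement.

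\emph{The main estimate.} By the reduction, $\sum_{x<n\le x+h,\,0<|\lambda_f(n)|<X^{-\delta}}r(n+a)\le\sum_{q\text{ bad},\,q>T_0}\sum_{x<m\le x+h,\,m\equiv a\,(q)}r(m)$ (with $m=n+a$). Averaging over $x\in[X,2X]$ replaces $\sum_{x<m\le x+h}$ by $\tfrac{h}{X}\sum_{X/2<m\le3X}$, and I would split the $q$-sum at $X^{1-\varepsilon}$. For $q=p^{\nu}\le X^{1-\varepsilon}$, Shiu's theorem applied to $r\le 4\tau$ (using $q/\varphi(q)=O(1)$ for a prime power and factoring out $\gcd(a,q)=O_a(1)$) gives $\sum_{X/2<m\le3X,\,m\equiv a\,(q)}r(m)\ll_a X/q$, so these $q$ contribute $\ll_a h\sum_{q\text{ bad},\,q>T_0}1/q\ll_a h(\log\log X)^{-\kappa}$. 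For $q>X^{1-\varepsilon}$ the progression meets $(X/2,3X]$ in only $O(X^{\varepsilon})$ points, so writing $n=q\ell$ with $\ell\le 3X^{\varepsilon}$ reduces that part to $\sum_{\ell\le 3X^{\varepsilon}}\sum_{q\text{ bad prime},\,q\ell\asymp X}r(q\ell+a)$, and I would bound the inner sum by $\ll(\log X)^{-\kappa'}(X/\ell)(\log\log X)^{O(1)}/\log X$, i.e.\ show that the Sato--Tate constraint on $\theta_q$ does not correlate with the divisor weight; summing this against $\sum_{\ell\le 3X^{\varepsilon}}1/\ell\ll\varepsilon\log X$ yields $o_{\delta,f}(h/(\log\log X)^{1/10})$. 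A final application of Markov's inequality to the averaged bound then produces the exceptional set $O(X/(\log\log X)^{1/10})$.

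\emph{Where the difficulty lies.} The routine inputs are the Diophantine reduction, the Sato--Tate sparsity, and the small-modulus estimate via Shiu. The hard part will be the large-modulus case $q>X^{1-\varepsilon}$: one needs $\sum_{q\text{ bad prime},\,q\ell\asymp X}r(q\ell+a)$ to be of the expected size (number of bad $q$)$\times$(mean value of $r$ on the progression $m\equiv a\pmod{\ell}$, which is $O(\log\log X)$). I would attack this by expanding $r=4(1\ast\chi_4)$ and localizing the divisor $e$ of $q\ell+a$: for $e\le(\log X)^{B}$ use a Siegel--Walfisz-type equidistribution of $\theta_q$ over arithmetic progressions of modulus $\le(\log X)^{B}$ — available from the modularity of all $\mathrm{Sym}^{m}\rho_f$ (Newton--Thorne \cite{NeTh}) plus standard zero-free regions for the twisted $L$-functions $L(s,\mathrm{Sym}^{m}\rho_f\otimes\psi)$ — and for $e>(\log X)^{B}$ exploit the cancellation in $\sum\chi_4$ via a bilinear (Vinogradov-type) estimate, after replacing $1_{\theta_q\in I}$ by a Beurling--Selberg majorant so that the Sato--Tate condition becomes a bounded linear combination of $\mathrm{Sym}^{2m}$-Hecke eigenvalues. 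Keeping these estimates uniform in $\ell\le 3X^{\varepsilon}$ and in the (possibly very small) width of the interval $I$ will be the technical heart of the proof.
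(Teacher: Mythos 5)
Your overall architecture — a Diophantine reduction showing that $0<|\lambda_f(n)|<X^{-\delta}$ forces a ``bad'' prime-power divisor $q\mid n$ with $q$ large and $|\lambda_f(q)|<(\log X)^{-c}$, followed by Thorner's effective Sato--Tate theorem to make the bad $q$'s sparse, Shiu's theorem for the contribution of small moduli, and Markov's inequality to pass from the dyadic average to a bound for most $x$ — agrees with the paper's proof (which is carried out via Lemma~\ref{lem:verySmallLambda} and Lemma~\ref{lem:NairTenBd}). The paper splits the bad prime powers into three ranges $q\le h^{1-\e}$, $h^{1-\e}<q\le X^{1-\e}$, $q>X^{1-\e}$, handling the first pointwise via Shiu and the second by averaging plus Chebyshev, which is essentially your ``small-modulus'' treatment; the conclusions there are the same.

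The divergence — and the step you flag as ``the technical heart'' — is the large-modulus range $q>X^{1-\e}$. You propose to evaluate $\sum_{\ell}\sum_{q\ \text{bad},\ q\ell\asymp X}r(q\ell+a)$ by expanding $r=4(1\ast\chi_4)$, proving Siegel--Walfisz equidistribution of $\theta_q$ in arithmetic progressions via the automorphy of $\mathrm{Sym}^m\rho_f\otimes\psi$, inserting a Beurling--Selberg majorant, and finishing with a bilinear estimate, all uniformly in $\ell$ and in the width of the Sato--Tate window. None of this is needed, and the machinery you invoke is far heavier than the paper's. The paper's observation is that you do not have to bound this contribution at all: since $h\le X^{1-\e}$ and $q>X^{1-\e}$, an interval $(x,x+h]$ contains at most one multiple of $q$, while the total number of such bad $q$ up to $2X$ is $\ll X/(\log X)^{1+c'}$ by Thorner. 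So the set of $x\in[X,2X]$ whose interval sees any such multiple is small, and one simply discards these $x$ into the exceptional set allowed by the proposition (which is $O(X/(\log\log X)^{1/10})$). Because the statement is ``for all but $O(\cdot)$ choices of $x$,'' you are free to excise the rare problematic intervals rather than estimate the correlation on them. Absorbing this point would let you drop the entire bilinear/majorant program and also removes the need for any equidistribution of $\theta_q$ in progressions. The rest of your argument (the lower bound $|\lambda_f(p^\nu)|\ge p^{-C_f\nu}$ from norm forms of the Hecke field, the $\omega(n)\ll\log X/\log\log X$ bound to produce a single bad $q$, and the dyadic partial-summation bound $\sum_{q\ \text{bad}}1/q\ll(\log\log X)^{-\kappa}$) matches the paper in substance.
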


\begin{prop}\label{prop:lowBdCorrel}
Let $X^{1/3 + \e} < h \leq X$. Then for all but $O(X/(\log \log X)^{1/10})$ choices of $x \in [X,2X]$ we have
$$
\sum_{\ss{x < n \leq x + h \\ \lambda_f(n) \neq 0}} r(n+a) \gg_f h.
$$
\end{prop}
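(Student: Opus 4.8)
The plan is to obtain the lower bound by a Brun-type lower-bound sieve combined with the Gauss circle problem in arithmetic progressions, the exponent $1/3$ in the hypothesis being exactly the size of the error term in the latter. First one replaces the condition $\lambda_f(n)\neq 0$ by a genuine prime-set sieve condition: as $\lambda_f$ is multiplicative and $\lambda_f(p^\nu)$ is a value of a Chebyshev polynomial at $\cos\theta_p$, it vanishes for some $\nu\geq 1$ exactly when $\theta_p/\pi\in\mb{Q}$, so by Lemma \ref{lem:Ser} (for the angle $\pi/2$), Thorner's effective Sato--Tate theorem \cite{Tho} and the integrality of the $a_f(p)$ (which restricts the remaining admissible angles to a finite list) the set $\mc{P}:=\{p:\lambda_f(p^\nu)=0\text{ for some }\nu\}$ has $\sum_{p\in\mc{P}}1/p<\infty$. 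If $n$ has no prime factor in $\mc{P}$ then every local factor $\lambda_f(p^{v_p(n)})$ is nonzero, so $\lambda_f(n)\neq 0$. Fixing a level $z=z(X)\to\infty$ (a small power of $\log X$ suffices) and Brun lower-sieve weights $\lambda^-_d$, $|\lambda^-_d|\leq 1$, supported on squarefree $d\leq z$ composed of primes of $\mc{P}$, one gets
$$
\sum_{\substack{x<n\le x+h\\ \lambda_f(n)\neq 0}}r(n+a)\ \ge\ \sum_{d\le z}\lambda^-_d\!\!\sum_{\substack{x<n\le x+h\\ d\mid n}}\!\!r(n+a)\ -\ \sum_{\substack{p\in\mc{P}\\ p>z}}\ \sum_{\substack{x<n\le x+h\\ p\mid n}}r(n+a).
$$

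For fixed $d$ the inner sum in the first term counts lattice points $(u,v)$ with $x+a<u^2+v^2\le x+h+a$ and $u^2+v^2\equiv a\pmod d$; by the circle problem in arithmetic progressions (equivalently, Vorono\"i summation for $r$ twisted by additive characters mod $d$) it equals $\pi h\,g(d)+O\big(X^{1/3}d^{O(1)}\big)$, uniformly in $x$, where $g(d)=d^{-2}\#\{(u,v)\bmod d:u^2+v^2\equiv a\}$ is multiplicative with $g(p)=\tfrac1p+O(\tfrac1{p^2})$ for $p\nmid 2a$. Since $h>X^{1/3+\e}$ dominates the aggregate error $\ll z^{O(1)}X^{1/3}$, summing over $d\le z$ and invoking the fundamental lemma of the sieve (the density $g$ having ``dimension $0$'', as $\sum_{p\in\mc{P}}g(p)<\infty$) gives $\sum_{d\le z}\lambda^-_d\sum_{d\mid n}r(n+a)\gg_f h$. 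Hence it remains to show that the second (``leakage'') term is at most a small constant multiple of $h$ for all but $O(X/(\log\log X)^{1/10})$ of $x\in[X,2X]$, which by a first-moment argument reduces to proving $\sum_{n\le X,\ \exists\,p\in\mc{P}\cap(z,X],\ p\mid n}r(n+a)=o(X)$.

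The main obstacle is exactly this last estimate. The higher-prime-power part and the prime range $z<p\le X^{1/3}$ are routine: there one has the asymptotic $\sum_{n\le X,\,p\mid n}r(n+a)\asymp X/p$ with error $O(\sqrt{pX}\log p)$, and summing this against the sparse set of bad primes furnished by Lemma \ref{lem:Ser} is $o(X)$. The delicate range is $p>X^{1/3}$: here $\#\{n\le X:\exists p\in\mc{P},\,p>X^{1/3},\,p\mid n\}\ll X(\log X)^{-1/4+\e}$ is only $X$ divided by a power of $\log X$, so size bounds alone — even Cauchy--Schwarz, which loses a factor $\log X$ coming from the second moment of $r$ — yield merely $O(X)$, not $o(X)$. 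One must instead exploit the factorisation $\tfrac14 r=1\ast\chi_4$ to extract cancellation in the $\chi_4$-twisted sums over the shifted multiples $pm+a$ with $p$ ranging over $\mc{P}_0=\{p:\lambda_f(p)=0\}$, which in turn rests on equidistribution of $\mc{P}_0$ in arithmetic progressions with a power-of-$\log$ error rate, available from the effective Chebotarev--Sato--Tate input underlying Lemma \ref{lem:CST} together with Serre's bound. This, combined with the $X^{1/3}$-sized error in the circle problem, is the source both of the hypothesis $h>X^{1/3+\e}$ and of the exceptional set appearing in the statement.
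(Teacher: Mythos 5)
Your overall strategy is close to the paper's: restrict to $n$ with no small prime factor in $\mc{P}:=\{p:\lambda_f(p^\nu)=0\text{ for some }\nu\}$, estimate the sifted main term via the circle problem in arithmetic progressions (the paper uses Tolev's bound $O(y^\e(q^{1/2}+y^{1/3}))$, which is exactly why the hypothesis is $h>X^{1/3+\e}$), and handle the ``leakage'' from large bad prime(-power) divisors by a first-moment argument. Since $\sum_{p\in\mc{P}}1/p<\infty$, the paper simply takes $P=\prod_{3\le p\le z,\,p\in\mc{P}}p$ with $z=(\log X)^{1/2}$, so $P\ll X^\e$, and uses \emph{full} M\"obius inversion over $d\mid P$ rather than a Brun truncation; your Brun-sieve version with ``dimension $0$'' density also works, though it is a heavier tool than needed here.

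However, your analysis of the leakage term contains a genuine gap. You assert that the range $p>X^{1/3}$ is the ``delicate'' range, that a cardinality bound plus Cauchy--Schwarz gives only $O(X)$, and that one must extract cancellation from the factorisation $\tfrac14 r=1*\chi_4$ over shifted multiples $pm+a$ with $p\in\mc{P}_0$. This is a misdiagnosis: no such cancellation is needed, and the $\chi_4$-twisted argument you gesture at is neither substantiated nor (so far as the evidence in the paper suggests) necessary. The correct move is to interchange the order of summation and apply Shiu's theorem as an \emph{upper} bound for each bad prime power individually (Lemma \ref{lem:NairTenBd}, and its long-interval variant used in the proof of Proposition \ref{prop:smallrnCorr}): for any $p^\nu\le X^{1-\e}$ one has
$\sum_{X<n\le 4X,\ p^\nu\| n} r(n+a)\ll X/p^\nu$,
and then by Lemma \ref{lem:verySmallLambda},
$$
\sum_{\substack{z<p^\nu\le X^{1-\e}\\ \lambda_f(p^\nu)=0}}\ \sum_{\substack{X<n\le 4X\\ p^\nu\| n}} r(n+a)\ \ll\ X\sum_{\substack{p^\nu>z\\ \lambda_f(p^\nu)=0}}\frac{1}{p^\nu}\ \ll_f\ \frac{X}{(\log z)^{1/5}}=o(X),
$$
which feeds into the Markov/first-moment step exactly as in Proposition \ref{prop:smallrnCorr}. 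The tail $p^\nu>X^{1-\e}$ is dealt with separately by counting that there are $\ll X/(\log X)^{1+c'}$ such prime powers, each meeting any short interval at most once, so only a negligible set of $x$ need be discarded. In other words, the cardinality-then-Cauchy--Schwarz route you considered is simply the wrong way to bound the sum; switching the order of summation dissolves the difficulty. As written, your proposal identifies a nonexistent obstacle and offers a speculative, unexecuted fix, so it does not close the argument.

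Two minor points: (i) the paper uses the \emph{exact} divisibility condition $p^\nu\| n$ in the leakage term, which avoids overcounting by prime powers and is cleaner than the $p\mid n$ union bound you use (though the latter is also fine for an upper bound); (ii) for the main term the paper also inserts the parity condition $2\nmid n$ and separates out $\gcd(a,P)$ when applying Tolev's result, since it requires $(a,q)=1$ — a small bookkeeping step your sketch elides.
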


\begin{proof}[Proof of Theorem \ref{thm:consecSoTS} assuming Propositions \ref{prop:smallrnCorr} and \ref{prop:lowBdCorrel}]
Let $h > X^{1/2+3\e}$ and let $\eta \in (0,1/4)$ be a small parameter. Assume for the sake of contradiction that 
$$
\#\{x \in [X,2X] : \exists n \in [x,x+h] \text{ with } n \in a+\mc{N} \text{ and } \lambda_f(n) < 0\}  \leq \eta X
$$
(the corresponding case where $\lambda_f(n) > 0$ on $[x,x+h]$ is completely similar). Thus, for all but $\leq \eta X$ choices of $x \in [X,2X]$, we have that $\lambda_f(n)r(n-a)$ has the same sign for all $n \in [x,x+h]$. For such an $x$, 
$$
\sum_{x < n \leq x+h} |\lambda_f(n)| r(n-a) = \sum_{x < n \leq x+h} \lambda_f(n)r(n-a) \ll \max_{X < y \leq 3X} \left|\sum_{n \leq y} \lambda_f(n)r(n-a)\right|.
$$
By a shifted convolution sum estimate of Ravindran \cite{Rav}, we obtain
$$
\left|\sum_{x < n \leq x+h} \lambda_f(n)r(n-a)\right| \ll_{\e} X^{1/2+\e}.
$$
On the other hand, for any small $\delta >0$ we have
\begin{align*}
&\sum_{x < n \leq x+ h} |\lambda_f(n)| r(n-a) \\
&\geq X^{-\delta} \sum_{\ss{x < n \leq x+ h \\ |\lambda_f(n)| \geq X^{-\delta}}} r(n-a) = X^{-\delta}\left(\sum_{\ss{x < n \leq x+ h \\ \lambda_f(n) \neq 0}} r(n+a) - \sum_{\ss{x < n \leq x + h \\ 0 < |\lambda_f(n)| < X^{-\delta}}} r(n-a)\right).
\end{align*}
By Propositions \ref{prop:smallrnCorr} and \ref{prop:lowBdCorrel}, respectively, we have
\begin{align*}
\sum_{\ss{x < n \leq x + h \\ 0 < |\lambda_f(n)| < X^{-\delta}}} r(n-a) \ll_{\delta,f} \frac{h}{(\log\log X)^{1/5}}, \quad \quad  
\sum_{\ss{x < n \leq x+ h \\ \lambda_f(n) \neq 0}} r(n-a) \gg_f h
\end{align*}
for all but $O(X/(\log \log X)^{1/10})$ choices of $x \in [X,2X]$. \\
It therefore follows that if $X \geq X_0(\delta,\eta)$ then for all but $\leq 2\eta X$ choices of $x \in [X,2X]$ we have
$$
h \ll_{\delta,f} X^{\delta}\sum_{x < n \leq x+h} |\lambda_f(n)|r(n-a) \ll_{\e} X^{1/2+ \delta + \e}.
$$
Choosing $\delta = \e$ sufficiently small and $X$ larger in terms of $f$ and $\e$ if necessary, we deduce that $h \ll_{\e,f} X^{1/2+2\e}$, which is a contradiction for $X$ large enough. \\
Thus, there is a constant $c > 0$ such that for $\geq cX$ choices of $x \in [X,2X]$ $\lambda_f$ changes sign on $(a+\mc{N}) \cap [x,x+h]$. By the greedy selection argument mentioned at the beginning of Section \ref{sec:MainArg}, it follows that there are $\gg X/h \gg_{\e,f} X^{1/2-3\e}$ distinct such $n$, all of which produce a sign change, and the theorem is proved.
\end{proof}


\subsection{Proof of Proposition \ref{prop:smallrnCorr}}
In order to prove Proposition \ref{prop:smallrnCorr} we will deduce from the condition $0 < |\lambda_f(n)| < X^{-\delta}$ that $n$ is divisible by a \emph{large} prime power $p^k$ with a \emph{small} Fourier coefficient $|\lambda_f(p^k)|$. The following lemma plays an important role in showing that the number of multiples of all such prime powers is quite small.

\begin{lem} \label{lem:verySmallLambda}
Let $c \in (0,1/4)$ be fixed and let $2 \leq Z \leq X$. Then 
$$
\sum_{\ss{Z \leq p^\nu \leq X \\ 0 < |\lambda_f(p^\nu)| \leq (\log X)^{-c}}} \frac{1}{p^\nu} \ll_f \frac{\log\log X}{(\log X)^c} + \frac{1}{(\log (Z+\log X))^{1/5}}.
$$
Similarly, we have
$$
\sum_{\ss{Z \leq p^{\nu} \leq X \\ \lambda_f(p^{\nu}) = 0}} \frac{1}{p^{\nu}} \ll_f \frac{1}{(\log Z)^{1/5}}.
$$
\end{lem}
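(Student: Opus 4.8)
The plan is to derive both estimates from two arithmetic inputs --- a Liouville-type lower bound for nonzero Fourier coefficients at prime powers, and quantitative equidistribution of the angles $\theta_p$ --- after which everything reduces to elementary manipulation. The Diophantine input is this: writing $a_f(n) = \lambda_f(n)n^{(k-1)/2}$, the number $a_f(p^\nu)$ is an algebraic integer in the Hecke field $K_f$ of $f$, of fixed degree $d = [K_f:\mb{Q}]$, and every archimedean conjugate of $a_f(p^\nu)$ is bounded by $(\nu+1)p^{\nu(k-1)/2}$ by Deligne's bound (applied to the conjugate forms $f^{\sigma}$). Hence, if $\lambda_f(p^{\nu}) \neq 0$ then $N_{K_f/\mb{Q}}(a_f(p^{\nu}))$ is a nonzero rational integer, so has absolute value $\geq 1$, and dividing through by the remaining conjugates and by $p^{\nu(k-1)/2}$ gives
$$
|\lambda_f(p^{\nu})| \geq (p^{\nu})^{-B}, \qquad B = B(f) := (d-1) + \tfrac{(k-1)d}{2},
$$
using $\nu + 1 \leq p^{\nu}$. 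In particular $0 < |\lambda_f(p^{\nu})| \leq (\log X)^{-c}$ forces $p^{\nu} \geq (\log X)^{c/B}$; this is the mechanism converting ``small but nonzero coefficient'' into ``supported on large prime powers''.

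For the first estimate I would split the range $Z \leq p^{\nu} \leq X$ according to whether $\nu = 1$ or $\nu \geq 2$. Since, by the previous paragraph, every surviving term has $p^{\nu} \geq \max\{Z, (\log X)^{c/B}\}$, the elementary tail bound $\sum_{\nu \geq 2,\, p^{\nu} \geq Y} p^{-\nu} \ll Y^{-1/2}$ shows that the $\nu \geq 2$ contribution is
$$
\ll \min\{ Z^{-1/2},\ (\log X)^{-c/(2B)} \} \ll \frac{1}{(\log(Z + \log X))^{1/5}},
$$
using the first bound when $Z \geq \log X$ and the second when $Z < \log X$ (in either regime a power saving dominates a fixed power of a logarithm). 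For $\nu = 1$ I use Thorner's effective Sato--Tate theorem \cite{Tho} in the form: for $\xi \in (0,1)$ and $t \geq 2$,
$$
|\{ p \leq t : |\lambda_f(p)| \leq \xi \}| \ll_f \pi(t)\left( \xi + (\log t)^{-\eta} \right)
$$
for some $\eta = \eta(f) > 0$ (the admissible set of angles is an interval about $\pi/2$ of Sato--Tate measure $\ll \xi$, which one may enlarge to an interval of length $(\log t)^{-\eta}$ when that is bigger without affecting the bound). Taking $\xi = (\log X)^{-c}$, noting the summand vanishes for $p < (\log X)^{c/B}$, and applying partial summation over $\max\{Z, (\log X)^{c/B}\} \leq p \leq X$ then gives
$$
\sum_{\ss{Z \leq p \leq X \\ 0 < |\lambda_f(p)| \leq (\log X)^{-c}}} \frac{1}{p} \ll_f \frac{\log\log X}{(\log X)^c} + \big( \log\max\{ Z, (\log X)^{c/B} \} \big)^{-\eta}.
$$
Since $\log\max\{Z, (\log X)^{c/B}\} \gg_f \log(Z + \log X)$ (checking the cases $Z \geq \log X$ and $Z < \log X$, using $c/B < 1$), and since Thorner's theorem supplies such a bound with an exponent $\eta \geq 1/5$, the right-hand side is $\ll_f \frac{\log\log X}{(\log X)^c} + (\log(Z+\log X))^{-1/5}$, which is the claim.

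The second estimate is easier. Splitting off $\nu \geq 2$, the same elementary tail bound gives a contribution $\ll Z^{-1/2} \ll (\log Z)^{-1/5}$ with no arithmetic input at all. For $\nu = 1$, Serre's Lemma \ref{lem:Ser} with (say) $\e = 1/20$ gives $|\{p \leq t : \lambda_f(p) = 0\}| \ll_f t (\log t)^{-5/4 + 1/20}$, so partial summation over $Z \leq p \leq X$ yields
$$
\sum_{\ss{Z \leq p \leq X \\ \lambda_f(p) = 0}} \frac{1}{p} \ll_f (\log Z)^{-1/4 + 1/20} = (\log Z)^{-1/5},
$$
and adding the two contributions completes the proof.

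The main obstacle --- the only point that is not bookkeeping --- is the honest invocation of Thorner's quantitative Sato--Tate theorem: one needs the error term to save a genuine power of $\log t$, uniformly for intervals as short as $(\log X)^{-c}$ and for $t$ as small as a fixed power of $\log X$, and one must confirm that the resulting exponent is at least $1/5$. A secondary point meriting care is the Diophantine lower bound for $|\lambda_f(p^{\nu})|$, which --- through the degree of the Hecke field and Deligne's bound --- is precisely what makes the reduction to large prime powers possible; everything else (the elementary prime-power tail estimate, and the comparison of $\max\{Z,(\log X)^{c/B}\}$ with $Z + \log X$) is routine.
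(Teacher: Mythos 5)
Your proposal is correct and follows essentially the same route as the paper: the Diophantine lower bound $|\lambda_f(p^\nu)| \geq (p^\nu)^{-B}$ via norms in the Hecke field forces the support onto large prime powers, Thorner's quantitative Sato--Tate plus partial summation handles the $\nu = 1$ contribution, and Serre's density bound gives the second estimate. The one place you are cleaner than the paper is in explicitly separating $\nu \geq 2$ and dispatching it with the elementary tail bound $\sum_{\nu \geq 2,\, p^\nu \geq Y} p^{-\nu} \ll Y^{-1/2}$, whereas the paper applies Thorner directly to prime powers $p^\nu$ (informally, since the theorem is stated for primes); this also lets you avoid invoking Serre in the first estimate, where the paper needlessly subtracts the $\lambda_f(p^\nu)=0$ set.
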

\begin{proof}
By Thorner's quantitative version of the Sato-Tate theorem \cite[Thm. 1.1]{Tho}, for any $\Delta > 0$ we have
$$
|\{p^\nu \leq X : |\lambda_f(p^\nu)| < \Delta\}| = \pi(X) \left(\mu_{ST}([-\Delta,\Delta]) + O\left(\frac{\log(k \log X)}{\sqrt{\log X}}\right)\right).
$$
By Lemma \ref{lem:Ser}, we also have 
$$
|\{p^\nu \leq X : \lambda_f(p^\nu) = 0\}| \ll_f \frac{1}{(\log X)^{1/4-\e}}.
$$
Applying these two results with $\Delta = (\log X)^{-c}$, $0 < c < 1/4$, then using partial summation, for any $2 \leq Y \leq X$ we obtain
\begin{align*}
\sum_{\ss{Y < p^\nu \leq X \\ 0 < |\lambda_f(p^\nu)| < (\log X)^{-c}}} \frac{1}{p^\nu} &= \mu_{ST}([-\Delta,\Delta]) \log\left(\log X/\log Y\right) + O\left(\int_Y^X \frac{du}{u(\log u)^{6/5}}\right) \\
&= \mu_{ST}([-\Delta,\Delta]) \log\left(\log X/\log Y\right) + O\left(\frac{1}{(\log Y)^{1/5}}\right) \\
&\ll \frac{\log\log X}{(\log X)^{c}} + \frac{1}{(\log Y)^{1/5}}.
\end{align*}
Now, it is known that there is a constant $C_f > 0$ such that if $\lambda_f(p^\nu) \neq 0$ then $|\lambda_f(p^\nu)| \geq p^{-C_f\nu}$. Indeed, as argued in \cite{LuRaSh}, writing $a_f(n) := \lambda_f(n)n^{(k-1)/2}$ for all $n$ we have that $a_f(n)$ is an algebraic integer in some number field $K_f$ of (finite) degree $d_f \geq 1$, and so for each $n = p^\nu$ with $a_f(n) \neq 0$ its norm $N_{K_f}(a_f(p^\nu)) \geq 1$. On the other hand, for each automorphism $\sg$ of $K_f$ it is well-known that $\{a_f(n)^{\sg}\}_n$ is the sequence of coefficients of an eigencusp form of weight $k$ and level $1$, and we obtain
$$
1 \leq N_{K_f}(a_f(p^\nu)) = |a_f(p^{\nu})| \prod_{\substack{\sg \in \text{Aut}(K_f) \\ \sg \neq 1}} |a_f(p^\nu)^\sg| \leq |\lambda_f(p^\nu)| d(p^{\nu})^{d_f-1} p^{\nu d_f(k-1)/2},
$$
using Deligne's bound $|a_f(p^{\nu})^{\sg}| \leq d(p^{\nu}) p^{\nu(k-1)/2}$ for each $\sg \neq 1$ in the last inequality. From this, the claim $|\lambda_f(p^\nu)| \geq p^{-C_f \nu}$ follows with any $C_f > d_f(k-1)/2$.  \\
Therefore, if $0 < |\lambda_f(p^\nu)| < (\log X)^{-c}$ then $p^{\nu} > (\log X)^{\eta}$ with $\eta := c/C_f$, and so we immediately have that
$$
\sum_{\ss{Z < p^\nu \leq X \\ 0 < |\lambda_f(p^\nu)| < (\log X)^{-c}}} \frac{1}{p^\nu} = \sum_{\ss{\max\{(\log X)^{\eta},Z\} < p^\nu \leq X \\ 0 < |\lambda_f(p^\nu)| < (\log X)^{-c}}} \frac{1}{p^\nu} \ll \frac{1}{(\log(Z + (\log X)^{\eta}))^{1/5}},
$$
and the first claim follows. \\
The second claim is proven more directly by appealing to Lemma \ref{lem:Ser} and partial summation.
\end{proof}

To leverage the bound in Lemma \ref{lem:verySmallLambda} we require the following upper bound for correlations of $r(n)$ supported on multiples of prime powers $p^k$, provided they are not too close to $h$ in size.

\begin{lem}\label{lem:NairTenBd}
Let $X^{\e} \leq h \leq X$. Then for any $p^\nu \leq h^{1-\e}$ we have
$$
\sum_{\ss{x < n \leq x+h \\ p^\nu || n}} r(n+a) \ll_{\e} \frac{h}{p^\nu}.
$$
\end{lem}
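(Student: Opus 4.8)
The plan is to reduce the inner sum to a short average of the non-negative multiplicative function $g := \tfrac14 r$ along the values of a linear form, and then quote the Nair--Tenenbaum theorem on averages of multiplicative functions over polynomial sequences. First I would discard the condition $p^{\nu+1}\nmid n$ (harmless since $r\ge 0$) and substitute $n = p^\nu m$, turning the sum into $4\sum_{M<m\le M+H} g(p^\nu m + a)$ with $M := x/p^\nu$ and $H := h/p^\nu$. The point of passing to $g = \tfrac14 r$ rather than $r$ itself is that $g$ is ``correctly normalised'': $g$ is multiplicative with $g(q^k)\le k+1\le 2^k$, $g(n)\le d(n)\ll_\e n^\e$, and $\sum_{q\le y}\tfrac{g(q)}{q}\sim\log\log y$, so $g$ lies in the class handled by Nair--Tenenbaum and the resulting bound will be of the right order of magnitude; working directly with $r$ (for which $\sum_{q\le y} r(q)/q\sim 4\log\log y$) would cost a spurious power of $\log X$.

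Next I would record the arithmetic of the linear form $Q(m)=p^\nu m + a$. Assuming $p\nmid a$ — the finitely many primes dividing $a$ being absorbed into the implied constant after pulling out the fixed (bounded) power of $p$ dividing $n+a$, which is legitimate since $a$ is fixed throughout — the form $Q$ has no fixed prime divisor, and its multiplicative root-counting function $\varrho_Q(n):=\#\{m\bmod n: Q(m)\equiv 0\bmod n\}$ satisfies $\varrho_Q(q^k)=1$ for all $q\ne p$ (as $p^\nu$ is a unit modulo $q^k$) and $\varrho_Q(p^k)=0$ (as $Q(m)\equiv a\not\equiv 0\bmod p$); thus $\varrho_Q=\mathbf 1_{(n,p)=1}$. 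I would also verify the range hypothesis of the short-interval form of Nair--Tenenbaum: from $p^\nu\le h^{1-\e}$ we get $H=h/p^\nu\ge h^\e\ge X^{\e^2}$, while $M\le 2X/p^\nu\le 2X$, so $H\ge M^{\e^2/2}$ once $X$ is large, and the theorem applies with parameters (and implied constant) depending only on $\e$.

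Nair--Tenenbaum then yields $\sum_{M<m\le M+H}g(Q(m))\ll_\e \frac{H}{\log M}\exp\bigl(\sum_{q\le M}\tfrac{g(q)\varrho_Q(q)}{q}\bigr)$, the prime-power tails $\sum_{k\ge 2}g(q^k)\varrho_Q(q^k)/q^k$ converging and contributing $O(1)$. Since $\sum_{q\le M}\tfrac{g(q)\varrho_Q(q)}{q}=\sum_{q\le M,\,q\ne p}\tfrac{g(q)}{q}=2\sum_{q\le M,\,q\equiv 1\,(4)}\tfrac1q+O(1)=\log\log M+O(1)$ by Mertens' theorem for primes in progressions modulo $4$, the exponential is $\asymp\log M$, and hence $\sum_{M<m\le M+H}g(Q(m))\ll_\e H$. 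Multiplying by $4$ gives the claimed bound $\ll_\e H=h/p^\nu$. The main obstacle — really the only delicate point — is the bookkeeping in the previous paragraph: ensuring the range condition $H\gg M^{\eta}$ holds uniformly (this is exactly where the hypothesis $p^\nu\le h^{1-\e}$ is used) and noticing that one must normalise to $g=\tfrac14 r$ so that the factor $\exp(\sum_{q\le M} g(q)/q)\asymp\log M$ cancels the $1/\log M$ coming from Nair--Tenenbaum precisely, leaving no residual logarithmic loss.
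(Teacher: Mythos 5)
Your proof is correct and reaches the same bound, but it routes through the Nair--Tenenbaum theorem applied to the linear form $Q(m)=p^\nu m+a$, whereas the paper simply shifts $m=n+a$ and invokes Shiu's Brun--Titchmarsh-type theorem for multiplicative functions, bounding the resulting short sum $\sum_{x+a<m\le x+h+a,\ m\equiv a\ (\mathrm{mod}\ p^\nu)} r'(m)$ directly. These two tools are close cousins: Shiu's theorem is essentially the degree-one case of Nair(--Tenenbaum), and both deliver the same Euler-product envelope $\exp\bigl(\sum_{q\le X}(r'(q)-1)/q\bigr)=O(1)$ that cancels the $1/\log$ factor, so the difference is more in the choice of black box than in the mechanism. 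The one substantive thing you do that the paper does not is explicitly handle the case $p\mid a$, which violates the coprimality hypothesis $(a,p^\nu)=1$ needed to apply Shiu's theorem as usually stated; the paper's three-line proof is silent on this point (it is easily repaired, as you indicate, by pulling out the bounded $p$-power dividing $n+a$ before applying the main estimate, since $a$ is fixed). Both approaches use the same key inputs: non-negativity of $r$ to relax $p^\nu\Vert n$ to $p^\nu\mid n$, the hypothesis $p^\nu\le h^{1-\e}$ to keep the effective length $h/p^\nu$ a positive power of the endpoint so that the range condition for the Brun--Titchmarsh/Nair--Tenenbaum machinery is met, and the Mertens-type evaluation $\sum_{q\le y} r'(q)/q=\log\log y+O(1)$. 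Your write-up is slightly longer but the more careful of the two.
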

\begin{proof}
Write $r'(n) = r(n)/4$, which is multiplicative and divisor-bounded. By Shiu's theorem \cite{Shiu}, the sum in question is bounded above by
$$
4\sum_{\ss{x+ a < m \leq x+h+a \\ m \equiv a\pmod{p^{\nu}}}} r'(m) \ll_{\e} \frac{h}{p^{\nu}}\prod_{p' \leq X} \left(1+\frac{r'(p')-1}{p'}\right) \ll h/p^{\nu},
$$
as claimed.
\end{proof}

\begin{proof}[Proof of Proposition \ref{prop:smallrnCorr}]
Suppose $n \leq X$ satisfies $0 < |\lambda_f(n)| < X^{-\delta}$. 
Writing out the prime factorization of $n$ and using multiplicativity, we observe that
$$
X^{-\delta} > \prod_{p^k||n} |\lambda_f(p^k)| \geq \left(\min_{p^k||n} |\lambda_f(p^k)|\right)^{\omega(n)} > 0,
$$
where $\omega(n)$ denotes the number of distinct prime factors of $n$. Since $\omega(n) \leq C\frac{\log X}{\log\log X}$ for some $C > 0$ absolute, we deduce that if $n$ satisfies $0 < |\lambda_f(n)| < X^{-\delta}$ then there is a prime power $p^k || n$ for which 
$$
0 < |\lambda_f(p^k)| < \exp\left(-\delta \frac{\log X}{\omega(n)}\right) \leq (\log X)^{-\delta/C}.
$$
Set $c := \min\{\delta/C,1/5\}$, and call 
$$
\mc{P}_c := \{p^\nu \leq 2X : 0 < |\lambda_f(p^\nu)| < (\log X)^{-c}\}.
$$ 
Then it suffices to bound, for all but $O(X/(\log X)^{1/10})$ choices of $x \in [X,2X]$, the sum
$$
\sum_{\ss{ x < n \leq x + h \\ \exists p^\nu || n , p^\nu \in \mc{P}_c}} r(n+a).
$$
We split the set $\mc{P}_c$ into three ranges:
$$
\text{i) $p^\nu \leq h^{1-\e}$, ii) $h^{1-\e} < p^\nu \leq X^{1-\e}$ and iii) $X^{1-\e} < p^\nu \leq 2X$.}
$$
We immediately note that since $h < X^{1-\e}$ there is at most one multiple of any prime power $p^\nu$ from range iii) in any interval $(x,x+h]$, and since (by arguing as in the proof of Lemma \ref{lem:verySmallLambda}) there are $\ll X/(\log X)^{1+c'}$ such prime powers in $\mc{P}_c$, for some $c ' > 0$ depending on $c$, we may ignore the range $p^\nu > X^{1-\e}$ by excluding at most $O(X/(\log X)^{1+c'})$ intervals $[x,x+h]$. \\
Next, consider range i). Combining Lemmas \ref{lem:NairTenBd} and \ref{lem:verySmallLambda} we obtain that
$$
\sum_{\ss{x < n \leq x + h \\ \exists p^\nu \in \mc{P}_c \cap [2,h^{1-\e}] \\ p^\nu || n}} r(n+a) \ll \sum_{\ss{p^\nu \leq h^{1-\e} \\ p^\nu \in \mc{P}_c}} \sum_{\ss{x < n \leq x + h \\ p^\nu||n}} r(n+a) \ll_{\e} h\sum_{\ss{p^\nu \leq h^{1-\e} \\ 0 < |\lambda_f(p^\nu)| < (\log x)^{-c}}} \frac{1}{p^\nu} \ll_{\delta,f} \frac{h}{(\log\log X)^{1/5}}.
$$
Note that this holds for every $x \in [X,2X]$. \\
Finally, consider range ii). Observe that by the same argument as in Lemma \ref{lem:NairTenBd} (splitting into dyadic intervals $[X,2X]$ and $[2X,4X]$ and taking $h(y) = y$ for $y \in \{X,2X\}$),
$$
\sum_{\ss{X < n \leq 4X \\ p^\nu||n}} r(n+a) \ll \frac{X}{p^\nu}.
$$
Thus, on combining this with Lemma \ref{lem:verySmallLambda} we obtain
$$
\sum_{\ss{X < n \leq 4X \\ \exists p^\nu \in \mc{P}_c \cap (h^{1-\e} ,X^{1-\e}] \\ p^\nu||n}} r(n+a) \ll_{\e} \frac{X}{(\log \log X)^{1/5}},
$$
in light of the condition $h \geq X^{\e}$. 
By positivity, we deduce from this that
$$
\sum_{X < m \leq 2X} \sum_{\ss{m < n \leq m+h \\ \exists p^\nu \in (h^{1-\e}, X^{1-\e}] \cap \mc{P}_c \\ p^\nu||n }} r(n+a) \leq h \sum_{\ss{X < n \leq 2X+h \\ \exists p^\nu \in (h^{1-\e},X^{1-\e}] \cap \mc{P}_c \\  p^\nu||n}} r(n+a) \ll \frac{hX}{(\log \log X)^{1/5}}.
$$
By Markov's inequality, we thus obtain that for all but $O(X/(\log \log X)^{1/10})$ we have
$$
\sum_{\ss{m < n \leq m+h \\ \exists p^\nu \in (h^{1-\e}, X^{1-\e}] \cap \mc{P}_c \\ p^\nu||n}} r(n+a) \ll \frac{h}{(\log \log X)^{1/10}}.
$$
Combined with the bound for the primes $p^\nu \leq h^{1-\e}$ and $p^{\nu} > X^{1-\e}$, the claim follows.
\end{proof}

\subsection{Proof of Proposition \ref{prop:lowBdCorrel}} \label{subsec:rnAP}
To prove Proposition \ref{prop:lowBdCorrel} we sieve out short intervals $[x,x+h]$ by the sparse set of prime powers $p^\nu$ satisfying $\lambda_f(p^{\nu}) = 0$. To accomplish this we make use of an estimate for $r(n)$ in arithmetic progressions $ n\leq y$, $n \equiv a \pmod{q}$ with $q \leq y^{1/10}$, say. Asymptotic formulae of this kind, with power saving error term for the fairly wide range $q \leq y^{2/3-\e}$, were given by R.A. Smith \cite[Thm. 9]{RASm}. The following improvement of these results is due to Tolev \cite{Tol}.  
\begin{lem}[Tolev \cite{Tol}] \label{lem:SmithThm}
Let $y$ be large, $q \leq y^{2/3}$ be squarefree and let $a$ be a residue class modulo $q$ with $(a,q) = 1$. Then
$$
\sum_{\ss{n \leq y \\ n \equiv a \pmod{q}}} r(n) = \pi \prod_{p  | q} \left(1-\frac{\chi_{4}(p)}{p}\right) \frac{y}{q} + O\left(y^{\e}\left(q^{1/2} + y^{1/3}\right)\right),
$$
where $\chi_4$ is the non-principal character mod $4$.
\end{lem}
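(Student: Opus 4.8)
Since the result is Tolev's, the plan is to reconstruct his argument. I would begin from the convolution identity $\tfrac14 r = 1\ast\chi_4$, i.e. $r(n) = 4\sum_{d\mid n}\chi_4(d)$, so that, writing $n = de$,
$$
\sum_{\ss{n\le y\\ n\equiv a\ (q)}} r(n) = 4\sum_{\ss{de\le y\\ de\equiv a\ (q)}}\chi_4(d).
$$
Because $(a,q)=1$, only pairs with $(d,q)=(e,q)=1$ contribute, and then the congruence $de\equiv a\pmod q$ pins down $e\equiv a\bar d\pmod q$ (dually $d\equiv a\bar e\pmod q$). I would then apply the Dirichlet hyperbola method, splitting at $d\le U$, $e\le V$ with $UV=y$, and handle the two pieces separately.

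For the ``$e$-large'' piece, $\sum_{d\le U,\ (d,q)=1}\chi_4(d)\cdot\#\{e\le y/d:\ e\equiv a\bar d\ (q)\}$, I would write the inner count as $\tfrac{y}{dq}$ plus a mean-zero $q$-periodic fluctuation. The main terms assemble, upon extending the $d$-sum to infinity, into
$$
\frac{y}{q}\sum_{(d,q)=1}\frac{\chi_4(d)}{d} = \frac{y}{q}\,L(1,\chi_4)\prod_{p\mid q}\Bigl(1-\frac{\chi_4(p)}{p}\Bigr) = \frac{\pi y}{q}\prod_{p\mid q}\Bigl(1-\frac{\chi_4(p)}{p}\Bigr),
$$
the tail of the $d$-sum being $O(d(q)/U)$ by the elementary estimate $\sum_{d\le t,\ (d,q)=1}\chi_4(d)\ll d(q)$ (Möbius over $\gcd(d,q)$ plus the boundedness of partial sums of $\chi_4$) and partial summation. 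For the ``$d$-large'' piece, $\sum_{e\le V,\ (e,q)=1}\sum_{d\le y/e,\ d\equiv a\bar e\ (q)}\chi_4(d)$, the inner sum is a sum of the character $\chi_4$ over an arithmetic progression modulo $q$; a short case analysis on $\gcd(q,2)$ shows it telescopes to $O(1)$ over each period of length $\operatorname{lcm}(4,q)$ (when $\gcd(q,2)=1$ one gets a complete character sum mod $4$; when $2\mid q$ consecutive terms cancel in pairs), so this piece is $O(V)$.

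The main obstacle is the remaining error in the ``$e$-large'' piece, namely the twisted-divisor fluctuation sum $\sum_{d\le U,\ (d,q)=1}\chi_4(d)\,\psi$ of the $q$-periodic defect (equivalently $\sum_d\chi_4(d)\psi(y/(dq))$ after Fourier-expanding the periodic part): the trivial bound $O(U)$ only yields an overall error $\asymp (qy)^{1/2+\e}$, far from the claimed $y^\e(q^{1/2}+y^{1/3})$. To recover the exponent $\tfrac13$ one must exploit cancellation here, i.e. invoke the Voronoi summation formula for $\sum_d\chi_4(d)\psi(Y/d)$ (equivalently van der Corput / exponent-pair estimates for the underlying exponential sums $\sum_d\chi_4(d)e(\beta/d)$) — precisely the input producing the exponent $\tfrac13$ in the Dirichlet divisor problem. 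Optimising $UV=y$ against these estimates and against the $O(V)$ term then delivers the stated bound; note that for $q\le y^{2/3}$ one has $q^{1/2}\le y^{1/3}$, so it is genuinely the divisor-problem error that dominates.

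An alternative, more geometric route I would keep in mind — and essentially the one Tolev takes — is to read the left-hand side as a lattice-point count $\#\{(u,v)\in\Z^2:\ u^2+v^2\le y,\ u^2+v^2\equiv a\ (q)\}$, detect the congruence with additive characters modulo $q$, factor off the resulting Gauss sums, bound the ensuing error by the Gauss circle problem error $O(y^{1/3})$ together with Weil-type bounds for the complete exponential sums modulo $q$, and observe that the secondary main terms reassemble into the Euler product $\prod_{p\mid q}(1-\chi_4(p)/p)$. Either way the crux is the same square-root-barrier-beating estimate for a divisor-type error term, and everything else is bookkeeping.
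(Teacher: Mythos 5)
The paper does not prove this lemma; it is cited verbatim from Tolev \cite{Tol} (as an improvement of R.~A.~Smith's earlier theorem), so there is no in-paper argument against which to compare your reconstruction.

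That said, a few remarks on the reconstruction itself. The skeleton — the convolution $\tfrac14 r = 1*\chi_4$, the hyperbola split, the assembly of the main term via $L(1,\chi_4)\prod_{p\mid q}(1-\chi_4(p)/p)$, and the $O(V)$ bound for the $d$-large piece from complete cancellation of $\chi_4$ over residues in an arithmetic progression mod $q$ — is all sound. But in the $e$-large piece you underplay a genuine difficulty: the fractional-part error is $\psi\bigl((y/d - a\bar d)/q\bigr)$, and after Fourier-expanding $\psi$ the resulting exponential sums in $d$ carry a factor $e(-ha\bar d/q)$ in addition to $e(hy/(dq))$. This is not a plain divisor-problem sum amenable to Voronoi or van der Corput alone; the $\bar d$-twist makes the relevant complete sums modulo $q$ Kloosterman/Sali\'e type, and the uniformity in $q$ up to $y^{2/3}$ (and the very shape of the error $y^{\e}(q^{1/2}+y^{1/3})$) comes precisely from Weil-type square-root cancellation in those sums. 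Your "alternative, geometric" paragraph — lattice points in a disc, additive characters mod $q$, Gauss/Kloosterman sums, Gauss-circle error $O(y^{1/3})$ — is in fact much closer to what Smith and Tolev actually do, and I would not present it as a mere alternative: the hyperbola version as written leaves the crucial $\bar d$-twist unaddressed, and filling that gap essentially converts it into the lattice-point argument. Finally, a harmless but worth-noting point for the application in the paper: since one applies the lemma with $q$ dividing a product $P\le X^{\e}$, the term $q^{1/2}$ is always negligible, so one only ever uses the exponent $1/3$ from the circle problem.
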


\begin{proof}[Proof of Proposition \ref{prop:lowBdCorrel}]
Let $z := (\log X)^{1/2}$ and let 
$$
P = P(z) := \prod_{\ss{3 \leq p \leq z \\ \exists \nu \geq 1 \text{ with } \lambda_f(p^\nu) = 0}} p.
$$ 
By the prime number theorem, we have $P \leq \exp(2z) \ll_{\e} X^{\e}$.  \\
We may lower bound the sum in question by
\begin{align}\label{eq:lowBdforProp}
\geq \sum_{\ss{x < n \leq x + h \\ 2 \nmid n \\ \lambda_f(n) \neq 0}} r(n+a) \geq \sum_{\ss{x < n \leq x + h \\ 2 \nmid n \\ (n,P(z)) = 1}} r(n+a) - \sum_{\ss{x < n \leq x + h \\ \exists p > z, k \geq 1 \\ \lambda_f(p^k) = 0 \\ p^k||n }} r(n+a).
\end{align}
To estimate the first sum we use M\"{o}bius inversion together with Lemma \ref{lem:SmithThm}, getting
\begin{align*}
\sum_{\ss{x < n \leq x + h \\ 2 \nmid n \\ (n,P(z)) = 1}} r(n+a)
&= \sum_{\substack{d|P}} \mu(d) \sum_{\ss{x<n \leq x+ h \\ 2 \nmid n \\ d|n}} r(n+a) \\
&= \sum_{e|(a,P)} \mu(e) r'(e) \sum_{\substack{d'|P/e \\ (d',a/e) = 1}} \mu(d') \sum_{\substack{x/e < m \leq (x+h)/e \\ m \equiv 1 \pmod{2} \\ m \equiv 0 \pmod{d'}}} r(m+a/e)  \\ 
&= \frac{\pi h}{2} \sum_{e|(a,P)} \frac{\mu(e)r'(e)}{e} \sum_{\substack{d'|P/e \\ (d',a/e) = 1}} \frac{\mu(d')}{d'} \prod_{p|d'} \left(1-\frac{\chi_4(p)}{p}\right)  + O\left(X^{\e} \left(\sum_{d|P} d^{\tfrac 12} + X^{\tfrac 13+\e} \tau(P) \right)\right) \\
&= \frac{\pi h}{2} \prod_{p|(a,P)} \left(1-\frac{r'(p)}{p-1+\chi_4(p)/p}\right) \prod_{\substack{3 \leq p\leq z \\ \lambda_f(p) = 0}} \left(1-\frac{1+\chi_4(p)}{p} + \frac{\chi_4(p)}{p^2}\right) + O\left(X^{1/3+\e} \right).
\end{align*}
Note that as $P$ is odd, $2\nmid (a,P)$ and thus the products above are $> 0$. Furthermore, since $\sum_{p  : \lambda_f(p) = 0} \frac{1}{p} < \infty$, for $h > X^{1/3+2\e}$ the product over $3 \leq p \leq z$ with $\lambda_f(p) = 0$ converges, and thus we deduce the existence of a constant $c_{f,a} > 0$ such that
$$
\sum_{\ss{x < n \leq x+h \\ 2 \nmid n \\ (n,P(z)) = 1}} r(n+a) = (c_{f,a}+o(1)) h.
$$
Next, we upper bound the second sum in \eqref{eq:lowBdforProp} similarly as in Proposition \ref{prop:smallrnCorr}. For $p^k \leq h^{1-\e}$ we use Lemmas \ref{lem:verySmallLambda} and \ref{lem:NairTenBd} to get
$$
\sum_{\ss{x < n \leq x + h \\ \exists p > z, \nu \geq 1 \\ p^\nu \leq h^{1-\e} \\ \lambda_f(p^\nu) = 0 \\ p^\nu||n }} r(n+a) \ll h\sum_{\ss{z < p^\nu \leq h^{1-\e} \\ \lambda_f(p^\nu) = 0}} \frac{1}{p^\nu} \ll_f \frac{h}{(\log z)^{1/5}} \ll \frac{h}{(\log \log X)^{1/5}},
$$
and, if $x\in [X,2X]$ is chosen outside of a set of size $\ll X/(\log \log X)^{1/10}$ then we also have
$$
\sum_{\ss{x < n \leq x + h \\ \exists p > z, \nu \geq 1 \\ p^\nu > h^{1-\e} \\ \lambda_f(p^\nu) = 0,  p^\nu||n }} r(n+a) \ll_f \frac{h}{(\log \log X)^{1/10}}.
$$
Adding up the two contributions, we get that
$$
\sum_{\ss{x < n \leq x + h \\ \exists p > z, \nu \geq 1 \\ \lambda_f(p^\nu) = 0,  p^\nu||n }} r(n+a) \ll_f \frac{h}{(\log\log X)^{1/10}}.
$$
To summarize, we have shown the existence of a constant $c_f > 0$ such that if $h > X^{1/3+2\e}$ then
$$
\sum_{\ss{ x < n \leq x+ h \\ \lambda_f(n) \neq 0}} r(n+a) \geq (c_{f,a}+o(1)) h + O_f\left(\frac{h}{(\log\log X)^{1/10}} \right) \gg_f h
$$
for all but $O(X/(\log \log X)^{1/10})$ choices of $x \in [X,2X]$, and the claim follows.
\end{proof}

\bibliographystyle{plain}
\bibliography{FourierCoeffMF.bib}

\end{document}